\documentclass[11pt]{amsart}

\usepackage[colorlinks=true,
pdfstartview=FitV, linkcolor=cyan, citecolor=purple,
urlcolor=blue]{hyperref}
\usepackage{stmaryrd}
\usepackage{enumerate}
\usepackage[pdftex]{graphicx}
\usepackage{amsmath}
\usepackage[font=small,labelfont=bf, skip=0pt]{caption}
\usepackage{amssymb}
\usepackage{mathrsfs}
\usepackage{faktor}
\usepackage{amsthm}
\usepackage{relsize}
\usepackage{bbm}
\usepackage{color}
\usepackage{braket}
\usepackage[scale=0.8, top=2cm, bottom=2.5cm, left=2.5cm, right=2.5cm]{geometry}
\usepackage[pagewise]{lineno}
\usepackage[dvipsnames]{xcolor}
\usepackage{scalerel}

\newtheorem{thm}{Theorem}[section]

\newtheorem*{thm*}{Theorem}
\newtheorem{prop}[thm]{Proposition}
\newtheorem*{prop*}{Proposition}
\newtheorem{defn}[thm]{Definition}
\newtheorem{lem}[thm]{Lemma}

\newtheorem{que}[thm]{Question}
\newtheorem*{que*}{Question}
\newtheorem{example}[thm]{Example}

\newtheorem{conj}[thm]{Conjecture} 
\newtheorem{rem}[thm]{Remark}

\def\bb #1{\mathbb{#1}}
\def\bf #1{\mathbf{#1}}
\def\cal #1{\mathcal{#1}}

\def\frak #1{\mathfrak{#1}}
\def\rm #1{\mathrm{#1}}
\def\sf #1{\mathsf{#1}}
\def\wt #1{\widetilde{#1}}

\newcommand{\eee}{\varepsilon}

\numberwithin{equation}{section}

\title[Correlation for convex projective surfaces]{Correlation of the renormalized Hilbert length \\for convex projective surfaces}
 
\author[Dai]{Xian Dai}
\address{Mathematical institute of Heidelberg University, Heidelberg, Germany, 69117}
\email{xdai@mathi.uni-heidelberg.de}

\author[Martone]{Giuseppe Martone}
\address{Department of Mathematics, University of Michigan, Ann Arbor, MI 41809}
\email{martone@umich.edu}

\begin{document}
\maketitle

\begin{abstract} In this paper we focus on dynamical properties of (real) convex projective surfaces. Our main theorem provides an asymptotic formula for the number of free homotopy classes with roughly the same renormalized Hilbert length for two distinct convex real projective structures. The \emph{correlation number} in this asymptotic formula is characterized in terms of their Manhattan curve. We show that the correlation number is not uniformly bounded away from zero on the space of pairs of hyperbolic surfaces, answering a question of Schwartz and Sharp. In contrast, we provide examples of diverging sequences, defined via \emph{cubic rays}, along which the correlation number stays larger than a uniform strictly positive constant. In the last section, we extend the correlation theorem to Hitchin representations.
\end{abstract} 

\setcounter{tocdepth}{1}
\tableofcontents
\section{Introduction} \label{section introduction}
\thispagestyle{empty}

In this paper, we study the \emph{correlation} of length spectra of pairs of convex projective surfaces. We describe how length spectra of two convex projective structures $\rho_1$ and $\rho_2$ correlate on a closed connected orientable surface  $S$ of genus $g\geq 2$. More precisely, we investigate the asymptotic behavior of the number of closed curves whose $\rho_1$-Hilbert length and $\rho_2$-Hilbert length are roughly the same. This question was first considered by Schwartz and Sharp  in the context of hyperbolic surfaces in \cite{CorrelationHyperbolic} (see also \cite{Glor_altern}), and more generally for negatively curved metrics in \cite{DalBo, Poll_Sharp_Corr}.
The holonomy of a real convex projective structure is an example of {\em Hitchin representation} and
our correlation theorem holds in this more general setting.

We now discuss our results in greater detail.
A \emph{(marked real) convex projective structure} on the surface $S$ is described by a strictly convex set $\Omega_\rho$ in the real projective plane which admits a cocompact action by a discrete subgroup of $\rm{SL}(3,\bb R)$ isomorphic to $\Gamma=\pi_1(S)$. We denote by $\rho\colon\Gamma\to\rm{SL}(3,\bb R)$ the corresponding representation and by $X_\rho$ the surface $S$ equipped with the convex projective structure. Goldman \cite{Gold_convex} proved that the space of convex projective surfaces $\frak C(S)$ is an open cell of dimension $-8\chi(S)$. Hyperbolic structures on $S$ define, via the Klein model of the hyperbolic plane, a $-3\chi(S)$-dimensional subspace of $\frak C(S)$, called the {\em Fuchsian locus}, which is naturally identified with the Teichm{\"u}ller space $\cal T(S)$ of $S$. As customary, we will blur the distinction between $\mathcal T(S)$ and the Fuchsian locus.

Every convex projective structure $\rho$ induces a \emph{Hilbert length} $\ell^H_\rho$ for non-trivial conjugacy classes of group elements in $\Gamma$. Given $[\gamma]\in [\Gamma]$ a non-trivial conjugacy class, we define $\ell^H_\rho([\gamma])$ as the length of the unique closed geodesic with respect to the Hilbert metric on $X_\rho$ that corresponds to the free homotopy class of $[\gamma]$ on $S$. The \emph{(marked) Hilbert length spectrum} of $\rho$ is the function $\ell^H_\rho\colon [\Gamma] \to \bb R_{> 0}$. We investigate a slightly different notion of length.
Denoting the \emph{topological entropy} of (the Hilbert geodesic flow of) $\rho$ as
\[
h(\rho)=\limsup_{T\to\infty} \frac{1}{T}\log\#\{[\gamma]\in[\Gamma]\mid \ell_{\rho}^H([\gamma])\leq T\},
\]
we define the \emph{renormalized Hilbert length spectrum} of $\rho$ as $L^H_\rho=h(\rho)\ell^H_\rho$. 

Our first theorem concerns the correlation of the renormalized Hilbert length spectra of two different convex projective structures. 
\begin{thm}[Correlation Theorem for Convex Real Projective Structures]\label{thm:main} Fix a precision $\eee>0$. Consider two convex projective structures $\rho_1$ and $\rho_2$ on a surface $S$ with distinct renormalized Hilbert length spectra $L_{\rho_1}^{H}\neq L_{\rho_2}^{H}$. There exist constants $C=C(\eee, \rho_1,\rho_2)>0$ and $M=M(\rho_1,\rho_2) \in (0,1)$ such that 
\[
\# \Big\{[\gamma]\in[\Gamma]\, \Big\vert\, L_{\rho_1}^{H}([\gamma]) \in \big(x,x+h(\rho_1)\eee\big),\  L_{\rho_2}^{H}([\gamma]) \in \big(x, x+h(\rho_2)\eee\big)\Big\} \sim C \frac{e^{Mx}}{x^{3/2}}.
\]
where $f(x)\sim g(x)$ means $f(x)/g(x) \to 1$ as $x \to \infty$.
\end{thm}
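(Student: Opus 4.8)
The plan is to recast the counting problem through the thermodynamic formalism of the Hilbert geodesic flow and then to read off the asymptotics from a two–dimensional saddle–point (local limit) analysis. Write $h_i = h(\rho_i)$. Since $\rho_1$ is the holonomy of a convex projective structure it is Anosov, so its Hilbert geodesic flow is coded by a subshift of finite type $(\Sigma,\sigma)$ with a H\"older roof function. Using the H\"older orbit equivalence between the flows of $\rho_1$ and $\rho_2$, I would realize both Hilbert lengths as positive H\"older potentials $r_1,r_2\colon\Sigma\to\bb R_{>0}$ whose Birkhoff sums over the periodic orbit associated with $[\gamma]$ recover $\ell^H_{\rho_i}([\gamma])$. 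Passing to the renormalized potentials $f_i=h_i r_i$, the topological pressure satisfies $P(-f_i)=0$ and $L^H_{\rho_i}([\gamma])$ is the Birkhoff sum of $f_i$. The hypothesis $L^H_{\rho_1}\neq L^H_{\rho_2}$ translates into the statement that $f_1$ and $f_2$ are not cohomologous, which supplies the nondegeneracy exploited below.

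Next I would set up the two–variable pressure $\mathcal P(a,b)=P(-a f_1-b f_2)$, which is real–analytic and convex, and introduce the renormalized Manhattan curve $\cal C=\{(a,b)\colon \mathcal P(a,b)=0\}$. Because $P(-f_1)=P(-f_2)=0$, the curve $\cal C$ passes through $(1,0)$ and $(0,1)$. The width choices $h_i\eee$ in the statement are exactly those making the condition equivalent to $\ell^H_{\rho_i}([\gamma])\in(x/h_i,\,x/h_i+\eee)$, i.e. both Birkhoff sums of the $f_i$ lie in a window near $x$. Convexity of $\mathcal P$ forces $\cal C$ to lie on or below the chord $a+b=1$, and the non–cohomologous hypothesis makes this strict on the interior; hence $\cal C$ is strictly convex and meets the chord only at its endpoints. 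I would then let $(a^\ast,b^\ast)\in\cal C$ be the unique point with $\nabla\mathcal P(a^\ast,b^\ast)$ parallel to $(1,1)$ and set $M=a^\ast+b^\ast=\min_{\cal C}(a+b)$; the discussion above gives $M\in(0,1)$, and this is precisely the Manhattan–curve characterization promised in the abstract.

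To extract the asymptotics I would study the complex transfer operators $\mathcal L_{a+is,\,b+it}$ with weight $e^{-(a+is)f_1-(b+it)f_2}$. Ruelle–Perron–Frobenius theory provides, near the real axis, a simple leading eigenvalue with a spectral gap whose logarithm is $\mathcal P$, and identifies the relevant singularity of the associated two–variable dynamical zeta function at $(a^\ast,b^\ast)$. Writing the box count as a double contour integral and shifting the contours through the saddle $(a^\ast,b^\ast)$, the exponential factor $e^{Mx}$ comes out as the value of the integrand there. A second–order expansion then splits into two contributions: along $\cal C$ the integral behaves as in the prime geodesic theorem and yields a factor of order $x^{-1}$, while the transverse Gaussian integral, governed by the nondegenerate second derivative of $\mathcal P$ along $\cal C$ at $(a^\ast,b^\ast)$, contributes $x^{-1/2}$; together these give $x^{-3/2}$. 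The constant $C$ assembles the window widths, the leading eigendata (Gibbs measure and eigenfunctions) and the transverse Hessian at the saddle.

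The main obstacle is the justification of the contour shift. This requires Dolgopyat–type oscillatory estimates controlling $\|\mathcal L_{a+is,\,b+it}\|$ for large $|(s,t)|$, together with the absence of other zeros of the zeta function on the shifted contours. Both rest on a joint non–arithmeticity (weak mixing) property: no nontrivial real combination $c_1 f_1+c_2 f_2$ is cohomologous to a function taking values in a discrete subgroup of $\bb R$. Deriving this independence — and, in the same vein, the strict convexity of $\cal C$ that makes the transverse Hessian nondegenerate — from $L^H_{\rho_1}\neq L^H_{\rho_2}$ via the regularity of convex projective (Hitchin) length functions is the crux of the argument; once it is in place, the saddle–point bookkeeping and the identification of $C$ and $M$ with Manhattan–curve data become routine, if delicate.
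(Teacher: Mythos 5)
Your architecture is sound and, suitably executed, would prove the theorem, but it takes a genuinely different route from the paper. The paper does not run a two--variable saddle--point analysis: it reduces asymmetrically to the one--variable constrained orbit--counting theorem of Lalley and Sharp (Theorem \ref{prop:lalley}), taking the Hilbert geodesic flow of $\rho_1$ as the base flow and the single reparametrization function $f=f_{\rho_1}^{\rho_2}$ of Lemma \ref{lem: rep} as the constrained observable, with constraint ratio $a_0=h(\rho_1)/h(\rho_2)$. The substantive work in the paper is then to verify $a_0\in J(f)$, which is done via two pressure inequalities: the upper bound $\int f\,\rm{d}\mu_{-h(\rho_2)f}<h(\rho_1)/h(\rho_2)$ from $P(-h(\rho_2)f)=0$, and the lower bound $\int f\,\rm{d}\mu_{\Phi}>h(\rho_1)/h(\rho_2)$ from the rigidity $\mathbf{J}(\rho_1,\rho_2)\geq 1$ of \cite{PressureMetric-MainPaper}. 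Your symmetric Manhattan--curve formulation can bypass that rigidity input: strict convexity of $\cal C$ (which follows from $L^H_{\rho_1}\neq L^H_{\rho_2}$ via Liv\v{s}ic, as you note) plus Rolle's theorem applied between the two intercepts already yields the tangency point in the admissible range. The price is that you must re--derive the Lalley--Sharp local limit machinery rather than quote it; note also that Dolgopyat--type bounds for large $|(s,t)|$ are not what is needed for the leading--order asymptotic --- Lalley's renewal--theoretic argument only requires the spectral radius of $\mathcal L_{a+is,b+it}$ to drop off the real axis, which is exactly the non--lattice condition, plus compactness.

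That non--lattice condition is where your proposal has a genuine gap. You correctly identify joint non--arithmeticity --- no nontrivial $a_1\ell^H_{\rho_1}+a_2\ell^H_{\rho_2}$ taking values in a discrete subgroup of $\bb R$ --- as the crux, but you propose to derive it from ``the regularity of convex projective (Hitchin) length functions,'' and that mechanism does not work: H\"older regularity of the limit maps produces H\"older potentials but says nothing about lattice relations among their periods. The paper's Lemma \ref{thm:indep} proves independence by an algebraic argument of an entirely different flavor: one passes to the Zariski closure $\rm H$ of $(\rho_1\times\rho_2)(\Gamma)$ inside $\rm G_1\times\rm G_2$, uses Benoist's theorem on the density of the Jordan projection of a Zariski--dense subgroup to rule out $\rm H=\rm G_1\times\rm G_2$, and then applies Goursat's lemma together with the classification of automorphisms of the simple factors to conclude that $\rho_2$ would have to be conjugate to $\rho_1$ or $\rho_1^{\ast}$ --- contradicting $L^H_{\rho_1}\neq L^H_{\rho_2}$ by the Cooper--Delp/Kim rigidity. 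Without some such argument your contour shift is unjustified, and the failure is not cosmetic: were the periods jointly lattice--valued, the count would oscillate and the asserted asymptotic $Ce^{Mx}/x^{3/2}$ would be false. Separately, note that non--arithmeticity is strictly stronger than the non--cohomology of $f_1$ and $f_2$ that you extract from $L^H_{\rho_1}\neq L^H_{\rho_2}$; the latter gives strict convexity of $\cal C$ but not the spectral gap away from the real axis.
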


\begin{rem}
\begin{enumerate}
\item 
There is an involution on the space of convex projective structures, called the \emph{contragredient involution}, which sends $\rho$ to $\rho^*=(\rho^{-1})^t$. Cooper-Delp and Kim \cite{CD_rigidity,Kim_rigidity} show that $L_{\rho_1}^H=L_{\rho_2}^H$ if and only if $\rho_2$ is $\rho_1$ or $\rho_1^{*}$.
\item
It follows from the generalized prime geodesic theorem \cite{Margulis_Thesis, Pollicot_Zeta}, that for any fixed $\eee$, if $\rho_1$ and $\rho_2$ converge to convex projective structures with the same renormalized Hilbert length spectrum, then $M(\rho_1,\rho_2)$ converges to one and $C(\eee,\rho_1,\rho_2)$ diverges.
\end{enumerate}
\end{rem}

The topological entropy of any hyperbolic structure equals to one \cite{Huber_entropy} and the renormalized Hilbert length coincides with the Hilbert length. The entropy varies continuously on $\frak C(S)$ and Crampon \cite{Crampon_entropy} shows that it is strictly less than one away from the Fuchsian locus. Nie and Zhang \cite{Nie_entropy,Zhang_entropy} prove that the entropy can be arbitrarily close to zero. In Example \ref{ex:badcorrelation}, we show that there exist a Fuchsian representation $\rho_1$ and a representation $\rho_2\in\frak C(S)$ with topological entropy different from one such that
\[
\lim_{x\to\infty}\#\Big\{[\gamma]\in[\Gamma]\, \Big\vert\, \ell_{\rho_1}^{H}([\gamma]) \in \big(x,x+\eee\big),\  \ell_{\rho_2}^{H}([\gamma]) \in \big(x, x+\eee\big)\Big\}=0.
\]
In particular, for these representations the size of the set $\Big\{[\gamma]\in[\Gamma]\, \Big\vert\, \ell_{\rho_i}^{H}([\gamma])\in(x,x+\epsilon),\ i=1,2\Big\}$ does not grow exponentially. Renormalized length spectra are natural objects from a dynamical point of view (see Remark \ref{LengthComparisonPhenomenon} for a more detailed justification) and thus they play a key role in our discussion.

We refer to the exponent $M(\rho_1,\rho_2)$ from Theorem \ref{thm:main} as the \emph{correlation number} of $\rho_1$ and $\rho_2$. An important goal of this paper is to study the correlation number as we vary $\rho_1$ and $\rho_2$ in $\frak C(S)$. One interesting question asked in \cite{CorrelationHyperbolic} for the case of the Teichm{\"u}ller space is whether the correlation number $M(\rho_1,\rho_2)$ is uniformly bounded away from zero as its arguments range over all hyperbolic structures. We answer this question in the negative in Section \ref{sec:corrnum}. We prove the following.
\begin{thm}[Decay of Correlation Number]\label{thm:pinching}
There exist sequences $(\rho_n)_{n=1}^\infty$ and $(\eta_n)_{n=1}^\infty$ in the Teichm{\"u}ller space $\cal T(S)$ such that the correlation number satisfies
\[
\lim_{n\to\infty} M(\rho_n,\eta_n)=0.
\]
\end{thm}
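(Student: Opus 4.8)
The plan is to bound the correlation number from above by the topological entropy of a single convex combination of the two length functions, and then to drive that entropy to zero by pinching a filling pair of curves. Throughout I work on the Fuchsian locus, so every $\rho_n,\eta_n$ is an honest hyperbolic structure with $h(\rho_n)=h(\eta_n)=1$ and renormalized Hilbert length equal to hyperbolic length.

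\emph{Step 1: reduction to an entropy estimate.} I would first record the Manhattan-curve characterization of $M$. Writing $\cal C(\rho_1,\rho_2)$ for the Manhattan curve, i.e. the boundary of $\{(a,b)\in\bb R^2 : \sum_{[\gamma]}e^{-aL^H_{\rho_1}([\gamma])-bL^H_{\rho_2}([\gamma])}<\infty\}$, the correlation number is $M(\rho_1,\rho_2)=\min\{a+b:(a,b)\in\cal C(\rho_1,\rho_2)\}$. For a length function $f\colon[\Gamma]\to\bb R_{>0}$ write $h(f)=\limsup_{T\to\infty}\tfrac1T\log\#\{[\gamma] : f([\gamma])\le T\}$ for its critical exponent. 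Since $\sum_{[\gamma]}e^{-s(\lambda L^H_{\rho_1}+(1-\lambda)L^H_{\rho_2})([\gamma])}$ converges precisely for $s>h(\lambda L^H_{\rho_1}+(1-\lambda)L^H_{\rho_2})$, the point $s\cdot(\lambda,1-\lambda)$ with $s=h(\lambda L^H_{\rho_1}+(1-\lambda)L^H_{\rho_2})$ lies on $\cal C(\rho_1,\rho_2)$, and its coordinates sum to $s$. Hence
\[
M(\rho_1,\rho_2)\le h\big(\lambda L^H_{\rho_1}+(1-\lambda)L^H_{\rho_2}\big)\qquad\text{for every }\lambda\in[0,1].
\]
It therefore suffices to construct sequences along which the right-hand side, for the choice $\lambda=\tfrac12$, tends to $0$; together with $M\ge0$ this gives the theorem.

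\emph{Step 2: the pinching construction.} I would fix simple closed curves $\alpha,\beta$ that \emph{fill} $S$ and choose $\rho_n,\eta_n\in\cal T(S)$ with $\ell_{\rho_n}(\alpha)\to0$ and $\ell_{\eta_n}(\beta)\to0$, where $\ell$ denotes hyperbolic length. Because $\alpha$ and $\beta$ fill they intersect, so the collar estimate forces $\ell_{\eta_n}(\alpha)\to\infty$ while $\ell_{\rho_n}(\alpha)\to0$; in particular $L^H_{\rho_n}\neq L^H_{\eta_n}$ for large $n$, so $M(\rho_n,\eta_n)$ is defined and positive by Theorem \ref{thm:main}. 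The collar lemma gives
\[
\ell_{\rho_n}([\gamma])\ge i(\gamma,\alpha)\,w_n,\qquad \ell_{\eta_n}([\gamma])\ge i(\gamma,\beta)\,w_n',
\]
where $i(\cdot,\cdot)$ is the geometric intersection number and $w_n,w_n'\to\infty$ are the collar widths of the pinched geodesics. Setting $c_n=\min(w_n,w_n')\to\infty$ yields, for all $[\gamma]$, the combined bound $\ell_{\rho_n}([\gamma])+\ell_{\eta_n}([\gamma])\ge c_n\big(i(\gamma,\alpha)+i(\gamma,\beta)\big)$.

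\emph{Step 3: a uniform counting bound and conclusion.} The decisive point is that the intersection-number counting function grows exponentially with a rate independent of $n$. Since $\alpha\cup\beta$ fills, for a fixed auxiliary hyperbolic metric $X_0$ there is a constant $K$ with $\ell_{X_0}([\gamma])\le K\big(i(\gamma,\alpha)+i(\gamma,\beta)\big)+K$, because between consecutive crossings of $\alpha\cup\beta$ a geodesic arc stays in a compact complementary region and so has bounded length. With the prime geodesic theorem for $X_0$ this gives $\#\{[\gamma] : i(\gamma,\alpha)+i(\gamma,\beta)\le R\}\le A\,e^{KR}$ for some $A$ and all $R$. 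Feeding in Step 2, $\#\{[\gamma]:\ell_{\rho_n}([\gamma])+\ell_{\eta_n}([\gamma])\le T\}\le A\,e^{KT/c_n}$, whence $h(\ell_{\rho_n}+\ell_{\eta_n})\le K/c_n$. Taking $\lambda=\tfrac12$ in Step 1 and using $h(\tfrac12 f)=2h(f)$,
\[
0<M(\rho_n,\eta_n)\le h\big(\tfrac12\ell_{\rho_n}+\tfrac12\ell_{\eta_n}\big)=2\,h\big(\ell_{\rho_n}+\ell_{\eta_n}\big)\le \frac{2K}{c_n}\xrightarrow[n\to\infty]{}0 .
\]
The main obstacle I expect is this last step: making precise that the combined intersection number controls length with a rate that does not deteriorate along the degenerating sequence. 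This is exactly where the filling hypothesis is essential — it forces every conjugacy class to cross at least one of the pinched collars, so the growing widths $c_n$ uniformly inflate all lengths, while the filling comparison to the fixed metric $X_0$ keeps the exponential counting rate $K$ bounded. Had we pinched the same curve in both surfaces, or a non-filling pair, infinitely many classes would stay short in the combined length and the entropy would not vanish.
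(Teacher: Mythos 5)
Your argument is correct, and while it opens and closes exactly as the paper's proof does, the decisive middle step is genuinely different. Like the paper, you reduce everything to the Manhattan-curve characterization of $M$ (Theorem \ref{thm:Manhattan}): the point $\big(h(\ell_{\rho_n}+\ell_{\eta_n}),h(\ell_{\rho_n}+\ell_{\eta_n})\big)$ lies on $\cal C(\rho_n,\eta_n)$, and convexity gives $M(\rho_n,\eta_n)\le 2h(\ell_{\rho_n}+\ell_{\eta_n})$; your ``minimum of $a+b$ over the curve'' phrasing is just a clean repackaging of the paper's tangent-line/mean-value argument, and your critical-exponent justification that $s\cdot(\lambda,1-\lambda)$ lies on the curve is the Dirichlet-series counterpart of the paper's use of Lemma \ref{lem:PressueZero}. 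Where you diverge is in showing $h(\ell_{\rho_n}+\ell_{\eta_n})\to 0$. The paper passes through geodesic currents: it forms $\nu_{\rho_n}+\nu_{\eta_n}$, shows its systole tends to infinity via the collar lemma and the filling hypothesis, and then invokes the systole--entropy inequality $\rm{Sys}(\nu)h(\nu)\le C$ (Theorem \ref{thm:entropysystole}, imported from \cite{MZ_currents}). You instead give a self-contained counting bound: the filling pair $\alpha\cup\beta$ cuts $S$ into disks, so a fixed reference metric satisfies $\ell_{X_0}([\gamma])\le K\big(i(\gamma,\alpha)+i(\gamma,\beta)\big)$ (each arc between consecutive crossings lifts into a compact simply connected region and has length at most its diameter), whence the prime geodesic theorem on $X_0$ gives $\#\{i(\gamma,\alpha)+i(\gamma,\beta)\le R\}\le Ae^{KR}$ with $K$ independent of $n$, and the collar widths then force $h(\ell_{\rho_n}+\ell_{\eta_n})\le K/c_n$. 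Your route avoids geodesic currents and the external systole--entropy inequality entirely, at the cost of being tied to this specific pinching construction; the paper's argument is softer and applies to any sequence of currents whose sum has diverging systole. The remaining differences (a filling pair of simple closed curves versus filling pants decompositions; the explicit remark that $\ell_{\eta_n}(\alpha)\to\infty$ guarantees distinct length spectra so that $M$ is defined) are inessential, and both are handled correctly.
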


The sequences in Theorem \ref{thm:pinching} are given by pinching a hyperbolic structure along two different pants decompositions which are filling. Intuitively, these are two families of hyperbolic structures diverging from each other in the Teichm{\"u}ller space thus suggesting a small correlation number when going to infinity. A key to prove Theorem \ref{thm:pinching} is a characterization of Sharp \cite{Sharp_Manhattan} of the correlation number in terms of the Manhattan curve \cite{Burger_Manhattan}. In Theorem \ref{thm:Manhattan}, we extend Sharp's characterization of the correlation number to the case of two convex projective structures.

In Section \ref{sec:applications}, we study explicit examples of correlation number in the space of convex projective structures. In contrast to Theorem \ref{thm:pinching}, we provide pairs of diverging sequences for which the correlation numbers are uniformly bounded below away from zero. These sequences $(\rho_t)_{t\geq 0}$, called {\em cubic rays}, are defined using holomorphic cubic differentials via Labourie-Loftin's parameterization of $\frak C(S)$ \cite{Labourie_FlatProjective,Loftin_AffineSpheres}.  More precisely, Labourie and Loftin describe a mapping class group equivariant homeomorphism between $\frak C(S)$ and the vector bundle of holomorphic cubic differentials over $\cal T(S)$. The sequences $(\rho_t)_{t \geq 0}$ lie in fibers of $\frak C(S)$ with base point $\rho_0\in \cal T(S)$ and correspond to rays $(tq)_{t\geq 0}$ for $q$ a fixed $\rho_0$-holomorphic cubic differential. We will recall Labourie-Loftin's parameterization of $\frak C(S)$ and the definition of cubic rays more precisely in section \ref{sec:applications}.

Using work of Tholozan \cite{Tho_entropy}, we show in Lemma \ref{lem:tholo} that, for a cubic ray $(\rho_t)_{t\geq 0}$, the renormalized Hilbert length of $\rho_t$ is bi-Lipschitz to the one of $\rho_0$ with Lipschitz constants independent of $t$. We deduce that the correlation number of any two convex projective structures in different fibers is uniformly bounded from below by the correlation number of its base hyperbolic structures.

\begin{thm}\label{thm:differentfibers} Let $(\rho_t)_{t\geq 0}$, $(\eta_r)_{r\geq 0}$ be two cubic rays associated to two different hyperbolic structures $\rho_0\neq\eta_0$. Then, there exists a constant $C>0$ such that for all $t,r \geq 0$, 
\[
M(\rho_t,\eta_r)\geq CM(\rho_0,\eta_0).
\]
\end{thm}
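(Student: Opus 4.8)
The plan is to combine the Manhattan curve description of the correlation number supplied by Theorem \ref{thm:Manhattan} with the uniform bi-Lipschitz comparison of renormalized Hilbert lengths along cubic rays supplied by Lemma \ref{lem:tholo}. Recall that for two renormalized length spectra $L_{\rho_1}^H,L_{\rho_2}^H$, each of entropy one, the Manhattan curve $\cal C(\rho_1,\rho_2)$ is the boundary of the convex convergence region
\[
R(\rho_1,\rho_2)=\Big\{(a,b)\in\bb R^2 : \sum_{[\gamma]\in[\Gamma]} e^{-aL_{\rho_1}^H([\gamma])-bL_{\rho_2}^H([\gamma])}<\infty\Big\},
\]
and that Theorem \ref{thm:Manhattan} identifies $M(\rho_1,\rho_2)=\min\{a+b:(a,b)\in\cal C(\rho_1,\rho_2)\}$, the minimum being attained at the unique point of the curve whose tangent has slope $-1$. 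Since the curve is convex and passes through $(1,0)$ and $(0,1)$, this extremal point lies in the closed first quadrant; this is the feature that will let a pointwise comparison of the length functions interact cleanly with the defining series.

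First I would extract from Lemma \ref{lem:tholo} a single constant $K\geq 1$, independent of $t$ and $r$, with $K^{-1}L_{\rho_0}^H\leq L_{\rho_t}^H\leq K L_{\rho_0}^H$ and $K^{-1}L_{\eta_0}^H\leq L_{\eta_r}^H\leq K L_{\eta_0}^H$ pointwise on $[\Gamma]$ (taking $K$ to be the larger of the two bi-Lipschitz constants). For $a,b\geq 0$ these combine into the termwise estimate
\[
K^{-1}\big(aL_{\rho_0}^H([\gamma])+bL_{\eta_0}^H([\gamma])\big)\leq aL_{\rho_t}^H([\gamma])+bL_{\eta_r}^H([\gamma])\leq K\big(aL_{\rho_0}^H([\gamma])+bL_{\eta_0}^H([\gamma])\big).
\]
Next I would transfer this to the series level. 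Using the right-hand inequality, for $a,b\geq 0$,
\[
\sum_{[\gamma]} e^{-aL_{\rho_t}^H([\gamma])-bL_{\eta_r}^H([\gamma])}\ \geq\ \sum_{[\gamma]} e^{-(Ka)L_{\rho_0}^H([\gamma])-(Kb)L_{\eta_0}^H([\gamma])},
\]
so if $(a,b)\in R(\rho_t,\eta_r)$ then $(Ka,Kb)\in R(\rho_0,\eta_0)$; equivalently, in the first quadrant $R(\rho_t,\eta_r)\subseteq K^{-1}R(\rho_0,\eta_0)$. Passing to closures and minimizing the linear functional $a+b$ over the two nested convex regions, and using that the slope-$(-1)$ point of $\cal C(\rho_0,\eta_0)$, hence of its rescaling $K^{-1}\cal C(\rho_0,\eta_0)$, sits in the first quadrant, I obtain
\[
M(\rho_t,\eta_r)=\min_{\cal C(\rho_t,\eta_r)}(a+b)\ \geq\ \min_{K^{-1}\cal C(\rho_0,\eta_0)}(a+b)=K^{-1}M(\rho_0,\eta_0),
\]
so the theorem follows with $C=K^{-1}$.

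The hard part will not be the analytic inputs—these are exactly Lemma \ref{lem:tholo} and Theorem \ref{thm:Manhattan}—but the convex-geometric bookkeeping of the last step: the termwise inequalities only hold for $a,b\geq 0$, whereas the correlation number is a priori a minimum over the entire Manhattan curve. The key point to make rigorous is that convexity forces the relevant extremal (slope $-1$) point into the first quadrant, where the comparison applies, so that the constrained and unconstrained minima agree. Verifying that the extremal points of the rescaled curves do not escape the quadrant, and that the boundary/closure ambiguities for the convergence regions are harmless, is the only delicate portion of the argument.
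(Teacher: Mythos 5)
Your argument is correct, and it rests on the same two pillars as the paper's proof---the uniform bi-Lipschitz control of Lemma \ref{lem:tholo} and the Manhattan-curve characterization of Theorem \ref{thm:Manhattan}---but the way you compare the two Manhattan curves is genuinely different in its packaging. The paper routes the comparison through geodesic currents: it writes $M(\rho_t,\eta_r)=h(u_{s_0})$ for a convex combination $u_{s_0}=s_0\upsilon_{\rho_t}+(1-s_0)\upsilon_{\eta_r}$ of renormalized Hilbert currents (Lemma \ref{lem:ManhattanEntropy}), bounds the exponential growth rate $h(u_s)\geq C\,h(w_s)$ using the termwise length comparison, and then shows $h(w_s)\geq M(\rho_0,\eta_0)$ by locating the point $\big(s\,h(w_s),(1-s)h(w_s)\big)$ on the Manhattan curve. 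You instead work directly with the convergence region defining the Manhattan curve and deduce the nesting $K\cdot\big(R(\rho_t,\eta_r)\cap\{a,b\geq0\}\big)\subseteq R(\rho_0,\eta_0)$ before minimizing $a+b$. These are two readings of the same computation---your region nesting is precisely the inequality $h(u_s)\geq K^{-1}h(w_s)$ viewed radially---but yours avoids geodesic currents entirely and makes the convexity bookkeeping explicit; what it gives up is the reusable Lemma \ref{lem:ManhattanEntropy}, which the paper also needs for Theorem \ref{thm:cubicrays}. The points you flag as delicate do check out: the renormalized curve is the graph of a convex decreasing function joining $(0,1)$ to $(1,0)$, so the slope-$(-1)$ point lies in the open first quadrant and the constrained and global minima of $a+b$ agree, and the closure issue is handled by perturbing $(a,b)$ by $\eee>0$ before rescaling. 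Two small items to add: a sentence on why $L^H_{\rho_t}\neq L^H_{\eta_r}$ (the contragredient involution preserves fibers of the Labourie--Loftin bundle, acting by $q\mapsto -q$, so representations over distinct base points are neither equal nor contragredient, and the correlation number is defined); and, exactly as in the paper's own proof, Lemma \ref{lem:tholo} only supplies the constant for $t\geq t_0$, so the range $t\in[0,t_0]$ needs a separate compactness remark to get a single $K$ valid for all $t,r\geq 0$.
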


A similar statement holds for most pairs of cubic rays with the same base hyperbolic structure. 

\begin{thm}\label{thm:cubicrays}  Let $\rho_t$ and $\eta_t$ be two cubic rays associated to two different holomorphic cubic differential $q_1$ and $q_2$ on a hyperbolic structure $X_0$ such that $q_1,q_2$ have unit $L^2$-norm with respect to $X_0$ and $q_1\neq -q_2$. Then there exists a constant $C>0$ such that for all $t>0$,
\[
M(\rho_{t},\eta_{t})\geq C.
\]
\end{thm}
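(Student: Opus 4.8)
The plan is to deduce the uniform lower bound from a convexity estimate for the Manhattan curve, the point being that along a cubic ray the renormalized Hilbert length stays uniformly comparable to the renormalized length of the base hyperbolic structure $X_0$. I would fix a symbolic coding of the geodesic flow valid for all the structures at once, and let $f_{\rho_t}, f_{\eta_t}, f_{X_0}$ denote the (H\"older) reparametrization functions whose periods over a closed orbit recover $L^H_{\rho_t}, L^H_{\eta_t}, L^H_{X_0}$ respectively. By Theorem \ref{thm:Manhattan}, the correlation number is the minimal value of $a+b$ as $(a,b)$ ranges over the Manhattan curve
\[
\mathcal{C}_t = \{(a,b) : P(-a f_{\rho_t} - b f_{\eta_t}) = 0\},
\]
a convex curve which passes through $(1,0)$ and $(0,1)$ (because both renormalized spectra have entropy one) and whose slope at the point with equilibrium state $\mu$ equals $-\int f_{\rho_t}\,d\mu / \int f_{\eta_t}\,d\mu$. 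The hypothesis $q_1\neq -q_2$ (together with $q_1 \neq q_2$) guarantees, via Labourie--Loftin's bijection \cite{Labourie_FlatProjective,Loftin_AffineSpheres} and the contragredient characterization of Cooper--Delp and Kim \cite{CD_rigidity,Kim_rigidity}, that $\rho_t \notin \{\eta_t, \eta_t^*\}$, so that $L^H_{\rho_t}\neq L^H_{\eta_t}$ and we are in the nondegenerate regime where $M(\rho_t,\eta_t) \in (0,1)$.

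Next I would convert Lemma \ref{lem:tholo} into a uniform slope bound. That lemma furnishes a constant $A \geq 1$, independent of $t$, with $A^{-1} L^H_{X_0} \leq L^H_{\rho_t}, L^H_{\eta_t} \leq A\, L^H_{X_0}$ on every conjugacy class (taking $A$ to be the larger of the two bi-Lipschitz constants associated to $q_1$ and $q_2$). Since periodic-orbit measures are weak-$*$ dense among flow-invariant probability measures, this comparison on conjugacy classes upgrades to $A^{-1}\int f_{X_0}\,d\mu \leq \int f_{\rho_t}\,d\mu, \int f_{\eta_t}\,d\mu \leq A\int f_{X_0}\,d\mu$ for every such $\mu$. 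Consequently $\int f_{\rho_t}\,d\mu / \int f_{\eta_t}\,d\mu \in [A^{-2}, A^2]$, so the slope of $\mathcal{C}_t$ lies in $[-A^2, -A^{-2}]$ at every point, uniformly in $t$.

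Finally I would run the convexity argument. Writing $\mathcal{C}_t$ as a graph $b = \beta_t(a)$, the function $\beta_t$ is convex, satisfies $\beta_t(0)=1$ and $\beta_t(1)=0$, and has $\beta_t' \in [-A^2, -A^{-2}]$. Comparing the maximal-entropy measures at $(1,0)$ and $(0,1)$ shows that the slope is $\geq -1$ at $(1,0)$ and $\leq -1$ at $(0,1)$, so the minimizer $a^*$ of $a + \beta_t(a)$ (where $\beta_t' = -1$) lies in $[0,1]$. Integrating the slope bounds from the two endpoints gives $\beta_t(a) \geq \max\{A^{-2}(1-a),\, 1 - A^2 a\}$, whose associated lower bound for $a + \beta_t(a)$ is minimized at $a = (A^2+1)^{-1}$ with value $2(A^2+1)^{-1}$. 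Hence
\[
M(\rho_t, \eta_t) = a^* + \beta_t(a^*) \geq \frac{2}{A^2+1} =: C > 0
\]
for all $t$, as desired. The main obstacle is not this soft convexity computation but the uniformity feeding it: as $t\to\infty$ the structures $\rho_t, \eta_t$ leave every compact subset of $\frak C(S)$ and their entropies degenerate, so the slope bound can be $t$-independent only because Lemma \ref{lem:tholo} controls the renormalized lengths with $t$-independent constants. The technical point to check with care is the passage from the bi-Lipschitz comparison on conjugacy classes to the comparison of the integrals $\int f\,d\mu$ over all invariant measures, which is where the thermodynamic formalism enters.
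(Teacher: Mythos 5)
Your proposal is correct, and it shares the same skeleton as the paper's proof (Theorem \ref{thm:Manhattan} plus the uniform bi-Lipschitz comparison of Lemma \ref{lem:tholo}), but the implementation is genuinely different. The paper interpolates between the two \emph{renormalized Hilbert geodesic currents}, setting $u_{t,s}=s\upsilon_{\rho_t}+(1-s)\upsilon_{\eta_t}$, observes via Lemma \ref{lem:PressueZero} that every first-quadrant point of the Manhattan curve has the form $(h(u_{t,s})sh(\rho_t),h(u_{t,s})(1-s)h(\eta_t))$, so that $M(\rho_t,\eta_t)=h(u_{t,s_0})$ for some $s_0$ (Lemma \ref{lem:ManhattanEntropy}), and then bounds $h(u_{t,s})\geq 1/D$ uniformly by a pure counting argument: $sL^H_{\rho_t}+(1-s)L^H_{\eta_t}\leq D L^H_0$ and $L^H_0$ has exponential growth rate one. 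You instead convert Lemma \ref{lem:tholo} into a uniform bound $\beta_t'\in[-A^2,-A^{-2}]$ on the slope of the renormalized Manhattan curve and minimize $a+\beta_t(a)$ explicitly; note that $a+b$ at the point with parameter $s$ is exactly $h(u_{t,s})$, so the two arguments are bounding the same quantity from below by different means. Your route requires one extra ingredient the paper never invokes, namely Sigmund's density of normalized periodic-orbit measures among invariant probability measures of the (Anosov) base flow, in order to upgrade the period comparison on conjugacy classes to the comparison of $\int f\,d\mu$ over all equilibrium states; this is standard and correct, but it is precisely the step the paper's counting argument sidesteps, which is why the paper's version is the softer one. (Your endpoint-slope claim, that the tangent slope is $\geq -1$ at $(1,0)$ and $\leq -1$ at $(0,1)$, is exactly Theorem \ref{PropertyManhattan}(2) combined with Proposition \ref{thm:Jrigidity}, so the minimizer does lie in $[0,1]$ as you assert.) One last point common to both arguments: Lemma \ref{lem:tholo} only supplies the constant for $t\geq t_0$, so, like the paper, you really prove uniform boundedness for $t$ large; the range $t\in(0,t_0]$ is handled by continuity and positivity of the correlation number, which neither proof spells out.
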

 
In Section \ref{ssec:systole}, which is for the most part independent of the rest of the paper, we show how Lemma \ref{lem:tholo} can be used to study renormalized Hilbert geodesic currents $\nu_{\rho_t}$ along a cubic ray $(\rho_t)_{t>0}$. Geodesic currents are geometric measures on the space of complete geodesics on the universal cover of $S$ and each geodesic current $\nu$ has a corresponding length spectrum $\ell_\nu\colon[\Gamma]\to \bb R_{\geq 0}$ with {\em systole} $\rm{Sys}(\nu)=\inf_{c\in[\Gamma]}\ell_\nu(c)$. See \S\ref{ssec:currents} for a detailed discussion.
It follows from \cite{Bon_currents, BCLS_currents, MZ_currents} that for every convex projective structure $\rho$, there exists a unique geodesic current $\upsilon_\rho$ whose length spectrum coincides with the renormalized Hilbert length spectrum of $\rho$. We call $\upsilon_{\rho}$ the {\em renormalized Hilbert geodesic current} (also known as the \emph{renormalized Liouville current}) of $\rho$. We prove the following.

\begin{thm}\label{prop:degen} As $t$ goes to infinity, the renormalized Hilbert geodesic current $(\upsilon_{\rho_t})_{t\in>0}$ along a cubic ray $(\rho_t)_{t\geq 0}$ converges, up to subsequences, to a geodesic current $\upsilon$ with $\rm{Sys}(\upsilon)>0$.
\end{thm}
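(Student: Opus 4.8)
The plan is to split the statement into two parts. First, I would show that the family $(\upsilon_{\rho_t})_{t\geq 0}$ is precompact in the space of geodesic currents $\cal C(S)$, so that convergent subsequences exist; second, I would show that any subsequential limit $\upsilon$ satisfies $\rm{Sys}(\upsilon)>0$. Both parts rest on Lemma \ref{lem:tholo}, which supplies a constant $C\geq 1$, independent of $t$, with
\[
\frac{1}{C}\,\ell_{\upsilon_{\rho_0}}(c)\ \leq\ \ell_{\upsilon_{\rho_t}}(c)\ \leq\ C\,\ell_{\upsilon_{\rho_0}}(c)\qquad\text{for every }c\in[\Gamma]\text{ and all }t\geq 0,
\]
together with the standard properties of the intersection form $i(\cdot,\cdot)$ on $\cal C(S)$ from \cite{Bon_currents}: it is continuous, weighted closed geodesics are dense, and $\ell_\nu(c)=i(\nu,\delta_c)$. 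Here $\upsilon_{\rho_0}$ is the renormalized Hilbert geodesic current of the base hyperbolic structure $X_0$; since $\rho_0$ is Fuchsian it is a positive multiple of the Liouville current of $X_0$, hence filling, with finite self-intersection $i(\upsilon_{\rho_0},\upsilon_{\rho_0})<\infty$ and positive systole $\rm{Sys}(\upsilon_{\rho_0})>0$ (the latter because $X_0$ is a closed hyperbolic surface).

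For precompactness I would invoke Bonahon's criterion: since $\upsilon_{\rho_0}$ is filling, the sublevel sets $\{\nu\in\cal C(S)\mid i(\nu,\upsilon_{\rho_0})\leq R\}$ are compact, so it suffices to bound $i(\upsilon_{\rho_t},\upsilon_{\rho_0})$ uniformly in $t$. The mechanism is to promote the per-curve upper bound above to the diffuse pairing by approximation. Writing $\upsilon_{\rho_0}=\lim_n\mu_n$ with $\mu_n=\sum_i a^{(n)}_i\,\delta_{c^{(n)}_i}$ a weighted sum of closed geodesics, the termwise upper bound gives
\[
i(\upsilon_{\rho_t},\mu_n)=\sum_i a^{(n)}_i\,\ell_{\upsilon_{\rho_t}}(c^{(n)}_i)\ \leq\ C\sum_i a^{(n)}_i\,\ell_{\upsilon_{\rho_0}}(c^{(n)}_i)=C\,i(\upsilon_{\rho_0},\mu_n),
\]
and letting $n\to\infty$ with continuity of $i$ yields $i(\upsilon_{\rho_t},\upsilon_{\rho_0})\leq C\,i(\upsilon_{\rho_0},\upsilon_{\rho_0})$, a bound independent of $t$. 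Hence $(\upsilon_{\rho_t})_{t\geq 0}$ is precompact and we may extract a convergent subsequence $\upsilon_{\rho_{t_n}}\to\upsilon$.

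For the positive systole I would use the lower bound in Lemma \ref{lem:tholo}: for every $c\in[\Gamma]$ and all $t$,
\[
\ell_{\upsilon_{\rho_t}}(c)\ \geq\ \tfrac1C\,\ell_{\upsilon_{\rho_0}}(c)\ \geq\ \tfrac1C\,\rm{Sys}(\upsilon_{\rho_0})=:s_0>0.
\]
Since the length of a fixed closed geodesic is continuous on $\cal C(S)$, for each $c$ we obtain $\ell_\upsilon(c)=\lim_n\ell_{\upsilon_{\rho_{t_n}}}(c)\geq s_0$; taking the infimum over $c\in[\Gamma]$ gives $\rm{Sys}(\upsilon)\geq s_0>0$, as desired.

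The main obstacle, and the only step requiring genuine care, is converting the pointwise bi-Lipschitz comparison of \emph{length spectra} from Lemma \ref{lem:tholo} into a bound on the \emph{intersection} of $\upsilon_{\rho_t}$ with the diffuse reference current $\upsilon_{\rho_0}$; the density of weighted closed geodesics together with the continuity of Bonahon's intersection form is precisely what bridges this gap. One should also verify the two soft hypotheses used above, namely that $\upsilon_{\rho_0}$ is filling with finite self-intersection and that the per-curve functional $\nu\mapsto\ell_\nu(c)$ is continuous, both of which are standard for Liouville-type currents. In particular, although the limit $\upsilon$ could a priori acquire atoms, the continuity of $\nu\mapsto i(\nu,\delta_c)$ for a fixed closed geodesic $c$ is unaffected, so the systole estimate goes through unchanged.
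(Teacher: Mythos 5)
Your proof is correct and follows the same overall strategy as the paper: precompactness via a uniform bound on the intersection with a fixed filling current (using Lemma \ref{lem:tholo} and Bonahon's compactness criterion), followed by the lower bi-Lipschitz bound to keep the systole of the limit away from zero. The differences are in the execution. For precompactness the paper takes as reference current a \emph{finite} sum of Dirac currents $u=\sum\delta_\gamma$ over the union of two filling pants decompositions, so the bound $i(\upsilon_{\rho_t},u)=\sum_\gamma L^H_t([\gamma])\leq (6g-6)DM$ is immediate from Lemma \ref{lem:tholo} and linearity, with no approximation needed; you instead pair against the diffuse current $\upsilon_{\rho_0}$ and must promote the per-curve inequality to $i(\upsilon_{\rho_t},\upsilon_{\rho_0})\leq C\,i(\upsilon_{\rho_0},\upsilon_{\rho_0})$ via density of weighted closed geodesics and continuity of $i$ --- this works (it is the same mechanism behind Corollary 5.2 of \cite{BCLS_currents}, which the paper uses elsewhere), but the paper's choice of reference current makes that step unnecessary. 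For the systole, the comparison runs the other way: the paper invokes the continuity of $\rm{Sys}\colon\cal C(S)\to[0,\infty)$ (Corollary 1.5 of \cite{BIPP_compact}), a genuinely nontrivial result since an infimum of continuous functions is in general only upper semicontinuous, whereas your argument --- pass to the limit in $\ell_{\upsilon_{\rho_{t_n}}}(c)\geq s_0$ for each fixed $c$ using continuity of $\nu\mapsto i(\nu,\delta_c)$, then take the infimum --- needs only the elementary fact that pointwise limits preserve a uniform lower bound, and is arguably the more self-contained route for this half of the statement.
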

This should be compared with what happens for hyperbolic structures: given a sequence of hyperbolic structures which leaves every compact subset of $\mathcal T(S)$, up to rescaling and passing to subsequences, the corresponding sequence of geodesic currents converges to a geodesic current with vanishing systole. Burger, Iozzi, Parreau, and Pozzetti \cite{BIPP_compact} show that this fact no longer holds for general sequences of Hilbert geodesic currents. Theorem \ref{prop:degen} provides new examples of diverging sequences of convex projective structures whose associated geodesic currents converge (projectively and up to subsequences) to a geodesic current with positive systole. In light of \cite[Theorem 1.13]{ALS_orbifolds}, Theorem \ref{prop:degen} extends a theorem of Burger, Iozzi, Parreau, Pozzetti \cite[Theorem 1.12]{BIPP_compact} who prove that the Hilbert geodesic currents of a diverging sequence of convex projective structures of a triangle group converge (projectively and up to subsequences) to a geodesic current with positive systole.

Finally, in Section \ref{sec:Hitchin}, we generalize the correlation theorem (Theorem \ref{thm:main}) to Hitchin components for a large class of length functions. We will replace the Hilbert geodesic flow of a convex projective structure, which is Anosov, with more general metric Anosov translation flows. 

 Given a hyperbolic structure $\rho\in\mathcal T(S)$, seen as a representation $\rho\colon\Gamma\to\rm{PSL}(2,\bb R)$, we obtain a representation $i\circ\rho\colon \Gamma\to\rm{PSL}(d,\bb R)$ by post-composing $\rho$ with the unique (up to conjugation) irreducible representation $i\colon\rm{PSL}(2,\bb R)\to \rm{PSL}(d,\bb R)$. The Teichm\"uller space $\cal T(S)$ embeds in this way in the character variety of $S$ and $\rm{PSL}(d,\bb R)$. The connected component $\mathcal H_d(S)$ of the character variety containing this image is known as the {\em Hitchin component}. Hitchin \cite{Hit_topol} showed that $\mathcal H_d(S)$ is homeomorphic to an open cell of dimension $-(\dim \rm{PSL}(d,\bb R))\chi(S)$. Choi and Goldman \cite{CG_convex} identify $\mathcal H_3(S)$ with the space $\frak C(S)$ which will be our main focus from Section \ref{sec:prelim} to Section \ref{sec:applications}.

In order to state the general correlation theorem for Hitchin representations (Theorem \ref{thm:HitchinMain}), we need to introduce some Lie theoretical notations. 

Let 
\[
\frak a=\{\vec x\in\bb R^d\mid x_1+\dots+x_d=0\}\qquad\text{ and }\qquad\frak a^+=\{\vec x\in\frak a\mid x_1\geq \dots\geq x_d\}
\]
denote the (standard) Cartan subspace for $\rm{PSL}(d,\mathbb{R})$ and the (standard) positive Weyl chamber, respectively. Let $\lambda\colon\rm{PSL}(d,\mathbb{R})\to\frak a^+$ be the \emph{Jordan projection} given by $\lambda(g)=(\log \lambda_{1}(g),\dots, \log \lambda_{d}(g))$
consisting of the logarithms of the moduli of the eigenvalues of $g$ in nonincreasing order. We consider linear functionals in
\[
\Delta=\left\{c_1\alpha_1+\dots+c_{d-1}\alpha_{d-1} \mid c_i\geq 0, \sum_ic_i>0\right\},
\]
where $\alpha_i\colon\frak a\to \bb R$ are the \emph{simple roots} defined by $\alpha_i(\vec x)=x_i-x_{i+1}$ with $i=1,\dots, d-1$. Observe that if $\phi\in\Delta$, then $\phi(\vec x)>0$ for all $\vec x$ in the interior of $\frak a^+$. The \emph{length function} $\ell^\phi_\rho$ for $\phi\in\Delta$ and $\rho\in \cal H_d(S)$ is defined by $\ell^\phi_\rho([\gamma])=\phi(\lambda(\rho(\gamma)))$. The length function is strictly positive because $\lambda(\rho(\gamma))$ is in the interior of $\frak a^+$ for all $[\gamma]\in[\Gamma]$ (see \cite{FG, Lab_Anosov}). The \emph{topological entropy} $h^{\phi}(\rho)$ and \emph{renormalized $\phi$-length} $L^{\phi}_{\rho}([\gamma])=h^{\phi}(\rho)\ell^{\phi}_{\rho}([\gamma])$ are defined in similar manner as for convex real projective structures. Given a Hitchin representation $\rho$, we denote by $\rho^{*}$ its contragredient given by $\rho^{*}=(\rho^{-1})^{t}$.

We are now ready to state the correlation theorem for Hitchin representations.
\begin{thm}\label{thm:HitchinMain}
Given a linear functional $\phi\in\Delta$ and a fixed precision $\eee>0$, for any two different Hitchin representations $\rho_1, \rho_2:\Gamma \to \rm{PSL}(d,\mathbb{R})$  such that $\rho_2\neq \rho_1^*$, there exist constants $C=C(\eee, \rho_1,\rho_2,\phi)>0$ and $M=M(\rho_1,\rho_2, \phi) \in (0,1)$ such that 
\[
\# \Big\{[\gamma]\in[\Gamma]\, \Big\vert\, L_{\rho_1}^{\phi}([\gamma]) \in \big(x,x+h^{\phi}(\rho_1)\eee\big),\  L_{\rho_2}^{\phi}([\gamma]) \in \big(x, x+h^{\phi}(\rho_2)\eee\big)\Big\} \sim C \frac{e^{Mx}}{x^{3/2}}.
\]
\end{thm}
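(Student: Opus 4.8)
The plan is to reduce Theorem~\ref{thm:HitchinMain} to the same dynamical machinery that underlies the correlation Theorem~\ref{thm:main}, the essential point being that the Hilbert geodesic flow used there is replaced by a more general \emph{metric Anosov} translation flow adapted to the linear functional $\phi$. First I would recall that a Hitchin representation $\rho\colon\Gamma\to\rm{PSL}(d,\bb R)$ is Anosov with respect to the minimal parabolic subgroup (by \cite{FG,Lab_Anosov}), so that for each $\phi\in\Delta$ the function $[\gamma]\mapsto\ell^\phi_\rho([\gamma])=\phi(\lambda(\rho(\gamma)))$ is a positive, translation-length-type function on conjugacy classes. The key structural input is that this $\phi$-length spectrum is realized as the period spectrum of a Hölder reparameterization of the Gromov geodesic flow on $\partial_\infty\Gamma$; since $\phi$ is a nonnegative combination of simple roots with positive total weight, $\ell^\phi_\rho$ is positive on every conjugacy class and the associated reparameterizing cocycle is positive and Hölder, hence defines a metric Anosov flow $\varphi^{\rho,\phi}$ whose marked length spectrum is exactly $\ell^\phi_\rho$. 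This places us in the setting where thermodynamic formalism—pressure functions, equilibrium states, and Ruelle transfer operators—is available.

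Next I would set up the joint counting problem as in the convex projective case. Pairing the two flows $\varphi^{\rho_1,\phi}$ and $\varphi^{\rho_2,\phi}$ over the common symbolic coding coming from $\Gamma$, one studies the two-variable pressure function $P(s,t)$ defined by the condition that the map $s,t\mapsto P$ balances the two reparameterizing cocycles; the renormalization by the respective entropies $h^\phi(\rho_i)$ normalizes so that both flows have unit topological entropy. The correlation number $M=M(\rho_1,\rho_2,\phi)$ then arises as the maximal value over the relevant line of a concave pressure/rate function, exactly as the Manhattan-curve characterization produces $M$ in Theorem~\ref{thm:Manhattan}; the hypotheses $\rho_1\neq\rho_2$ and $\rho_2\neq\rho_1^*$ guarantee that the two renormalized $\phi$-length spectra are genuinely distinct, so the relevant variance is strictly positive and $M\in(0,1)$ rather than $M=1$. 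The asymptotic $C\,e^{Mx}/x^{3/2}$ then follows from a local central limit theorem for the joint length distribution: the exponential rate $e^{Mx}$ comes from the large-deviation/pressure maximization, and the polynomial correction $x^{-3/2}$ comes from the Gaussian fluctuation transverse to the constrained direction, with $C$ collecting the variance and the leading Tauberian constant.

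The main technical step is therefore verifying that the joint transfer operator for the pair of reparameterized flows has the spectral properties required for a local central limit theorem with two simultaneous constraints, namely a dominant simple eigenvalue with a spectral gap and analytic dependence on the two auxiliary parameters, together with a non-lattice (aperiodicity) condition ensuring the error term. I expect this to follow by exactly the argument used to prove Theorem~\ref{thm:main}, once the Hilbert flow is replaced by $\varphi^{\rho_i,\phi}$, because all the required analytic inputs—Hölder regularity of the cocycles, mixing of the flows, and non-proportionality of the two length spectra—hold verbatim for Hitchin representations and any $\phi\in\Delta$.

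\textbf{The hard part} will be checking the non-lattice condition and the strict distinctness of the renormalized spectra for \emph{arbitrary} $\phi\in\Delta$ rather than for the Hilbert length alone: one must rule out the degenerate case in which $L^\phi_{\rho_1}$ and $L^\phi_{\rho_2}$ differ by a coboundary (which would force $M=1$), and this is where the hypothesis $\rho_2\notin\{\rho_1,\rho_1^*\}$ together with the rigidity results of \cite{CD_rigidity,Kim_rigidity} (and their Hitchin analogues) must be invoked to conclude that the two reparameterized flows are not Hölder-conjugate up to scaling, guaranteeing positive variance and hence the stated asymptotic with $M\in(0,1)$.
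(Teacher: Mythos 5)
Your overall architecture matches the paper's proof: realize the $\phi$-length spectrum as the period spectrum of Sambarino's translation flow on $M^\phi_\rho=(\partial^2\Gamma\times\bb R)/\Gamma$, check that this flow is metric Anosov and Hölder conjugate to a Hölder reparametrization of a reference geodesic flow (so that it admits a symbolic coding with Hölder roof function), produce a positive Hölder reparametrization function $f_{\rho_1}^{\rho_2}$ on $M^\phi_{\rho_1}$ encoding $\ell^\phi_{\rho_2}$, and then run the Lalley--Sharp constrained counting theorem exactly as in the convex projective case, with $M=h(\mu_{a_0})/h^\phi(\rho_1)$ read off the Manhattan curve and the strict inequalities $M\in(0,1)$ coming from the pressure-intersection rigidity of \cite{PressureMetric-MainPaper} together with independence. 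Your "local central limit theorem with two constraints" is just the content of Theorem \ref{prop:lalley}, which the paper cites rather than reproves, so up to that point the two arguments are the same.

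The one genuine gap is in your plan for what you correctly flag as the hard part. The non-lattice condition needed for Lalley--Sharp is the statement that no nontrivial integer combination $a_1\ell^\phi_{\rho_1}+a_2\ell^\phi_{\rho_2}$ takes values in $\bb Z$, and marked-length-spectrum rigidity in the style of \cite{CD_rigidity,Kim_rigidity} cannot deliver this: knowing that the two renormalized spectra are not proportional (or that the flows are not Hölder conjugate up to scaling) rules out $M=1$ but says nothing about arithmetic relations among periods --- indeed even weak mixing of a \emph{single} flow is not a rigidity statement. The paper's Lemma \ref{prop:independenceGeneral} instead argues through the Zariski closure: by Guichard's theorem the closure $\rm G_i$ of $\rho_i(\Gamma)$ is connected and simple, Benoist's density theorem for the Jordan projection of a Zariski dense subgroup of the semisimple group $\rm G_1\times\rm G_2$ forces $a_1=a_2=0$ unless the closure of $(\rho_1\times\rho_2)(\Gamma)$ is a proper subgroup, and Goursat's lemma plus the classification of outer automorphisms then reduces that case to $\rho_2=\rho_1$ or $\rho_1^*$, which is excluded by hypothesis. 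Without some argument of this Zariski-density type, your proof of aperiodicity does not close.
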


\begin{rem}
\begin{enumerate}
    \item For $\rho\in \cal H_d(S)$, when $\phi=\alpha_i$ and $i=1,\dots, d-1$, we know that $h^{\alpha_i}(\rho)=1$ thanks to \cite[Thm B]{PotrieSambarino}. Thus, $L^{\alpha_i}_\rho=\ell^{\alpha_i}_\rho$ is the simple root length without any renormalization. The correlation theorem in this case can be seen as a natural generalization of Schwartz and Sharp's correlation theorem for hyperbolic surfaces.
    \item Note that Theorem \ref{thm:main} is a corollary of Theorem \ref{thm:HitchinMain} when we set $d=3$ and consider the positive root $\phi(\vec x)=(\alpha_1+\alpha_2)(\vec x)=x_1-x_3$.
\end{enumerate}
\end{rem}

It would be interesting to extend the results on the correlation number proved in sections \ref{sec:corrnum} and \ref{sec:applications} to the context Hitchin representations. In this spirit, we end the paper by raising Question \ref{Que:SimpleRootLengths} and Conjecture \ref{Conj:cyclic} which are motivated by Theorem \ref{thm:pinching} and Theorem \ref{thm:cubicrays}, respectively.

\subsection*{Structure of the paper}

In Sections \ref{sec:prelim} through \ref{sec:applications}, we focus on convex projective structures. In this case, the length function can be defined geometrically via the Hilbert distance and Benoist proved in \cite{BenoistI} that the associated geodesic flow is Anosov. These two facts will simplify the exposition. The main results (Theorems \ref{thm:pinching}, \ref{thm:differentfibers} and \ref{thm:cubicrays}) in Sections \ref{sec:corrnum} and \ref{sec:applications} concern the behavior of the correlation number along geometrically defined sequences of convex projective structures. We establish the correlation theorem in full generality in Section \ref{sec:Hitchin}, after recalling the precise definitions of Hitchin representations and their length functions and the theory of metric Anosov translation flows.

\section{Preliminaries  for convex real projective structures}\label{sec:prelim} 
Consider a connected, closed, oriented surface $S$ with genus $g\geq 2$ and denote by $\Gamma$ its fundamental group. In this preliminary section we focus on convex projective structures on $S$. We will discuss in section \ref{sec:Hitchin} how parts of the material presented here hold for general Hitchin components.

The structure of Section \ref{sec:prelim} is as follows. In \S\ref{ssec:convexprojective}, we briefly recall the relevant geometric aspects of the theory of convex projective structures on surfaces. We refer to \cite{BenoistI,Gold_convex} for further details and background. In \S\ref{ssec:reparam}, we collect dynamical properties of convex projective surfaces which will play an important role in sections \ref{sec:corrthm} and \ref{sec:corrnum}. We briefly survey the theory of {\em geodesic currents} in \S\ref{ssec:currents} which will be used in the proof of Theorem \ref{thm:pinching} and in \S\ref{ssec:systole}. Finally, in \S \ref{ssec:independence} we prove the independence lemma (Lemma \ref{thm:indep}) which plays a key role in the proof of Theorem \ref{thm:main}.

\subsection{Convex projective surfaces}\label{ssec:convexprojective}

A \emph{properly convex set} $\Omega$ in $\mathbb {RP}^2$ is a bounded open convex subset of an affine chart. A properly convex set whose boundary does not contain open line segments is \emph{strictly convex}. We will exclusively focus on strictly convex sets in this paper. We equip a strictly convex set $\Omega$ with its \emph{Hilbert metric $d_\Omega$}. More precisely, if $x,y\in\Omega$, the projective line $\overline{xy}$ passing through $x$ and $y$ intersects the boundary of $\Omega$ in two points $a,b$ where $a,x,y,b$ appear in this order along $\overline{xy}$. The Hilbert distance between $x$ and $y$ is
\[
d_\Omega(x,y)=\frac{1}{2}\log [a,x,y,b]
\]
where $[a,x,y,b]$ denotes the crossratio of four points on a projective line. With the Hilbert metric, geodesics are segments of a projective line intersecting $\Omega$. Typically, the Hilbert metric is not Riemannian, but it derives from a Finsler norm. Thus, one can study the \emph{unit tangent bundle} $T^1\Omega$ of a strictly convex set $\Omega$.

The main objects of interest are representations $\rho\colon \Gamma\to \rm{SL}(3,\bb R)$ such that $\rho(\Gamma)$ preserves a properly convex set $\Omega_\rho$ on which it acts properly discontinuously with quotient homeomorphic to the closed surface $S$. In this case, we say that $\rho$ is a \emph{(marked real) convex projective structure} which \emph{divides} $\Omega_\rho$ and denote by $X_\rho$ the surface $S$ equipped with the convex projective structure $\rho$. Since $S$ is a closed surface of negative Euler characteristic, $\Omega_\rho$ is strictly convex and if $\gamma\in\Gamma$ is non-trivial, then the moduli $\lambda_1(\rho(\gamma))>\lambda_2(\rho(\gamma))>\lambda_3(\rho(\gamma))>0$ of the eigenvalues of $\rho(\gamma)$ are distinct. (See for example \cite[3.2 Theorem]{Gold_convex} and references therein). 

The Hilbert distance on $\Omega_\rho$ induces the \emph{Hilbert length spectrum $\ell_\rho^H$} for non-trivial conjugacy classes of group elements in $\Gamma$. Algebraically, if $[\Gamma]$ is the set of conjugacy classes of non-identity elements in $\Gamma$, namely $[\gamma]\in [\Gamma]$ is the conjugacy class of $\gamma\neq\text{id}$, then
\[
\ell_\rho^H([\gamma])=\frac{1}{2}\log\frac{\lambda_1(\rho(\gamma))}{\lambda_3(\rho(\gamma))}.
\]

Benoist \cite{BenoistI} proved that if $\rho$ is a convex projective structure on $S$, then $(\Omega_\rho,d_{\Omega_\rho})$ is Gromov hyperbolic, the Gromov boundary and the topological boundary of $\Omega_\rho$ coincide, and $\partial\Omega_\rho$ is of class $C^{1+\alpha}$ for some $\alpha\in(0,1]$. For a(ny) point $o\in \Omega_\rho$, the orbit map $\tau_o$ is a quasi-isometric embedding. It follows that the induced \emph{limit map} between Gromov boundaries $\xi_\rho\colon\partial\Gamma\to\partial\Omega_\rho$ is a $\rho$-equivariant bi-H\"older homeomorphism. 

If $\rho(\Gamma)$ divides a strictly convex set $\Omega_\rho$ in $\bb{RP}^2$, then $\rho^\ast(\Gamma)=(\rho(\Gamma)^{-1})^t$ divides a (typically different) strictly convex set $\Omega_{\rho^\ast}$. We refer to this operation on the space of convex projective structures as the \emph{contragredient involution}. The contragredient involution preserves the Hilbert length because for all $[\gamma]\in[\Gamma]$
\[
\ell^H_\rho([\gamma])=\frac{1}{2}\log\frac{\lambda_1(\rho(\gamma))}{\lambda_3(\rho(\gamma))}=\frac{1}{2}\log \frac{\frac{1}{\lambda_3(\rho(\gamma))}}{\frac{1}{\lambda_1(\rho(\gamma))}}=\ell^H_{\rho^\ast}([\gamma]).
\]
A standard computation using the irreducible representation $\rm{PSL}(2,\bb R)\to \rm{PSL}(3,\bb R)$ shows that if $\rho$ is a hyperbolic structure, then $\rho=\rho^\ast$. The converse holds by \cite[Thm 1.3]{Benoist_cvxcones}: if $\rho$ is a convex projective structure such that $\rho=\rho^*$, then $\rho$ is a hyperbolic structure. 

\subsection{The Hilbert geodesic flow and reparametrization function}\label{ssec:reparam}

Suppose that $\rho$ is a convex projective surface. The \emph{Hilbert geodesic flow} $\Phi^{\rho}$ is defined on the unit tangent bundle of the surface $T^{1}X_{\rho}$. The image $\Phi^\rho_{t}(w)$ of a point $w=(x,v)$ is obtained by following the unit speed geodesic for time $t$ leaving $x$ in the direction $v$. When it is clear from context, we simply write $\Phi^{\rho}$ as $\Phi$.  

 The Hilbert geodesic flow $\Phi$ on $T^{1}X_{\rho}$ is an example of a topologically mixing Anosov flow by \cite[Prop. 3.3 and 5.6]{BenoistI}. A standard reference for the theory of Anosov flows is \cite[\S 6]{Katok_dynamics}.
A key property for our discussion is that topologically mixing Anosov flows can be modeled by Markov partitions and symbolic dynamics in the sense of Bowen \cite{Bowen_symbolicHyperFlow}.  

Given a positive H\"older continuous function $f\colon T^1X_\rho\to \bb R$, one can define a {\em H{\"o}lder reparametrization} of the flow $\Phi$ by time change. We construct the flow $\Phi^{f}$ following \cite[section 2]{Sambarino_Quantitative}. First, we define $\kappa:T^1 X_{\rho} \times\mathbb{R}\to \mathbb{R} $ as
\[
\kappa(x,t)=\int_{0}^{t} f(\Phi_{s}(x))\mathrm{d}s.
\]
Given the fact that $f$ is positive and $T^{1}X_{\rho}$ is compact, the function $\kappa(x,\cdot)$ is an increasing homeomorphism of $\mathbb{R}$. We therefore have an inverse $\alpha: T^1 X_{\rho} \times\mathbb{R}\to \mathbb{R} $ that verifies
\[
\alpha(x,\kappa(x,t))=\kappa(x,\alpha(x,t))=t
\]
for every $x\in T^1X_{\rho} \times \mathbb{R}$. The H{\"o}lder reparametrization of $\Phi$  by the H{\"o}lder continuous function $f$ is given by $\Phi^{f}_t(x)= \Phi_{\alpha(x,t)}(x)$. We say that $f$ is a {\em reparametrization function} for $\Phi^f$. The new flow $\Phi^{f}=\{\Phi_t^{f}\}_{t\in\mathbb{R}}$ shares the same set of periodic orbits of $\Phi$. For any periodic orbit $\tau$ of $\Phi$ with period $\lambda(\tau)$, its period as a $\Phi^{f}$ periodic orbit is
\begin{equation}\label{eqn:NewPeriod}
\lambda(f,\tau)=\int_{0}^{\lambda(\tau)} f(\Phi_s(x))\mathrm{d}s. 
\end{equation}
Property (\ref{eqn:NewPeriod}) is a simple application of the definitions of $\alpha(x,t)$ and $\kappa(x,t)$.
\begin{rem} Each oriented closed geodesic $\gamma$ on a convex projective structure $\rho$ is associated with a periodic orbit $\tau$ of $\Phi$. On the other hand, an oriented closed geodesic $\gamma$ corresponds to a free homotopy class $[\gamma]\in[\Gamma]$. We adopt different perspectives depending on necessity in this paper while keeping in mind that they are the same object described from different points of view.
\end{rem}

\begin{rem}\label{rem:ReserveRepa}
\begin{enumerate}
    \item One can check from the definition that $\Phi$ is a H{\"o}lder reparametrization of $\Phi^{f}$ with the H{\"o}lder reparametrization function given by $1/f$.
    \item Set $\Psi=\Phi^f$ and consider $g$ a positive H\"older reparametrization of $\Psi$, then $\Psi^g$ is a H\"older reparametrization of $\Phi$ by the H\"older function $g\cdot f$.
\end{enumerate}
\end{rem}
Let $\rho_1$ and $\rho_2$ be two convex projective structures on a surface $S$. The next lemma states that there exists a positive H\"older continuous reparametrization function $f_{\rho_1}^{\rho_2}: T^{1}X_{\rho_1} \to \mathbb{R}$ encoding the Hilbert length spectrum of $\rho_2$.

\begin{lem}\label{lem: rep} Let $\rho_1$ and $\rho_2$ be convex projective structures on a surface $S$. There exists a positive H{\"o}lder continuous function $f_{\rho_1}^{\rho_2}: T^{1}X_{\rho_1} \to \mathbb{R}$ such that for every periodic orbit $\tau$ corresponding to $[\gamma]\in [\Gamma]$ one has
\[
\lambda\left(f_{\rho_1}^{\rho_2},\tau\right)=\ell_{\rho_2}^{H}([\gamma]).
\]
\end{lem}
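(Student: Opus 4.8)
The plan is to produce $f_{\rho_1}^{\rho_2}$ from a marking-preserving H\"older orbit equivalence between the two Hilbert geodesic flows, and then to read off the reparametrization function from the associated time-change cocycle. Concretely, I would first identify the lifted flows with a common model. Benoist's theorem supplies, for each $i$, a $\rho_i$-equivariant bi-H\"older limit map $\xi_{\rho_i}\colon\partial\Gamma\to\partial\Omega_{\rho_i}$, which induces a $\Gamma$-equivariant bi-H\"older homeomorphism $\partial^{(2)}\Gamma\to\partial^{(2)}\Omega_{\rho_i}$ on pairs of distinct endpoints. Since an oriented Hilbert geodesic in $\Omega_{\rho_i}$ is determined by its two endpoints on $\partial\Omega_{\rho_i}$ together with a position along it, the lifted flow $\widetilde{\Phi}^{\rho_i}$ on $T^1\Omega_{\rho_i}$ is $\Gamma$-equivariantly conjugate to the translation flow on $\partial^{(2)}\Gamma\times\bb R$. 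Composing the two identifications yields a $\Gamma$-equivariant homeomorphism of $\partial^{(2)}\Gamma\times\bb R$ that is the identity on the $\partial^{(2)}\Gamma$ factor, hence sends $\widetilde{\Phi}^{\rho_1}$-orbits to $\widetilde{\Phi}^{\rho_2}$-orbits; after a $\Gamma$-equivariant choice of basepoint along each orbit, it descends to a H\"older orbit equivalence $E\colon T^1X_{\rho_1}\to T^1X_{\rho_2}$ that carries the periodic orbit labeled by $[\gamma]$ to the periodic orbit labeled by $[\gamma]$, preserving orientation.

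Given $E$, I would write $E(\Phi^{\rho_1}_t(x))=\Phi^{\rho_2}_{c(x,t)}(E(x))$, where $c\colon T^1X_{\rho_1}\times\bb R\to\bb R$ is the time-change cocycle; it satisfies the additive identity $c(x,s+t)=c(x,s)+c(\Phi^{\rho_1}_s x,t)$ and each $c(x,\cdot)$ is an orientation-preserving homeomorphism of $\bb R$. I would then set
\[
f_{\rho_1}^{\rho_2}(x)=\frac{\partial}{\partial t}\Big\vert_{t=0}c(x,t).
\]
Differentiating the cocycle identity in $s$ gives $\frac{\partial}{\partial s}c(x,s)=f_{\rho_1}^{\rho_2}(\Phi^{\rho_1}_s x)$, so that $c(x,T)=\int_0^T f_{\rho_1}^{\rho_2}(\Phi^{\rho_1}_s x)\,\mathrm{d}s$ for all $T$; positivity of $f_{\rho_1}^{\rho_2}$ is immediate because $c(x,\cdot)$ is increasing. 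For the period statement, I take $x$ on the periodic orbit $\tau$ of $\Phi^{\rho_1}$ with period $\lambda(\tau)=\ell^H_{\rho_1}([\gamma])$. Then $E(x)$ lies on the $\Phi^{\rho_2}$-orbit labeled by $[\gamma]$, and because $E$ wraps $\tau$ exactly once around that orbit preserving orientation, $c(x,\ell^H_{\rho_1}([\gamma]))$ equals its minimal $\Phi^{\rho_2}$-period $\ell^H_{\rho_2}([\gamma])$. Comparing with \eqref{eqn:NewPeriod} gives precisely $\lambda(f_{\rho_1}^{\rho_2},\tau)=\ell^H_{\rho_2}([\gamma])$.

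The hard part will be the H\"older regularity of $f_{\rho_1}^{\rho_2}$: the orbit equivalence $E$ is only transversally H\"older, so differentiating its cocycle naively along orbits is delicate. I expect to obtain regularity instead through the dictionary between H\"older cocycles on $\Gamma$ and H\"older reparametrizations of the geodesic flow used in \cite{Sambarino_Quantitative}. The relevant cocycle is the Busemann cocycle of $(\Omega_{\rho_2},d_{\Omega_{\rho_2}})$ pulled back by $\xi_{\rho_2}$; since $\partial\Omega_{\rho_2}$ is of class $C^{1+\alpha}$ by Benoist \cite{BenoistI}, the Hilbert Busemann functions are H\"older, so this cocycle is H\"older and its period over $[\gamma]$ is the $d_{\Omega_{\rho_2}}$-translation length $\ell^H_{\rho_2}([\gamma])$. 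The dictionary then produces a positive H\"older function on the model $\partial^{(2)}\Gamma\times\bb R/\Gamma\cong T^1X_{\rho_1}$ whose periods are exactly these cocycle periods, and a Liv\v{s}ic argument identifies it, up to a coboundary that does not affect periods, with the cocycle-derivative description above. This function is the desired $f_{\rho_1}^{\rho_2}$.
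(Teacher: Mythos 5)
Your final paragraph is exactly the paper's argument: transport everything to a common boundary model via Benoist's bi-H\"older limit maps and invoke the Ledrappier--Sambarino dictionary (Sambarino's reparametrization theorem applied to the H\"older Hilbert--Busemann cocycle of $\rho_2$, whose regularity comes from the $C^{1+\alpha}$ boundary) to produce a positive H\"older function on $T^1X_{\rho_1}$ with periods $\ell^H_{\rho_2}$. The preceding orbit-equivalence/cocycle-derivative discussion is only a heuristic whose regularity defect you correctly flag and then bypass, so the proposal is correct and takes essentially the same route as the paper.
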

\begin{proof} This is a standard argument which we include for the sake of completeness.
Let us lift the picture to the universal cover. By \cite[Equation (20)]{BenoistI} and since the limit map of $\rho_1$ is bi-H\"older, there exists a H\"older continuous $\rho_1$-equivariant homeomorphism $\chi: T^1\Omega_{\rho_1}\to \partial^3 \Omega_{\rho_1}$ where $\partial^3\Omega_{\rho_1}$ is the set of ordered triples of distinct points in $\partial\Omega_{\rho_1}$. Since the Hilbert geodesic flow is Anosov, it follows from \cite[Theorem 3.2]{Sambarino_Quantitative} (see also \cite[Proposition 5.21]{Intro_pressure}) that for any choice of an auxiliary hyperbolic surface $\rho_0$, there exists a H\"older continuous positive reparametrization $g^{\rho_2}_{\rho_0}\colon T^1\Omega_{\rho_0}\cong\partial^3\bb H^2\to\bb R$ of the geodesic flow of $\rho_0$ with periods $\ell_{\rho_2}^{H}([\gamma])$. Considering the H\"older continuous function $\xi^{(3)}\colon\partial^3 \Omega_{\rho_1}\to \partial^3\bb H^2$ induced by the inverse of the limit map $\xi_{\rho_1}$ and the limit map $\xi_{\rho_0}\colon\partial\Gamma\to\partial\bb H^2$, we obtain the composition $g^{\rho_2}_{\rho_0}\circ\xi^{(3)}\circ \chi\colon T^1\Omega_{\rho_1}\to\bb R$ which is the lift of the desired reparametrization function $f^{\rho_2}_{\rho_1}$. The equality $\lambda\left(f_{\rho_1}^{\rho_2},\tau\right)=\ell_{\rho_2}^{H}([\gamma])$ follows from equivariance of the limit maps. 
\end{proof}

\subsection{Thermodynamic formalism}\label{ssec:thermo}
 In this subsection, we will introduce several important concepts from thermodynamic formalism in our context that will be needed later. Standard references for thermodynamic formalism and Markov codings are \cite{Bowen-LectureNotes, Pollicot_Zeta}.

For a continuous function $f: T^{1}X_{\rho} \to \mathbb{R}$, we define its \emph{pressure} with respect to $\Phi$ as
\begin{equation*}
 P(\Phi,f)=\limsup\limits_{T\longrightarrow \infty} \frac{1}{T}\log\Big(\sum\limits_{\tau\in R_T} e^{\lambda(f,\tau)}\Big)
\end{equation*}
where $R_T:=\{\tau \text{ } \text{periodic orbit of }\Phi\text{ }|\text{ }\lambda(\tau)\in[T-1, T]\}$. One can check that the topological entropy $h(\rho)$ is $P(\Phi,0)$. For simplicity, we omit the geodesic flow $\Phi$ from the notation and write $P(\cdot)$ for $P(\Phi, \cdot)$. The pressure can be characterized as follows.
\begin{prop}[Variational principle]
\label{prop defn_pressure}
The pressure of a continuous function $f:T^1X_{\rho}\to \mathbb{R}$ satisfies
\begin{equation*}
P(f)= \sup\limits_{\mu\in \mathcal{M}^{\Phi}}\Big(h(\mu)+ \int f \mathrm{d}\mu\Big)
\end{equation*}
where $\mathcal M^\Phi$ is the space of $\Phi$-invariant probability measures on $T^1X_\rho$ and $h(\mu)=h(\Phi,\mu)$ denotes the measure-theoretic entropy of $\Phi$ with respect to $\mu\in\mathcal{M}^{\Phi}$.
\end{prop}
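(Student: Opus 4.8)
The plan is to reduce the statement to the classical variational principle for a subshift of finite type, using the symbolic model of the flow recalled above. Since $\Phi$ is a topologically mixing Anosov flow, it can be modeled by a Markov partition in the sense of Bowen \cite{Bowen_symbolicHyperFlow}: up to a finite-to-one semiconjugacy that is injective off a set of measure zero, $\Phi$ is the suspension of a two-sided subshift of finite type $\sigma\colon\Sigma\to\Sigma$ under a strictly positive H\"older roof function $r\colon\Sigma\to\mathbb R_{>0}$. First I would record the dictionary this coding provides: a periodic orbit $\tau$ of $\Phi$ corresponds to a periodic point $x$ of $\sigma$, say of period $n$, its geometric period is the Birkhoff sum $\lambda(\tau)=\sum_{k=0}^{n-1} r(\sigma^k x)$, and its reparametrized period is $\lambda(f,\tau)=\sum_{k=0}^{n-1}\Delta f(\sigma^k x)$, where $\Delta f(y)=\int_0^{r(y)} f(\Phi_s\hat y)\,\mathrm ds$ is the potential induced by $f$ on the base and $\hat y$ denotes the point of the suspension corresponding to $y$ at height $0$.

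Next I would invoke the Bowen--Ruelle description of the pressure of a suspension flow: the quantity $P(f)$ defined by the orbit sum is the unique real number $s$ for which the topological pressure of the induced potential on the base vanishes, namely
\[
P_\sigma\big(\Delta f - s\,r\big)=0,
\]
where $P_\sigma$ is the usual pressure of the subshift. Specializing to $f\equiv 0$ recovers the Bowen equation $P_\sigma(-h\,r)=0$ for the topological entropy, consistent with $P(0)=h(\rho)$. On the subshift itself, the variational principle
\[
P_\sigma(\psi)=\sup_{\nu\in\mathcal M^\sigma}\Big(h_\sigma(\nu)+\int\psi\,\mathrm d\nu\Big),
\]
where $\mathcal M^\sigma$ is the space of $\sigma$-invariant probability measures, is classical for continuous $\psi$ \cite{Bowen-LectureNotes,Pollicot_Zeta}.

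Finally I would transfer this equality from the base to the flow. The suspension sets up a bijection $\nu\mapsto\mu$ between $\mathcal M^\sigma$ and $\mathcal M^\Phi$, given by normalizing the product of $\nu$ with Lebesgue measure along the flow direction, and Abramov's formula yields $h(\mu)=h_\sigma(\nu)/\!\int r\,\mathrm d\nu$ together with $\int f\,\mathrm d\mu=\int\Delta f\,\mathrm d\nu/\!\int r\,\mathrm d\nu$. Combining $P_\sigma(\Delta f - P(f)\,r)=0$ with the subshift variational principle gives, for every $\nu$, the inequality $h_\sigma(\nu)+\int(\Delta f - P(f)\,r)\,\mathrm d\nu\le 0$, and dividing by $\int r\,\mathrm d\nu>0$ turns this into $h(\mu)+\int f\,\mathrm d\mu\le P(f)$, with the supremum attained along the maximizing sequence for $P_\sigma$; taking the supremum over $\mu\in\mathcal M^\Phi$ then gives the claimed identity. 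The step I expect to be the main obstacle is the bookkeeping in the symbolic reduction: one must check that the orbit-counting $\limsup$ over $R_T$ really computes the symbolic pressure $P_\sigma$ (here topological mixing and the specification property guarantee that the periodic-orbit definition agrees with the separated-set definition of pressure), and one must handle the finitely many points where the Markov coding fails to be injective so that neither the measure correspondence nor the orbit count is corrupted. Once the symbolic model is correctly set up, the remainder is a routine application of Abramov's formula and the classical variational principle.
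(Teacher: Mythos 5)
The paper does not actually prove Proposition \ref{prop defn_pressure}: it is recorded as a standard fact of thermodynamic formalism, deferred to the references \cite{Bowen-LectureNotes, Pollicot_Zeta} named at the head of \S\ref{ssec:thermo}. Your outline --- Bowen's Markov coding of the topologically mixing Anosov flow, the root characterization $P_\sigma\big(\Delta f - P(f)\,r\big)=0$ of the suspension pressure, the classical variational principle on the subshift, and Abramov's formula to transport measures, entropies, and integrals back to the flow --- is precisely the standard argument from those references, and the two technical points you flag (agreement of the periodic-orbit $\limsup$ over $R_T$ with the symbolic pressure, and the finite-to-one boundary identifications of the Markov partition) are exactly where the real work lies; both are handled in the cited literature, so your proposal is correct and matches the proof the paper implicitly invokes.
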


 A $\Phi$-invariant probability measure $
\mu$ on $T^1X_{\rho}$ is called an \emph{equilibrium state} for $f$ if the supremum is attained at $\mu$.

\begin{rem}
\begin{enumerate}
    \item For a H{\"older} continuous function $f:T^1X_{\rho}\to \mathbb{R}$ there exists a unique equilibrium state $\mu_f$ by \cite[Theorem 3.3]{Bowen-Ruelle}.
 \item The equilibrium state $\mu_0$ for $f=0$ is called a {\em probability measure of maximal entropy} or {\em Bowen-Margulis measure}, denoted as $\mu_{\Phi}$. The Hilbert geodesic flow $\Phi$ is topologically mixing and Anosov, thus admits a unique measure of maximal entropy on $T^{1}X_{\rho}$. The entropy of the measure of maximal entropy coincides with the topological entropy. See for instance \cite[Section 20]{Katok_dynamics}.
 \end{enumerate}
\end{rem}

The following lemma, derived from Abramov's formula \cite{Abr_formula}, allows us to rescale a reparametrization function to be pressure zero. 

\begin{lem}[{Sambarino \cite[Lemma 2.4]{Sambarino_Quantitative}, Bowen-Ruelle \cite[Proposition 3.1]{Bowen-Ruelle}}]\label{lem:PressueZero} For a positive H\"older reparametrization function $f$ on $T^{1}X_{\rho}$ and $h\in\bb R$, the pressure function satisfies
\[P(-hf)=0\]
if and only if $h=h(\Phi^{f})$,
where 
\[
h(\Phi^{f})=\limsup_{T\to\infty} \frac{1}{T}\log\#\{\tau \text{ periodic orbit} \mid \lambda(f,\tau)\leq T\}.
\]
By definition, $h(\Phi^{f})$ is the topological entropy of the reparametrized flow $\Phi^{f}$.
\end{lem}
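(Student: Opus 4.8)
The plan is to deduce the equivalence from the variational principle (Proposition~\ref{prop defn_pressure}) combined with Abramov's formula \cite{Abr_formula}, which controls how invariant measures and their entropies behave under a time change. Recall that Abramov's formula furnishes a bijection $\mu\mapsto\mu^f$ between $\mathcal M^\Phi$ and the space $\mathcal M^{\Phi^f}$ of $\Phi^f$-invariant probability measures, given by $\mathrm{d}\mu^f=\frac{f\,\mathrm{d}\mu}{\int f\,\mathrm{d}\mu}$, under which the measure-theoretic entropies transform as
\[
h(\Phi^f,\mu^f)=\frac{h(\mu)}{\int f\,\mathrm{d}\mu}.
\]
Applying the variational principle to the reparametrized flow $\Phi^f$ and rewriting its supremum through this correspondence yields the identity
\[
h(\Phi^f)=\sup_{\nu\in\mathcal M^{\Phi^f}}h(\Phi^f,\nu)=\sup_{\mu\in\mathcal M^\Phi}\frac{h(\mu)}{\int f\,\mathrm{d}\mu},
\]
which is the bridge between the two quantities in the statement.

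Writing $H=h(\Phi^f)$, I would first verify that $P(-Hf)=0$. The identity above shows that every $\mu\in\mathcal M^\Phi$ satisfies $h(\mu)-H\!\int f\,\mathrm{d}\mu\le 0$, so the variational principle gives $P(-Hf)=\sup_\mu\big(h(\mu)-H\!\int f\,\mathrm{d}\mu\big)\le 0$. For the reverse inequality, since $\Phi^f$ is a Hölder time change of a topologically mixing Anosov flow it carries a measure of maximal entropy; transporting it back through the correspondence produces $\mu_0\in\mathcal M^\Phi$ attaining the supremum defining $H$, whence $h(\mu_0)-H\!\int f\,\mathrm{d}\mu_0=0$ and $P(-Hf)\ge 0$. (One may instead argue with a maximizing sequence, using that $\int f\,\mathrm{d}\mu$ stays uniformly bounded.) Thus $P(-Hf)=0$, establishing the implication $h=h(\Phi^f)\Rightarrow P(-hf)=0$.

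It remains to see that $h(\Phi^f)$ is the \emph{only} zero. The map $h\mapsto P(-hf)=\sup_\mu\big(h(\mu)-h\!\int f\,\mathrm{d}\mu\big)$ is a supremum of affine functions of $h$, hence convex, and it is strictly decreasing: as $T^1X_\rho$ is compact and $f$ is positive and continuous, $\int f\,\mathrm{d}\mu\ge\min f>0$ for every $\mu\in\mathcal M^\Phi$, so for $h_1<h_2$ one gets $P(-h_2 f)\le P(-h_1 f)-(h_2-h_1)\min f<P(-h_1 f)$. A strictly decreasing function has at most one zero, so $P(-hf)=0$ forces $h=H=h(\Phi^f)$, which gives the converse implication.

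The main obstacle is the measure correspondence underpinning Abramov's formula, namely that $\mu\mapsto\mu^f$ is a well-defined entropy-rescaling bijection between the invariant probability measures of $\Phi$ and of $\Phi^f$; this is precisely the content of \cite{Abr_formula}. Once it is in hand, the remaining ingredients---strict monotonicity of the pressure and the existence of the maximizing measure, both guaranteed by the compactness of $T^1X_\rho$ and the Anosov property recalled in \S\ref{ssec:reparam}---are routine.
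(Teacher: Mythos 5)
Your argument is correct, and it is essentially the standard proof: the paper itself does not prove this lemma but cites it to Sambarino and Bowen--Ruelle, and the derivation you give---Abramov's entropy-rescaling bijection $\mu\mapsto\mu^f$, the variational principle applied to $\Phi^f$ to get $h(\Phi^f)=\sup_\mu h(\mu)/\int f\,\mathrm{d}\mu$, and strict monotonicity of $h\mapsto P(-hf)$ from $\int f\,\mathrm{d}\mu\geq\min f>0$---is exactly the content of those references. The only step you use silently is the identification of the periodic-orbit growth rate defining $h(\Phi^f)$ with the variational topological entropy $\sup_\nu h(\Phi^f,\nu)$; this holds here because a H\"older time change of an Anosov flow is again Anosov, and the paper adopts the same identification, so it is not a gap.
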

We will use Lemma \ref{lem:PressueZero} in the proofs of Theorems \ref{thm:main}, Theorem \ref{thm:pinching} and Theorem \ref{thm:cubicrays}.

\begin{rem} By construction, if $f=f^{\rho_2}_{\rho_1}$ is the reparametrization function defined in Lemma \ref{lem: rep}, then the topological entropy of the flow $\Phi^f$ is equal to the topological entropy $h(\rho_2)$ of $\rho_2$ as defined in the introduction.
\end{rem}
Finally, we introduce \emph{(Liv\v{s}ic) cohomology}. We say two H\"older continuous functions $f$ and $g$ are  \emph{(Liv\v{s}ic) cohomologous} if there exists a H{\"o}lder continuous function $V\colon T^1X_{\rho}\to \mathbb{R}$ that is differentiable in the flow's direction such that 
  \begin{equation*}
      f(x)-g(x)= \frac{\partial}{\partial t}\bigg|_{t=0}V(\Phi_{t}(x)).
  \end{equation*}
  
\begin{rem}\label{rem Liv}
\begin{enumerate}
    \item \emph{(Liv\v{s}ic's Theorem, \cite{Liv_ic_Cohomology})} Two H\"older continuous functions $f$ and $g$ are cohomologous on $T^1X_{\rho}$ if and only if $\lambda(f,\tau)=\lambda(g,\tau)$ for any periodic orbit $\tau$ of $\Phi$. It follows that the pressure of a H\"older continuous function depends only on its cohomology class.
    \item  Two H\"older continuous functions $f$ and $g$ have the same equilibrium state on $T^1X_{\rho}$ if and only if $f-g$ is cohomologous to a constant $C$. In this case, we have $P(f)=P(g)+C$ \cite[Section 20]{Katok_dynamics}.
\end{enumerate}
\end{rem}  

\subsection{Geodesic currents for convex projective surfaces}\label{ssec:currents}

Fix an auxiliary hyperbolic structure $m$ on $S$. A \emph{geodesic current} is a Borel, locally-finite, $\pi_1(S)$-invariant measure on the set of complete geodesics of the universal cover $\wt S$. An important example is the geodesic current $\delta_\gamma$ given by Dirac measures on the axes of the lifts of a closed geodesic $\gamma$ in $S$.

The space $\cal C(S)$ of geodesic currents is a convex cone in an infinite dimensional vector space. Bonahon \cite{Bonahon_annals} extended the intersection pairing on closed curves to the space of geodesic currents, i.e. there exists a positive, symmetric, bilinear pairing
\[
i\colon \cal C(S)\times\cal C(S)\to \bb R_{\geq 0}
\]
such that $i(\delta_c,\delta_d)$ equals the intersection number of the closed geodesics $c$ and $d$.

Extending work of Bonahon \cite{Bon_currents}, in \cite{BCLS_currents,MZ_currents} it was shown that for each convex projective surface $\rho$ there exists a \emph{Hilbert geodesic current} $\nu_\rho$ such that for every $[\gamma]\in[\Gamma]$  
\[
i(\nu_\rho,\delta_\gamma)=\ell_\rho^H([\gamma]),
\]
where  $\gamma$ denotes the unique closed geodesic in its free homotopy class $[\gamma]$. Bonahon \cite[Proposition 15]{Bon_currents} proves that the geodesic current $\nu_{\rho}$ of a hyperbolic structure $\rho$ has self-intersection $i(\nu_{\rho},\nu_{\rho})=-\pi^2\chi(S)$. On the other hand, if $\rho\in\frak C(S)$ is not in the Fuchsian locus, then $i(\nu_\rho,\nu_\rho)>-\pi^2\chi(S)$ by Corollary 5.3 in \cite{BCLS_currents}.

In general, given a geodesic current $\nu$, we can use the intersection number to define its length spectrum $\ell_\nu\colon[\Gamma]\to\bb R^+$ as $\ell_\nu([\gamma])=i(\nu,\delta_{\gamma})$. The \emph{systole} of $\nu$ is then $
\rm{Sys}(\nu):=\inf_{[\gamma]\in[\Gamma]}\ell_\nu([\gamma])$.
Corollary 1.5 in \cite{BIPP_compact} shows that $\rm{Sys}\colon\cal C(S)\to[0,\infty)$ is a continuous function. 

A geodesic current is {\em period minimizing} if for all $T>0$ the set $ \#\{[\gamma]\in[\Gamma]\mid \ell_{\nu}([\gamma])<T\}$ is finite.
We define the \emph{exponential growth rate} of a period minimizing geodesic current $\nu$ by
\[
h(\nu)=\lim_{T\to \infty}\frac{1}{T}\log \#\{[\gamma]\in[\Gamma]\mid \ell_{\nu}([\gamma])<T\}.
\]
The notation is motivated by the fact that if $\rho$ is a convex projective structure and $\nu_\rho$ is the corresponding Hilbert geodesic current, then $h(\nu_\rho)$ is equal to the topological entropy $h(\rho)$ of $\rho$. 

The systole and the exponential growth rate of a geodesic current are related by the following inequality, which will play an important role in the proof of Theorem \ref{thm:pinching}.
\begin{thm}[Corollary 7.6 in \cite{MZ_currents}]\label{thm:entropysystole} Let $S$ be a closed, connected, oriented surface of genus $g\geq 2$. There exists a constant $C>0$ depending only on $g$ such that for every period minimizing geodesic current $\nu\in\mathcal C(S)$
\[
\text{Sys}(\nu)h(\nu)\leq C.
\]
\end{thm}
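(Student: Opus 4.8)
The plan is to exploit the scale-invariance of the quantity $\mathrm{Sys}(\nu)h(\nu)$ and thereby reduce the statement to a single systolic-type estimate. First I would record that replacing $\nu$ by $\lambda\nu$ for $\lambda>0$ multiplies every period $\ell_\nu([\gamma])=i(\nu,\delta_\gamma)$ by $\lambda$; hence $\mathrm{Sys}(\lambda\nu)=\lambda\,\mathrm{Sys}(\nu)$, while $h(\lambda\nu)=\lambda^{-1}h(\nu)$ because $\#\{[\gamma]\mid \ell_{\lambda\nu}([\gamma])<T\}=\#\{[\gamma]\mid \ell_{\nu}([\gamma])<T/\lambda\}$. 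Thus the product $\mathrm{Sys}(\nu)h(\nu)$ is unchanged, and after rescaling we may assume $\mathrm{Sys}(\nu)=1$. The theorem then becomes the assertion that the exponential growth rate $h(\nu)$ of a period minimizing current all of whose periods are $\geq 1$ is bounded above by a constant depending only on $g$.

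To bound this growth rate I would import the exponential upper bound for counting closed geodesics from an auxiliary hyperbolic structure. Fix a hyperbolic metric $m$ on $S$ with Liouville current $\nu_m$; by the Margulis/prime geodesic theorem the counting function $\#\{[\gamma]\mid \ell_m([\gamma])\leq L\}$ grows with exponential rate $h(\nu_m)=1$. If one can produce, for each normalized $\nu$, a comparison $\ell_m([\gamma])\leq \Lambda\,\ell_\nu([\gamma])$ valid for all $[\gamma]$, then $\{\ell_\nu\leq T\}\subseteq\{\ell_m\leq \Lambda T\}$ forces $h(\nu)\leq \Lambda\,h(\nu_m)=\Lambda$, whence $\mathrm{Sys}(\nu)h(\nu)\leq \Lambda$. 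The subtlety, which I would emphasize, is that $m$ \emph{cannot} be chosen independently of $\nu$: twisting a fixed $m$ by high powers of a Dehn twist produces currents with bounded systole but with a systolic curve of arbitrarily large $m$-length, so no uniform comparison to a fixed reference can hold. Instead $m$ must be adapted to $\nu$, and the real content is to show that the adapted comparison constant $\Lambda$ satisfies $\Lambda\leq C(g)$ once $\mathrm{Sys}(\nu)=1$.

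Establishing this genus-only bound is the main obstacle, and it is here that I would invoke the intersection-theoretic estimates underlying the earlier results. Conceptually, $\mathrm{Sys}(\nu)=1$ is a lower bound on the ``injectivity radius'' of the (possibly non-Riemannian, non-geometric) length structure defined by $\nu$, and $h(\nu)$ is its volume entropy; the desired inequality is then a systolic inequality in which the role of total area is played by a purely topological quantity governed by $\chi(S)$. The hard step is to make this precise for an arbitrary period minimizing current, where no underlying metric or embedded balls are available: one must replace the classical packing argument (embedded balls of radius $\mathrm{Sys}/2$ have volume at most the total area, while their volume grows like $e^{h\cdot \mathrm{Sys}/2}$) by a combinatorial packing estimate for the $\delta_\gamma$ inside the sublevel set $\{\mu\mid i(\nu,\mu)\leq T\}$, whose multiplicative constant is controlled by the number of complementary regions of a filling system on $S$, hence by $g$ alone. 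I expect the filling/period minimizing hypothesis to be exactly what guarantees compactness of these sublevel sets, so that the packing count is finite with a well-defined rate, and the control of the packing constant by $\chi(S)$ to be the technical heart of the argument.
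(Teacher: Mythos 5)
The paper does not actually prove this statement: it is imported verbatim as Corollary 7.6 of Martone--Zhang \cite{MZ_currents} and used as a black box, so there is no internal proof to compare against. Judged on its own terms, your proposal has a genuine gap. The scale-invariance reduction to the case $\mathrm{Sys}(\nu)=1$ is correct, and you are right that no fixed auxiliary hyperbolic metric can give a uniform length comparison (your Dehn-twist example is exactly the obstruction). But everything after that describes what a proof would have to accomplish rather than supplying one: the entire quantitative content of the theorem is the bound $h(\nu)\le C(g)$ for normalized $\nu$, and you leave that step at ``the technical heart of the argument'' and ``I expect the filling/period minimizing hypothesis to be exactly what guarantees\dots''.

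Moreover, the packing heuristic you lean on does not transfer. In the classical argument the packing bound has the volume of an embedded ball of radius $\mathrm{Sys}/2$ in the denominator and the total area in the numerator; neither quantity has an analogue controlled by $\chi(S)$ for a general geodesic current. The only available ``area'', the self-intersection $i(\nu,\nu)$, is not bounded above in terms of $g$ even after normalizing the systole, so the assertion that the packing constant is ``governed by the number of complementary regions'' is precisely the statement to be proved, not a consequence of a standard argument. The way the gap is actually closed (and essentially the route in Martone--Zhang) is combinatorial rather than volumetric: one produces from $\nu$ a filling collection of closed curves whose $\nu$-lengths are bounded by a fixed multiple of $\mathrm{Sys}(\nu)$, proves via surgery estimates on the intersection pairing that a class $[\gamma]$ crossing this collection $K$ times satisfies $\ell_\nu([\gamma])\gtrsim K\,\mathrm{Sys}(\nu)/C(g)$, and then counts homotopy classes of bounded combinatorial complexity relative to the filling collection --- a count that is purely topological, hence uniform in $\nu$. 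Your outline gestures at both ingredients but constructs neither the adapted filling system nor the surgery inequality, and these are exactly the two steps that make the constant depend only on $g$.
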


\subsection{Independence of convex projective surfaces}\label{ssec:independence}
We start by recalling the notion of (topologically) weakly mixing flows which motivates the concept of independence of representations. A flow $\varphi$ on $T^{1}X_{\rho}$ is \emph{weakly mixing} if its periods do not generate a discrete subgroup of $\mathbb{R}$. In particular, we can ask whether the Hilbert geodesic flow $\Phi$ is weakly mixing. Equivalently, we ask whether there exists some non-zero real number $a\in\mathbb{R}$ such that $a\ell_{\rho}^{H}(\gamma)\in \mathbb{Z}$ for all $[\gamma]\in\Gamma$. In the proof of Theorem \ref{thm:main}, we will need a strengthening of this property for a pair of representations.

Two convex projective structures $\rho_1$ and $\rho_2$ are \emph{dependent} if there exist $a_1,a_2\in\mathbb R$, not both equal to zero, such that $a_1\ell^H_{\rho_1}([\gamma])+a_2\ell^H_{\rho_2}([\gamma])\in\mathbb Z$ for all $[\gamma]\in[\Gamma]$. Otherwise, $\rho_1$ and $\rho_2$ are \emph{independent} over $\mathbb Z$. The next definition clarifies that this notion of independence is of a dynamical nature.

\begin{defn} Two positive H\"older continuous functions $f_1,f_2\colon T^1X_\rho\to\bb R$ are \emph{dependent} if there exists $a_1,a_2\in \bb R$ not both equal to zero and a  complex valued $C^1$ function $u: T^1X_\rho\to S^1$ such that $a_1f_1+a_2f_2=\frac{1}{2\pi i}\frac{u'}{u}$.  
Here, $u'$ denotes the derivative of $u$ along the flow, i.e. $\frac{\partial}{\partial t}\big\vert_{t=0} u\circ \Phi_t$. Otherwise, $f_1$ and $f_2$ are \emph{independent}. In particular, $f$ is said to be \emph{(in)dependent} if $f$ and the constant function $g \equiv 1$ are (in)dependent.
\end{defn}

\begin{rem}
The integral over a closed orbit of $\frac{u'}{u}$ is an integer multiple of $2\pi i$. Thus, by integrating along closed orbits and using Equation (\ref{eqn:NewPeriod}) and Remark \ref{rem:ReserveRepa}, we see that if $\rho_1$ and $\rho_2$ are independent, then the reparametrization function $f_{\rho_1}^{\rho_2}$ on $T^1X_{\rho_1}$ is independent.
\end{rem}

We now prove that convex projective surfaces with distinct Hilbert length spectra are independent.

\begin{lem}[Independence lemma]\label{thm:indep} Let $\rho_1$ and $\rho_2$ be convex projective structures with distinct Hilbert length spectra. If there exist $a_1,a_2\in\bb R$ such that $a_1\ell^H_{\rho_1}([\gamma])+a_2\ell^H_{\rho_2}([\gamma])\in\bb Z$ for all $[\gamma]\in[\Gamma]$, then $a_1=a_2=0$. 
\end{lem}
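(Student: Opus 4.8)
The plan is to argue by contradiction. Suppose $a_1,a_2$ are not both zero and set $\psi:=a_1\ell^H_{\rho_1}+a_2\ell^H_{\rho_2}\colon[\Gamma]\to\mathbb Z$. I first record an elementary dichotomy: every subgroup of $(\mathbb R,+)$ is either cyclic, of the form $c\mathbb Z$, or dense. Since the Hilbert geodesic flow $\Phi^{\rho_i}$ is topologically mixing by \cite{BenoistI}, it is weakly mixing in the sense of \S\ref{ssec:independence}, so the periods $\{\ell^H_{\rho_i}([\gamma])\}_{[\gamma]}$ generate a non-cyclic, hence dense, subgroup $A_i\subseteq\mathbb R$. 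I will feed this density into every step.

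Next I would dispose of the degenerate cases. If $a_2=0$, then $a_1\neq0$ and $\ell^H_{\rho_1}([\gamma])\in\tfrac1{a_1}\mathbb Z$ for all $[\gamma]$, so $A_1\subseteq\tfrac1{a_1}\mathbb Z$ is cyclic, contradicting its density; the case $a_1=0$ is symmetric. Thus I may assume $a_1a_2\neq0$, which is the main case.

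For this case I pass to the dynamical picture of \S\ref{ssec:reparam}. On $T^1X_{\rho_1}$ the constant function $\mathbf 1\equiv1$ is a reparametrization function with periods $\lambda(\mathbf 1,\tau)=\ell^H_{\rho_1}([\gamma])$, while the function $f:=f^{\rho_2}_{\rho_1}$ of Lemma \ref{lem: rep} has periods $\ell^H_{\rho_2}([\gamma])$. The hypothesis says exactly that the H\"older function $F:=a_1\mathbf 1+a_2 f$ satisfies $\lambda(F,\tau)\in\mathbb Z$ for every periodic orbit $\tau$. The crux of the argument is to upgrade this ``integer period'' relation to a cohomological one and thereby force $f$ to be cohomologous to a positive multiple of $\mathbf 1$, i.e.\ $\ell^H_{\rho_2}=c\,\ell^H_{\rho_1}$ for some $c>0$. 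I expect this to be the main obstacle: one must rule out a genuinely arithmetic, lattice-type joint relation in which the vectors $(\ell^H_{\rho_1}([\gamma]),\ell^H_{\rho_2}([\gamma]))$ populate a discrete family of parallel lines $\{a_1x+a_2y\in\mathbb Z\}$ without the two spectra being proportional. I would establish the reduction by combining Liv\v sic's theorem (Remark \ref{rem Liv}) with the weak mixing of both flows: analysing the shifted family $F-s\mathbf 1$ together with the correction terms $\ell^H_{\rho_i}([\gamma^n\delta])-n\,\ell^H_{\rho_i}([\gamma])$, which converge to Busemann-type cocycle values, the density of the period groups $A_1,A_2$ should show that $\psi$ cannot be integer valued unless the $\mathbb R^2$-valued period cocycle $\bigl(\lambda(\mathbf 1,\cdot),\lambda(f,\cdot)\bigr)$ degenerates onto a line, that is, unless $\mathbf 1$ and $f$ are cohomologous up to scale. (Alternatively, one could phrase this degeneracy as the collapse of a Manhattan-type pressure function $P(-s\mathbf 1-tf)$ onto a line, contradicting its strict convexity.)

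Once proportionality $\ell^H_{\rho_2}=c\,\ell^H_{\rho_1}$ is in hand, I finish using rigidity. Scaling the length spectrum by $c$ scales the topological entropy by $c^{-1}$, so $h(\rho_2)=c^{-1}h(\rho_1)$ and the renormalized spectra coincide, $L^H_{\rho_2}=h(\rho_2)\ell^H_{\rho_2}=h(\rho_1)\ell^H_{\rho_1}=L^H_{\rho_1}$. By the rigidity theorem of Cooper--Delp and Kim \cite{CD_rigidity,Kim_rigidity} (see the remark after Theorem \ref{thm:main}), equality of renormalized spectra forces $\rho_2\in\{\rho_1,\rho_1^{*}\}$; since the contragredient involution preserves the Hilbert length by the computation in \S\ref{ssec:convexprojective}, in either case $\ell^H_{\rho_2}=\ell^H_{\rho_1}$, contradicting the assumption that $\rho_1$ and $\rho_2$ have distinct Hilbert length spectra. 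This contradiction shows $a_1=a_2=0$.
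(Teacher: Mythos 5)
Your handling of the degenerate cases ($a_1=0$ or $a_2=0$) via density of each individual period group, and your final step from $\ell^H_{\rho_2}=c\,\ell^H_{\rho_1}$ to a contradiction via entropy rescaling and the Cooper--Delp/Kim rigidity theorem, are both sound. The gap is exactly where you suspected it: the passage from ``$a_1\ell^H_{\rho_1}+a_2\ell^H_{\rho_2}$ is integer valued with $a_1a_2\neq 0$'' to ``$f^{\rho_2}_{\rho_1}$ is cohomologous to a constant multiple of $\mathbf 1$'' is not established, and the tools you propose cannot establish it. Weak mixing of each flow separately only gives density of the two projections $A_1,A_2\subseteq\mathbb R$; it says nothing about the closure of the subgroup of $\mathbb R^2$ generated by the pairs $\bigl(\ell^H_{\rho_1}([\gamma]),\ell^H_{\rho_2}([\gamma])\bigr)$, and it is precisely a proper closed subgroup of $\mathbb R^2$ containing these pairs (a family of parallel lines $\{a_1x+a_2y\in\mathbb Z\}$) that the hypothesis posits. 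Liv\v{s}ic's theorem converts the hypothesis into the statement that $a_1\mathbf 1+a_2f$ is ``dependent'' in the sense of the definition preceding Lemma \ref{thm:indep}, i.e.\ equal to $\frac{1}{2\pi i}\frac{u'}{u}$ for a circle-valued $u$; it does not convert it into a real coboundary, and no amount of one-dimensional spectral information about the two flows rules out such a circle-valued relation. Strict convexity of the Manhattan curve is likewise unavailable to you at this stage, since in the paper it is \emph{deduced from} this lemma (Theorem \ref{PropertyManhattan}(3)), so invoking it here would be circular.

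What actually closes the gap in the paper is an argument of a completely different, algebraic nature: one forms $\eta=\rho_1\times\rho_2$ into the product of the Zariski closures $\mathrm{G}_1\times\mathrm{G}_2$ (each factor being $\mathrm{PSL}(3,\mathbb R)$ or isomorphic to $\mathrm{PSO}(1,2)$ by Benoist, hence connected and simple). If $\eta(\Gamma)$ is Zariski dense in the product, Benoist's density theorem forces the joint Jordan projection $\lambda(\eta(\Gamma))$ to be dense in the product Cartan subspace, contradicting integrality of the nonzero functional $\phi^H_{a_1,a_2}$; if not, Goursat's lemma together with the classification of automorphisms of the simple factors forces $\rho_2$ to be conjugate to $\rho_1$ or $\rho_1^{*}$, contradicting the hypothesis of distinct Hilbert length spectra. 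This Zariski-density theorem is the genuinely two-dimensional equidistribution input that your dynamical sketch is missing, and I do not see how to replace it by Liv\v{s}ic's theorem plus weak mixing of the individual flows.
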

\begin{proof} Our proof follows from combining results of Benoist and an argument of Glorieux from \cite{Glo_lin_indep}. 

We prove this statement by contradiction. Consider the product representation $\eta=\rho_1\times\rho_2\colon \Gamma\to\rm G_1\times\rm G_2$ where $\rm G_i$ denotes the Zariski closure of $\rho_i$. Benoist \cite[Thm 1.3]{Benoist_cvxcones} proved that $\rm G_i$ is $\rm {PSL}(3,\bb R)$ or isomoprhic to $\rm{PSO}(1,2)$. Either way, this Zariski closure is connected and simple, so $\rm G_1\times\rm G_2$ is semi-simple. Choose a Cartan subspace of $\rm G_1\times\rm G_2$ such that
\[
\frak a\subseteq\{(\vec x,\vec y)\in\bb R^3\times\bb R^3\mid x_1+x_2+x_3=0=y_1+y_2+y_3\}
\]
and a positive Weyl chamber $\frak a^+$ contained in $\{(\vec x,\vec y)\in\frak a\mid x_1\geq x_2\geq x_3\text{ and }y_1\geq y_2\geq y_3\}$.
Denote by $\lambda\colon \rm G_1\times\rm G_2\to \frak a^+$ the corresponding Jordan projection
and by $\phi^H_{a_1,a_2}$ the non-zero linear functional $\phi^H_{a_1,a_2}(\vec x,\vec y)=a_1(x_1-x_3)+a_2(y_1-y_3)$ so that $\phi^H_{a_1,a_2}(\lambda(\eta(\gamma)))=a_1\ell^H_{\rho_1}([\gamma])+a_2\ell^H_{\rho_2}([\gamma])$.

Denote by $\rm H\subseteq \rm G_1\times\rm G_2$ the Zariski closure of $\eta(\Gamma)$.
As a first step, observe that $\rm H\neq \rm G_1\times\rm G_2$. Otherwise, the Proposition on page 2 of \cite{Benoist_propasy} implies directly that $\lambda(\eta(\Gamma))$ is dense (in the standard topology) in $\frak a$. We obtain a contradiction as we assumed $\phi^H_{a_1,a_2}(\lambda(\eta(\gamma)))\in\bb Z$ for all $\gamma\in\Gamma$ and $\phi^H_{a_1,a_2}$ is continuous.

Let $\pi_i\colon \rm H\to\rm G_i$ for $i=1,2$ denote the projection maps and note that $\pi_i(\eta(\Gamma))=\rho_i(\Gamma)$. In particular, $\pi_i$ is surjective since $\rm H$ and $\rm G_i$ are the Zariski closures of $\eta(\Gamma)$ and $\rho_i(\Gamma)$, respectively. Denote by $\rm N_1=\pi_2^{-1}(\text{id})$ (resp. $\rm N_2=\pi_1^{-1}(\text{id})$) the kernel of $\pi_2$ (resp. $\pi_1$) which is naturally identified with a normal subgroup of $\rm G_1$ (resp. $\rm G_2$) (note the indices in the definition of $\rm N_i$). Then, Goursat's lemma \cite[Thm 5.5.1]{Hall} states that the image of $\rm H$ in $\rm G_1/\rm N_1\times \rm G_2/\rm N_2$ is the graph of an isomorphism $\rm G_1/\rm N_1\cong \rm G_2/\rm N_2$. Since $\rm G_1$ is simple, then $\rm N_1=\{e\}$ or $\rm G_1$.

\emph{Case 1:} Suppose $\rm N_1=\rm G_1$. Then $\rm N_2=\rm G_2$ and $\rm H$ is the direct product $\rm G_1\times\rm G_2$, which is a contradiction.

\emph{Case 2:}  Suppose $\rm N_1=\{e\}$. Since $\rm G_2$ is simple, $\rm N_2=\{e\}$ and $\rm G_1\cong \rm G_2$. In other words, $\rm H$ is the graph of an automorphism $\iota\colon\rm G_1\to\rm G_2$. This induces an automorphism of the corresponding Lie algebras and, by the classification of their outer automorphisms given in \cite{LieClassification} (see also \cite[Theorem 11.9]{PressureMetric-MainPaper}), we deduce that $\rho_2$ is conjugated to either $\rho_1$ or $\rho_1^\ast$, which contradicts our hypothesis.
\end{proof}

Observe that Lemma \ref{thm:indep} readily implies that the geodesic flow of a convex projective structure is weakly mixing. Otherwise there exist $a\in\mathbb R$, $a\neq 0$ such that $a\ell^H_\rho([\gamma])\in\mathbb Z$ which directly contradicts Lemma \ref{thm:indep} with $a_1=a$, $a_2=0$, $\rho_1=\rho$ and $\rho_2\in\frak C(S)$ different from $\rho_1$ and $\rho_1^\ast$.

\section{The Correlation Theorem}\label{sec:corrthm}
In this section, we study the length spectra of two convex real projective structures simultaneously. 

This idea appeared first in \cite{CorrelationHyperbolic} for studying correlation of hyperbolic structures. We adapt their argument to the context of convex real projective structures. Theorem \ref{prop:lalley}, which was proved independently by Lalley and Sharp with slightly different conditions, gives the asymptotic formula for the number of closed orbits of an Axiom A flow under constraints. Anosov flows are important examples of Axiom A flows and Theorem \ref{prop:lalley} will be a crucial ingredient for our proof of Theorem \ref{thm:main}.

Fix a convex projective surface $\rho$. Let $f\colon T^{1}X_{\rho}\to\bb R$ be a H\"older continuous function and consider the function $t \to P(tf)$ for $t\in \mathbb{R}$, where $P$ denotes the pressure. This function is real analytic and strictly convex in $t$ when $f$ is not cohomologous to a constant \cite[Prop 4.12]{Pollicot_Zeta}. Its derivative satisfies
\begin{equation}\label{eq:deriv pressure}
P'(tf):=\frac{d}{dt}P(tf)=\int f d\mu_{tf},
\end{equation}
where $\mu_{tf}$ is the equilibrium state for $tf$. We denote by $J(f)$ the open interval of values $P'(tf)$. If $a \in J(f)$, we let $t_a \in\mathbb{R}$ be the unique real number for which $P'(t_a f)=\int f\mathrm{d}\mu_{t_af}=a$. We ease notation and set $\mu_a=\mu_{t_af}$.

The following is the key  result needed to establish our Theorem \ref{thm:main} (and Theorem \ref{thm:HitchinMain}).

\begin{thm}[{Lalley \cite[Theorem I]{Lalley}, Sharp \cite[Theorem 1]{PrimeOrbitThm}}]\label{prop:lalley}
Let $f: T^{1}X_{\rho} \to \mathbb{R}$ be an independent H\"older continuous function and let $a \in J(f)$. Then, for fixed $\eee >0$, there is a constant $C=C(f,\eee)$ such that $$ \#\{ \tau: \lambda(\tau) \in (x, x+\eee),\ \lambda(f, \tau) \in (ax, ax+\eee)\} \sim C \frac{\exp(h(\mu_a)x)}{x^{3/2}}.$$
\end{thm}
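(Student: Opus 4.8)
This is Theorem \ref{prop:lalley}, attributed to Lalley and Sharp, so the author will cite their papers rather than prove it. Nonetheless, here is how I would approach proving it from the thermodynamic formalism developed above.

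\medskip

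The plan is to reduce the two-constraint counting problem to a local central limit theorem via symbolic dynamics and a complex Ruelle transfer operator analysis. First I would pass from the Anosov flow $\Phi$ on $T^1X_\rho$ to a suspension over a subshift of finite type, using the Markov partition of Bowen \cite{Bowen_symbolicHyperFlow} cited earlier; the two quantities being constrained, namely the geometric period $\lambda(\tau)$ and the reparametrized period $\lambda(f,\tau)$, become Birkhoff sums of the roof function $r$ and of a H\"older function (cohomologous to the symbolic version of $f$) over periodic orbits of the shift. The independence hypothesis on $f$ is exactly what guarantees that the pair $(r, f)$ is not cohomologous to a lattice-valued function, so the relevant two-dimensional lattice/non-lattice dichotomy falls on the non-lattice side.

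The second step is to tilt the measure. Since $a\in J(f)$, I would select the parameter $t_a$ so that the equilibrium state $\mu_a=\mu_{t_a f}$ has $\int f\,\mathrm d\mu_a=a$, which is precisely the condition making the constraint $\lambda(f,\tau)\approx ax$ compatible with the constraint $\lambda(\tau)\approx x$ at exponential growth rate $h(\mu_a)$. After this change of measure, one centers the two Birkhoff sums and studies the joint distribution of the fluctuations. The heart of the argument is a \emph{local central limit theorem} for the pair of Birkhoff sums under $\mu_a$: the counting function should be governed by a two-dimensional Gaussian whose density, evaluated at the center, contributes the factor $x^{-1}$ from one direction (the period window of fixed width $\eee$) and an additional $x^{-1/2}$ from integrating over the transverse Gaussian, yielding the characteristic $x^{-3/2}$ in the denominator.

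The technical engine is the family of complex transfer operators $\mathcal L_{(s,w)}$ acting on a space of H\"older functions, with eigenvalue $e^{-P(\cdot)}$ perturbations analyzed via the analyticity and strict convexity of the pressure recorded in \eqref{eq:deriv pressure}. I would establish a spectral gap for the real operator, show the leading eigenvalue depends analytically on the two complex parameters $(s,w)$ dual to $(\lambda(\tau),\lambda(f,\tau))$, and—this is where independence is indispensable—verify that the operator has spectral radius strictly less than its real-axis value for all nonzero imaginary frequencies $(\sigma,\omega)$ with $\sigma,\omega$ not both zero. This \emph{Dolgopyat-type} non-lattice spectral estimate rules out the lattice obstruction and makes the relevant Fourier integrals decay, so that a Tauberian/contour-shift argument extracts the claimed asymptotic with the constant $C=C(f,\eee)$ absorbing the Gaussian normalization and the residue data.

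\medskip

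The main obstacle is the two-frequency Dolgopyat estimate: proving that the joint complex transfer operator is genuinely contracting away from the real axis requires more than the non-lattice condition on a single function, and it is exactly here that the hypothesis of \emph{independence} (rather than mere weak mixing of $\Phi$) does the essential work. Establishing this uniform spectral bound, and then converting it into the sharp $x^{-3/2}$ asymptotic rather than a weaker estimate, is the crux of Lalley's and Sharp's theorems.
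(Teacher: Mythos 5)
Your opening note is exactly right: the paper does not reprove this statement. Its entire ``proof'' consists of verifying the hypotheses of the cited theorems --- namely that the Hilbert geodesic flow on $T^1X_\rho$ is Anosov (by Benoist, hence Axiom A) and weakly mixing (which follows from the independence lemma, Lemma \ref{thm:indep}) --- and then invoking Lalley and Sharp, so your approach matches the paper's. Your supplementary sketch of the underlying argument (Markov coding, tilting to the equilibrium state $\mu_a$ with $\int f\,\mathrm{d}\mu_a=a$ so that the constrained orbits grow at rate $h(\mu_a)$, a joint local central limit theorem for the pair of Birkhoff sums producing the $x^{-3/2}$, and a non-lattice spectral estimate for the two-parameter complex transfer operator, with independence of $f$ ruling out the lattice degeneracy) is a faithful outline of how Lalley and Sharp actually prove the result. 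One small correction of emphasis: the spectral input needed there is not a Dolgopyat-type uniform high-frequency estimate; Lalley's argument runs through renewal theory and only requires that the complex transfer operator have spectral radius strictly below its real-axis value at each fixed nonzero frequency pair, which is precisely the content of the independence hypothesis, so the ``crux'' is lighter than your closing paragraph suggests.
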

\begin{proof}
Recall that the Hilbert geodesic flow on $T^{1}X_{\rho}$ is Anosov and, as pointed out at the end of section \ref{sec:prelim}, weakly mixing. Thus, we can apply Lalley and Sharp's results which hold for all weakly mixing Axiom A flows.
\end{proof}
\begin{rem}
The constant $C=C(f,\eee)>0$ has the same order of magnitude as $\eee^2$ and is related to $P''(t_af)$. See \cite[Thm 5]{Lalley} and \cite{CorrelationHyperbolic}.
\end{rem}

We introduce the concepts of \emph{pressure intersection} and \emph{renormalized pressure intersection} which we will need for the proof of Theorem \ref{thm:main}. 
\begin{defn} Let $\rho_1$ and $\rho_2$ be two convex projective structures and let $f\colon T^1X_{\rho_1}\to \bb R$ be a H\"older continuous reparametrization function. The \emph{pressure intersection} of $\rho_1$ and $\rho_2$ is 
\[
\mathbf{I}(\rho_1,\rho_2):=\int f \mathrm{d}\mu_{\Phi^{\rho_1}}
\]
where $\mu_{\Phi^{\rho_1}}$ is the measure of maximal entropy for $\Phi^{\rho_1}$. 
The \emph{renormalized pressure intersection} of $\rho_1$ and $\rho_2$   is
\[
\mathbf{J}(\rho_1,\rho_2):=\frac{h(\rho_2)}{h(\rho_1)}\mathbf{I}(\rho_1,\rho_2).
\]
\end{defn}
By Liv\v{s}ic's Theorem, the definitions of pressure intersection and renormalized pressure intersection do not depend on the choice of reparametrization function (see Remark \ref{rem Liv}).
\begin{prop}[{\cite[Proposition 3.8]{PressureMetric-MainPaper}}]\label{thm:Jrigidity} For every $\rho_1,\rho_2\in\frak C(S)$, the renormalized pressure intersection is such that 
\[
\mathbf{J}(\rho_1,\rho_2)\geq 1
\]
with equality if and only if $L^H_{\rho_1}=L^H_{\rho_2}$.
\end{prop}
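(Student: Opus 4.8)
The plan is to reduce the entire statement to the variational principle (Proposition \ref{prop defn_pressure}) applied to a suitably normalized reparametrization function. Write $h_i=h(\rho_i)$ and let $f=f^{\rho_2}_{\rho_1}$ be the reparametrization function from Lemma \ref{lem: rep}, so that $\lambda(f,\tau)=\ell^H_{\rho_2}([\gamma])$ on periodic orbits while $\lambda(\tau)=\ell^H_{\rho_1}([\gamma])$, and by the remark following Lemma \ref{lem:PressueZero} the flow $\Phi^f$ has entropy $h_2$. Lemma \ref{lem:PressueZero} then supplies the crucial normalization $P(-h_2 f)=0$. The first step is to feed the Bowen--Margulis measure $\mu_{\Phi^{\rho_1}}$, which realizes $h(\mu_{\Phi^{\rho_1}})=h_1$, into the variational characterization of this pressure: since
\[
0=P(-h_2 f)\geq h(\mu_{\Phi^{\rho_1}})-h_2\int f\,\mathrm d\mu_{\Phi^{\rho_1}}=h_1-h_2\,\mathbf{I}(\rho_1,\rho_2),
\]
we obtain $\mathbf{I}(\rho_1,\rho_2)\geq h_1/h_2$ and hence $\mathbf{J}(\rho_1,\rho_2)\geq 1$ at once.

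For the equality case, I would observe that $\mathbf{J}(\rho_1,\rho_2)=1$ holds precisely when the displayed inequality is an equality, i.e. when $\mu_{\Phi^{\rho_1}}$ attains the supremum defining $P(-h_2 f)$, so that it is an equilibrium state for $-h_2 f$. Since $-h_2 f$ is H\"older, its equilibrium state is unique (the first remark after Proposition \ref{prop defn_pressure}), while $\mu_{\Phi^{\rho_1}}$ is by definition the equilibrium state of the zero function. By Remark \ref{rem Liv}(2), two H\"older functions share an equilibrium state if and only if their difference is cohomologous to a constant; thus $\mathbf{J}(\rho_1,\rho_2)=1$ is equivalent to $h_2 f$ being cohomologous to some constant $C$.

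It then remains to identify $C$ and translate the cohomology condition into length spectra. I would pin down the constant via the pressure shift of Remark \ref{rem Liv}(2): if $-h_2 f$ is cohomologous to $-C$, then $0=P(-h_2 f)=P(-C)=P(0)-C=h_1-C$, forcing $C=h_1$. Finally, Liv\v{s}ic's theorem (Remark \ref{rem Liv}(1)) says $h_2 f$ is cohomologous to the constant $h_1$ if and only if $\lambda(h_2 f,\tau)=h_1\lambda(\tau)$ for every periodic orbit, that is $h_2\,\ell^H_{\rho_2}([\gamma])=h_1\,\ell^H_{\rho_1}([\gamma])$ for all $[\gamma]\in[\Gamma]$, which is exactly $L^H_{\rho_1}=L^H_{\rho_2}$. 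Conversely, starting from $L^H_{\rho_1}=L^H_{\rho_2}$, the same Liv\v{s}ic equivalence makes $h_2 f$ cohomologous to $h_1$; integrating against $\mu_{\Phi^{\rho_1}}$ (the coboundary contributes zero) yields $\int f\,\mathrm d\mu_{\Phi^{\rho_1}}=h_1/h_2$, whence $\mathbf{J}(\rho_1,\rho_2)=1$.

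I expect the only delicate point to be the bookkeeping in the equality case: correctly invoking uniqueness of the equilibrium state and then determining the cohomology constant $C=h_1$ through the pressure-shift identity, rather than any genuine estimate. The inequality $\mathbf{J}\geq 1$ itself is an immediate consequence of the variational principle combined with the normalization $P(-h_2 f)=0$.
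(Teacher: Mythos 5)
Your argument is correct, and it is essentially the standard proof: the paper itself gives no proof of this proposition but defers to \cite[Proposition 3.8]{PressureMetric-MainPaper}, where the same strategy is used — the normalization $P(-h(\rho_2)f)=0$ from Lemma \ref{lem:PressueZero} fed into the variational principle at the Bowen--Margulis measure for the inequality, and uniqueness of equilibrium states plus Liv\v{s}ic's theorem for the rigidity statement. Your identification of the cohomology constant $C=h(\rho_1)$ via the pressure-shift identity and the translation of the cohomology condition into $L^H_{\rho_1}=L^H_{\rho_2}$ are both handled correctly.
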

\begin{rem}\label{LengthComparisonPhenomenon}
Given two distinct elements $\rho_1, \rho_2$ in $\cal T(S)$, we can always find some free homotopy classes $[\gamma_1]$ and $[\gamma_2]$ so that $\ell_{\rho_1}([\gamma_1])>\ell_{\rho_2}([\gamma_1])$ and $\ell_{\rho_1}([\gamma_2])<\ell_{\rho_2}([\gamma_2])$. However, 
Tholozan \cite[Theorem B]{Tho_entropy} shows that there exist representations $\rho$ in $\frak C(S)$ which {\em dominate} a Fuchsian representation $j\in \cal T(S)$ in the sense that $\ell^{H}_{\rho}([\gamma]) \geq \ell_{j}([\gamma])$ for all $[\gamma]\in \pi_1(S)$ (see also Section \ref{sec:applications}). However, Proposition \ref{thm:Jrigidity} implies that whenever $\rho_1$ and $\rho_2$ in $\frak C(S)$ have distinct renormalized Hilbert length spectra we can always find some free homotopy classes $[\gamma_1]$ and $[\gamma_2]$ so that $L^H_{\rho_1}([\gamma_1])>L^H_{\rho_2}([\gamma_1])$ and $L^H_{\rho_1}([\gamma_2])<L^H_{\rho_2}([\gamma_2])$.
This motivates our focus on the renormalized Hilbert length as in the proof of the correlation theorem \ref{thm:main} below.
 \end{rem}

Now we are ready to prove our main theorem stated in the introduction and repeated below for the convenience of the reader.

\medskip\noindent
{\textbf{Theorem} \ref{thm:main}\textbf{.}} {\em Fix a precision $\eee>0$. Consider two convex projective structures $\rho_1$ and $\rho_2$ on a surface $S$ with distinct renormalized Hilbert length spectra $L_{\rho_1}^{H}\neq L_{\rho_2}^{H}$. There exist constants $C=C(\eee, \rho_1,\rho_2)>0$ and $M=M(\rho_1,\rho_2) \in (0,1)$ such that 
\[
\# \Big\{[\gamma]\in[\Gamma]\mid L_{\rho_1}^{H}([\gamma]) \in \big(x,x+h(\rho_1)\eee\big),\  L_{\rho_2}^{H}([\gamma]) \in \big(x, x+h(\rho_2)\eee\big)\Big\} \sim C \frac{e^{Mx}}{x^{3/2}}
\]
where $f(x)\sim g(x)$ means $f(x)/g(x) \to 1$ as $x \to \infty$.}

\begin{proof} This proof is inspired by the proof for hyperbolic surfaces in \cite{CorrelationHyperbolic}. Our first goal is to show that, for the reparametrization function $f_{\rho_1}^{\rho_2}$ described in Lemma \ref{lem: rep}, the value $a$ in Theorem \ref{prop:lalley} can be chosen to be $\frac{h(\rho_1)}{h(\rho_2)}$. In order to ease notations, we set $f=f_{\rho_1}^{\rho_2}$ and we write $\Phi$ for the Hilbert geodesic flow $\Phi^{\rho_1}$ on $T^{1}X_{\rho_1}$. 

By Lemma \ref{lem:PressueZero}, we have that
\[
0=P(-h(\rho_2)f)= h\big(\mu_{-h(\rho_2)f}\big)- \int h(\rho_2) f \mathrm{d} \mu_{-h(\rho_2) f} \]
where $\mu_{-h(\rho_2)f}$ is the equilibrium state of $-h(\rho_2)f$. Hence 
\[
h(\rho_2) \int f \mathrm{d}\mu_{-h(\rho_2) f}=h(\mu_{-h(\rho_2) f})
\]
and 
\begin{equation}\label{eqn:upper}
\int f \mathrm{d}\mu_{-h(\rho_2) f}= \frac{h(\mu_{-h(\rho_2)f})}{h(\rho_2)} \leq  \frac{h(\rho_1)}{h(\rho_2)}.
\end{equation}
Notice the equality can be attained only when $\mu_{-h(\rho_2)f}=\mu_{\Phi}$, where $\mu_{\Phi}$ is the measure of maximal entropy for the geodesic flow $\Phi$. This happens only if $-h(\rho_2)f$ is cohomologous to a constant which, by integrating, implies the length spectrum $\ell_{\rho_1}^{H}$ is a multiple of $\ell_{\rho_2}^{H}$. This is impossible by Lemma \ref{thm:indep} and therefore the inequality is strict.

On the other hand, by Propositon \ref{thm:Jrigidity}, we have that
\begin{align*}
1 \leq \mathbf{J}(\rho_1,\rho_2) &= \frac{h(\rho_2)}{h(\rho_1)} \mathbf{I}(\rho_1,\rho_2)=\frac{h(\rho_2)}{h(\rho_1)} \int f\mathrm{d}\mu_{\Phi}.
\end{align*}
where $\mu_{\Phi}$ is the Bowen-Margulis measure of $\Phi$. This yields
\begin{equation}\label{eqn:lower}
\int f \mathrm{d}\mu_{\Phi} \geq \frac{h(\rho_1)}{h(\rho_2)}. 
\end{equation}
Moreover, the equality is attained only when $h(\rho_2)f$ is cohomologous to $h(\rho_1)$ which is impossible given our hypotheses.

Combining the inequalities (\ref{eqn:upper}) and (\ref{eqn:lower}), together with the fact that $J(f)$ is an open interval, we conclude that $\left(\frac{h(\rho_1)}{h(\rho_2)}-\delta, \frac{h(\rho_1)}{h(\rho_2)} 
+ \delta\right) \subset J(f)$ for small $\delta>0$. In particular, because $P'(tf)$ is a strictly increasing continuous function, there exists some $t_{a_0}\in (-h(\rho_2),0)$ such that $a_0=P'(t_{a_0}f)=\frac{h(\rho_1)}{h(\rho_2)}$, as desired.

We now show that $0<h(\mu_{a_0}) <h(\rho_1)$. Because the Hilbert geodesic flow is Anosov, it has positive entropy with respect to any equilibrium state of a H{\"o}lder reparametrization, so $0<h(\mu_{a_0})\leq h(\mu_{\Phi})=h(\rho_1)$ and the equality occurs if and only if $\mu_{a}=\mu_{\Phi}$ is the measure of maximal entropy. But as shown above,
\[
\int f \mathrm{d}\mu_{\Phi} > \frac{h(\rho_1)}{h(\rho_2)}=\int f \mathrm{d}\mu_{a_0}.
\]
This shows that $\mu_{a_0}$ can not be the Bowen-Margulis measure and hence $h(\mu_{a_0}) <h(\rho_1)$.

Thanks to Lemma \ref{thm:indep} and Theorem \ref{prop:lalley}, we can conclude that for $\rho_1$, $\rho_2$ convex projective structures such that $\rho_2\neq\rho_1,\rho_1^\ast$, there exists $\wt C=\wt C(\rho_1,\rho_2,\eee)$ such that
\[
\#\left\{[\gamma]\in[\Gamma]\colon \ell_{\rho_1}^H([\gamma])\in (y,y+\eee),\ \ell_{\rho_2}^H([\gamma])\in \left(\frac{h(\rho_1)}{h(\rho_2)}y,\frac{h(\rho_1)}{h(\rho_2)}y+\eee\right)\right\}\sim \wt C \frac{\exp(h(\mu_{a_0})y)}{y^{3/2}}.
\]
Setting $x=h(\rho_1)y$ and clearing denominators, we have the desired statement with
$C=h(\rho_1)^{3/2}\wt C$ and $M=\frac{h(\mu_{a_{0}})}{h(\rho_1)}\in{(0,1)}$.
\end{proof}
\section{Correlation number, Manhattan curve, and Decay of Correlation number}\label{sec:corrnum}
In this section we focus on the correlation number $M(\rho_1,\rho_2)$ from Theorem \ref{thm:main}. In Theorem \ref{thm:Manhattan} we express the correlation number in terms of Burger's \emph{Mahnattan curve} \cite{Burger_Manhattan} generalizing the main result in \cite{Sharp_Manhattan}. In \S\ref{ssec:decay} we prove that the correlation number is \emph{not} uniformly bounded away from zero in $\frak C(S)$. Specifically, we provide two sequences of \emph{hyperbolic structures} along which the correlation number goes to zero, thus answering a question from \cite{CorrelationHyperbolic}.
\subsection{Manhattan curve and Correlation number}
Let $\rho_1$ and $\rho_2$ be two convex projective structures. The \emph{Manhattan curve of $\rho_1,\rho_2$} is the curve $\cal C(\rho_1,\rho_2)$ that bounds the convex set 
\[
\bigg\{(a,b) \in \bb R^2: \sum_{[\gamma]\in[\Gamma]} e^{-\big(a\ell^{H}_{\rho_1}([\gamma])+b\ell^{H}_{\rho_2}([\gamma])\big)}< +\infty\bigg\}
\]
Equivalently (see \cite{Sharp_Manhattan}), the Manhattan curve can be defined in terms of the pressure function and the reparametrization function $f=f_{\rho_1}^{\rho_2}: T^{1}X_{\rho_1} \to \mathbb{R}$ from Lemma \ref{lem: rep} as
\[
\cal C(\rho_1,\rho_2)=\bigg\{(a,b)\in\bb R^2\colon P(-a-bf)=0\bigg\}=\bigg\{(a,b)\in\bb R^2\colon P(-bf)=a\bigg\}.
\]

The next theorem collects the  properties of the Manhattan curve we will need which were first discussed in the setting of representations into isometry groups of rank one symmetric spaces (see \cite[Theorem 1]{Burger_Manhattan}).

\begin{thm}  Let $\rho_1,\rho_2\in\frak C(S)$ denote two convex projective surfaces.
\label{PropertyManhattan}
\begin{enumerate}
    \item The Manhattan curve is a real analytic convex curve passing through the points $(h(\rho_1),0)$ and $(0,h(\rho_2))$. 
    \item The normals to the Manhattan curve at the points $(h(\rho_1),0)$ and $(0, h(\rho_2))$ have slopes $\mathbf{I}(\rho_1, \rho_2)$ and $1/\mathbf{I}(\rho_2, \rho_1)$, respectively.
    \item The Manhattan curve is strictly convex if and only if $\rho_2\neq \rho_1, \rho_1^\ast$.
\end{enumerate}
\end{thm}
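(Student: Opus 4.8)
The plan is to realize the Manhattan curve as a graph over the $b$-axis and to translate the three assertions into properties of the pressure function $t\mapsto P(tf)$ for $f=f_{\rho_1}^{\rho_2}$. Since $\int c\,\mathrm{d}\mu=c$ for every probability measure, the variational principle (Proposition \ref{prop defn_pressure}) gives $P(\psi+c)=P(\psi)+c$ for every constant $c$; combined with the defining relation $P(-a-bf)=0$ this rewrites the curve as the graph $b\mapsto(g(b),b)$ with $g(b):=P(-bf)$, and comparison with the Poincar\'e-series description shows it bounds the convex region $\{(a,b):a\ge g(b)\}$. For part (1), the affine substitution $t=-b$ transports the real analyticity and strict convexity of $t\mapsto P(tf)$ from \cite[Prop 4.12]{Pollicot_Zeta} to $g$, so the curve is real analytic and convex. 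The two intercepts are then immediate: $g(0)=P(0)=h(\rho_1)$ gives the point $(h(\rho_1),0)$, while $g(b)=0$ forces $b=h(\Phi^f)=h(\rho_2)$ by Lemma \ref{lem:PressueZero}, giving $(0,h(\rho_2))$.

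For part (2) I would differentiate. By \eqref{eq:deriv pressure}, $g'(b)=-\int f\,\mathrm{d}\mu_{-bf}$; at $b=0$ the equilibrium state $\mu_0$ is the measure of maximal entropy $\mu_{\Phi^{\rho_1}}$, so $g'(0)=-\mathbf{I}(\rho_1,\rho_2)$. As the tangent to the graph at $(h(\rho_1),0)$ has direction $(g'(0),1)$, the normal there has slope $-g'(0)=\mathbf{I}(\rho_1,\rho_2)$. For the intercept $(0,h(\rho_2))$ I would exploit the symmetry of the construction: interchanging $a$ and $b$ in the Poincar\'e series yields $\cal C(\rho_2,\rho_1)=\sigma\big(\cal C(\rho_1,\rho_2)\big)$, where $\sigma(a,b)=(b,a)$ is the reflection across the diagonal. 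This reflection sends $(0,h(\rho_2))$ to the first intercept $(h(\rho_2),0)$ of $\cal C(\rho_2,\rho_1)$; since $\sigma$ carries a line of slope $m$ to one of slope $1/m$, the normal slope $\mathbf{I}(\rho_2,\rho_1)$ there (by the case already treated) transforms into the asserted slope $1/\mathbf{I}(\rho_2,\rho_1)$ at $(0,h(\rho_2))$.

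Part (3) is where I expect the genuine difficulty. The graph is strictly convex precisely when $g$ is, which by \cite[Prop 4.12]{Pollicot_Zeta} holds if and only if $f$ is \emph{not} Liv\v{s}ic-cohomologous to a constant; by Liv\v{s}ic's theorem (Remark \ref{rem Liv}) this is equivalent to the Hilbert length spectra not being proportional, i.e. $\ell^H_{\rho_2}\neq c\,\ell^H_{\rho_1}$ for all $c$. The crux is to match proportionality with $\rho_2\in\{\rho_1,\rho_1^\ast\}$. If $\ell^H_{\rho_2}=c\,\ell^H_{\rho_1}$ with $c\neq 1$, the spectra are distinct, so the Independence Lemma (Lemma \ref{thm:indep}) applied to $(a_1,a_2)=(c,-1)$ --- whose combination is identically zero, hence integral, while $a_2\neq 0$ --- gives a contradiction; thus the only admissible constant is $c=1$. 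Equal Hilbert spectra force equal entropies, whence $L^H_{\rho_1}=L^H_{\rho_2}$, and the rigidity of Cooper--Delp and Kim \cite{CD_rigidity,Kim_rigidity} then yields $\rho_2=\rho_1$ or $\rho_2=\rho_1^\ast$. Conversely, if $\rho_2=\rho_1$ or $\rho_2=\rho_1^\ast$ then $\ell^H_{\rho_2}=\ell^H_{\rho_1}$ (the contragredient preserves the Hilbert length, as computed in \S\ref{ssec:convexprojective}), so $f$ is cohomologous to $1$ and the curve degenerates to a straight segment. The principal obstacle is thus this final equivalence --- invoking the independence lemma to exclude a nontrivial proportionality constant and the spectral rigidity to settle $c=1$ --- whereas the remaining assertions follow formally from the analyticity and convexity of the pressure.
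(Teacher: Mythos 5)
Your proposal is correct and follows essentially the same route as the paper: realize the curve as the graph of $b\mapsto P(-bf)$, read off the intercepts from $P(0)=h(\rho_1)$ and Lemma \ref{lem:PressueZero}, differentiate via \eqref{eq:deriv pressure} and use the $\sigma$-symmetry for the second normal, and reduce item (3) to whether $f$ is cohomologous to a constant. The only (immaterial) deviation is in item (3), where the paper routes through the endpoint normals and Proposition \ref{thm:Jrigidity} while you invoke the strict-convexity criterion for $t\mapsto P(tf)$ directly; both arguments rest on the same inputs, namely Lemma \ref{thm:indep} to exclude a proportionality constant $c\neq 1$ and the length-spectrum rigidity to settle the case $c=1$.
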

\begin{proof} Real analyticity and convexity of the Manhattan curve follow from its definition and real analyticity of the pressure function. The fact that $\mathcal C(\rho_1,\rho_2)$ passes through $(h(\rho_1),0)$ and $(0,h(\rho_2))$ follows from Lemma \ref{lem:PressueZero} and the definition of topological entropy.  To see that the normal to the Manhattan curve at $(h(\rho_1),0)$ has slope ${\bf I}(\rho_1,\rho_2)$, observe that the Manhattan curve is the graph $\cal C(\rho_1,\rho_2)$ of a real analytic function $q$ such that $P(-q(s)f)=s$. Then, differentiate the equality $P(-q(s)f)=s$ using equation (\ref{eq:deriv pressure}). The rest of item (2) follows by symmetry. Item (3) follows from item (2), Theorem \ref{thm:indep} and Proposition \ref{thm:Jrigidity}.
\end{proof}

Sharp \cite{Sharp_Manhattan} expressed the correlation number $M(\rho_1,\rho_2)$ in terms of the Manhattan curve for hyperbolic structures. We establish an analogous result for convex real projective structures.
\begin{thm}\label{thm:Manhattan} Let $\rho_1$ and $\rho_2$ be convex projective structures with distinct renormalized Hilbert length spectra.  Then, their correlation number can be written as
\[
M(\rho_1,\rho_2)=\frac{a}{h(\rho_1)}+\frac{b}{h(\rho_2)}
\]
where $(a,b)\in \cal C(\rho_1,\rho_2)$ is the point on the Manhattan curve at which the tangent line is parallel to the line passing through $(h(\rho_1),0)$ and $(0,h(\rho_2))$. See Figure \ref{fig:Manhattan}.
\end{thm}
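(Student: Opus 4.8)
The plan is to identify the tangency point described in the statement with the data that governs the correlation number in the proof of Theorem \ref{thm:main}. There we found that $M(\rho_1,\rho_2)=h(\mu_{a_0})/h(\rho_1)$, where $f=f_{\rho_1}^{\rho_2}$, the value is $a_0=h(\rho_1)/h(\rho_2)$, and $\mu_{a_0}=\mu_{t_{a_0}f}$ is the equilibrium state of $t_{a_0}f$ for the unique $t_{a_0}\in(-h(\rho_2),0)$ satisfying $P'(t_{a_0}f)=\int f\,\mathrm{d}\mu_{t_{a_0}f}=a_0$. I would aim to show that the tangency point $(a,b)$ in the statement is exactly $b=-t_{a_0}$ and $a=P(t_{a_0}f)$; granting this, the claimed identity reduces to a short computation.

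First I would compute the slope of the Manhattan curve. Writing it as the graph $a=P(-bf)$ and differentiating via equation (\ref{eq:deriv pressure}) gives $\frac{\mathrm{d}a}{\mathrm{d}b}=-\int f\,\mathrm{d}\mu_{-bf}$, so the tangent at the point with parameter $b$ has slope $-1/\int f\,\mathrm{d}\mu_{-bf}$ in the $(a,b)$-plane. The chord through $(h(\rho_1),0)$ and $(0,h(\rho_2))$ has slope $-h(\rho_2)/h(\rho_1)$, so the parallelism condition becomes $\int f\,\mathrm{d}\mu_{-bf}=h(\rho_1)/h(\rho_2)=a_0$. Since $\rho_2\neq\rho_1,\rho_1^\ast$, Lemma \ref{thm:indep} guarantees that $f$ is not cohomologous to a constant, hence $t\mapsto P'(tf)$ is strictly increasing; comparing $\int f\,\mathrm{d}\mu_{-bf}=a_0=\int f\,\mathrm{d}\mu_{t_{a_0}f}$ then forces $-b=t_{a_0}$. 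This yields $b=-t_{a_0}\in(0,h(\rho_2))$ and $a=P(t_{a_0}f)$, with uniqueness of the tangency point guaranteed by strict convexity of the curve (Theorem \ref{PropertyManhattan}(3)).

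The final step would be to evaluate $a/h(\rho_1)+b/h(\rho_2)$. Using the variational principle (Proposition \ref{prop defn_pressure}) at the equilibrium state $\mu_{a_0}$, one has $a=P(t_{a_0}f)=h(\mu_{a_0})+t_{a_0}a_0=h(\mu_{a_0})+t_{a_0}h(\rho_1)/h(\rho_2)$, so that
\[
\frac{a}{h(\rho_1)}+\frac{b}{h(\rho_2)}=\frac{h(\mu_{a_0})}{h(\rho_1)}+\frac{t_{a_0}}{h(\rho_2)}-\frac{t_{a_0}}{h(\rho_2)}=\frac{h(\mu_{a_0})}{h(\rho_1)}=M(\rho_1,\rho_2).
\]
I expect the main obstacle to be bookkeeping rather than anything conceptual: one must pin down the coordinate conventions on the Manhattan curve and convert ``tangent parallel to the chord'' into the equilibrium-state condition $\int f\,\mathrm{d}\mu_{-bf}=a_0$ correctly, taking care with the signs coming from the parametrization $a=P(-bf)$. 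Once the tangency parameter is matched with $t_{a_0}$, the variational principle closes the argument in one line.
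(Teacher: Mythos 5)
Your proposal is correct and follows essentially the same route as the paper's proof: both identify the tangency point with $(P(t_{a_0}f),-t_{a_0})$ by differentiating the implicit equation for the Manhattan curve, and both close the argument with the variational principle $P(t_{a_0}f)=h(\mu_{a_0})+t_{a_0}h(\rho_1)/h(\rho_2)$. The only cosmetic difference is direction: the paper starts from the point $q(s)=-t_{a_0}$ and verifies the tangent is parallel to the chord, while you start from the parallelism condition and recover $b=-t_{a_0}$ via strict monotonicity of $t\mapsto P'(tf)$.
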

\begin{figure}[!htb]
    \centering
    \includegraphics[width=.5\textwidth]{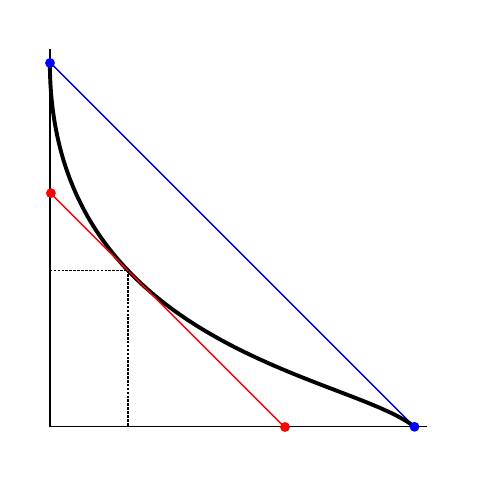}
    \put (-544,16){\makebox[0.7\textwidth][r]{\small{$0$}}}
    \put (-500,18){\makebox[0.7\textwidth][r]{\small{$a$}}}
    \put (-544,99){\makebox[0.7\textwidth][r]{\small{$b$}}}
    \put (-544,201){\makebox[0.7\textwidth][r]{\small{$h(\rho_2)$}}}
    \put (-350,16){\makebox[0.7\textwidth][r]{\small{$h(\rho_1)$}}}
    \caption{The Manhattan curve and the point $(a,b)$ described in Theorem \ref{thm:Manhattan}}
    \label{fig:Manhattan}
\end{figure}    
\begin{proof} By the proof of Theorem \ref{thm:main}, the correlation number is such that 
$h(\rho_1)M(\rho_1,\rho_2)=h(\mu_{a_0})$, where $a_0=\int f \mathrm{d}\mu_{a_0}=\frac{h(\rho_1)}{h(\rho_2)}$. By definition,
\[
h(\mu_{a_0})=P(t_{a_0}f)-t_{a_0}\frac{h(\rho_1)}{h(\rho_2)}.
\]
Note that $\cal C(\rho_1,\rho_2)$ is the graph of a real analytic function $q$ defined implicitly as $P(-q(s)f)=s$. Setting $q(s)=-t_{a_0}$, it follows that 
\begin{align*}
M(\rho_1,\rho_2)=\frac{h(\mu_{a_0})}{h(\rho_1)}=\frac{P(-q(s)f)}{h(\rho_1)}+\frac{q(s)}{h(\rho_2)}=\frac{s}{h(\rho_1)}+\frac{q(s)}{h(\rho_2)}.
\end{align*}

Moreover, observe that
\[
1=\frac{d}{ds}P(-q(s)f)=\left(-\int f\mathrm{d}\mu_{-q(s)f}\right)q'(s).
\]
We conclude by recalling that
$\int f d\mu_{t_{a_0}f}=\frac{h(\rho_1)}{h(\rho_2)}$ and that the line passing through $(h(\rho_1),0)$ and $(0,h(\rho_2))$ has slope $-\frac{h(\rho_2)}{h(\rho_1)}$.
\end{proof}
\begin{rem}
It follows from strict convexity of the Manhattan curve and Theorem \ref{thm:Manhattan} that $M(\rho_1,\rho_2)\in (0,1)$. This fact is independently proved in Theorem \ref{thm:main}.
\end{rem}
\subsection{Decay of correlation number}\label{ssec:decay}
This section is dedicated to the proof of Theorem \ref{thm:pinching} from the introduction, which we restate here for the convenience of the reader. 

\medskip\noindent
{\textbf{Theorem} \ref{thm:pinching}\textbf{.}} {\em There exist sequences $(\rho_n)_{n=1}^\infty$ and $(\eta_n)_{n=1}^\infty$ in the Teichm{\"u}ller space $\cal T(S)$ such that the correlation numbers $M(\rho_n,\eta_n)$ satisfy}
\[
\lim_{n\to\infty} M(\rho_n,\eta_n)=0.
\]
\begin{proof} We construct two special families of hyperbolic structures $\rho_n$, $\eta_n$ and consider their corresponding geodesic currents $\nu_{\rho_n}$, $\nu_{\eta_n}$ as in section \ref{ssec:currents}. Our proof proceeds in two steps. First, we take geodesic currents $\nu_{\rho_n}+\nu_{\eta_n}$ given by the sum of the currents of $\rho_n$ and $\eta_n$ and show that their exponential growth rates satisfy $\lim\limits_{n\to\infty} h(\nu_{\rho_n}+\nu_{\eta_n})=0$. Then we show that this condition implies that the correlation number $M(\rho_n,\eta_n)$ goes to zero as well.
\smallskip

We consider two filling pair-of-pants decomposition $(\alpha_i)$ and $(\beta_i)$ on a hyperbolic structure $\rho_0$. A family of simple closed curves is filling if the complement of their union consists of topological discs. We take $(\rho_n)_{n=1}^\infty$ to be a sequence of hyperbolic structures obtained by pinching all $\alpha_i$ on $\rho_0$ so that the hyperbolic length $\ell_{\rho_n}(\alpha_i)=\epsilon_n$ with $\epsilon_n \to 0$ when $n\to \infty$. Similarly, we take $(\eta_n)$ to be another sequence of hyperbolic structures obtained by pinching all $\beta_i$ on $\rho_0$ so that the hyperbolic length $\ell_{\eta_n}(\beta_i)=\epsilon_n$. Note in such cases, we have that $\ell^H_{\rho_n}=\ell_{\rho_n}$ and $\ell^H_{\eta_n}=\ell_{\eta_n}$ and that the topological entropy $h(\rho_n)=h(\eta_n)=1$ for all $n$. We now proceed to prove $\lim\limits_{n\to\infty} h(\nu_{\rho_n}+\nu_{\eta_n})=0$.

By definition, the systole of $\nu_{\rho_n}+\nu_{\eta_n}$ is equal to $L_n=\inf\limits_{[\gamma]\in[\Gamma]}\left\{\ell^H_{\rho_n}([\gamma])+\ell^H_{\eta_n}([\gamma])\right\}$. 
Note that for a fixed $n$, $\#\{[\gamma]\in[\Gamma]\mid \ell^H_{\rho_n}([\gamma])+\ell^H_{\eta_n}([\gamma])<T\}<\infty$ for all $T>0$. In other words, the geodesic current $\nu_{\rho_n}+\nu_{\eta_n}$ is period minimizing and so we can apply Theorem \ref{thm:entropysystole} to find a constant $C$ depending only on the topology of $S$ such that
\[
h(\nu_{\rho_n}+\nu_{\eta_n})\leq \frac{C}{L_n}.
\]

Therefore to show $\lim\limits_{n\to\infty}h(\nu_{\rho_n}+\nu_{\eta_n})=0$, it suffices to show that $\lim\limits_{n\to\infty}L_n=\infty$. Because of the filling condition, the geodesic representative of any $[\gamma]\in [\Gamma]$ must intersect either curves in $(\alpha_i)$ or curves in $(\beta_i)$. By the Collar Lemma, each geodesic representative of $\alpha_i$ (resp. $\beta_i$) for $\rho_n$ (resp. $\eta_n$) is enclosed in a standard collar neighborhood of width approximately $\log\Big(\frac{1}{\epsilon_n}\Big)$.  In particular, every closed curve traverses a collar neighborhood of 
width approximately $\log\Big(\frac{1}{\epsilon_n}\Big)$ for the hyperbolic metric $\rho_n$ or the hyperbolic metric $\eta_n$ which implies that $\lim\limits_{n\to\infty}L_n=\infty$.

We now show that $\lim\limits_{n\to\infty} h(\nu_{\rho_n}+\nu_{\eta_n})=0$ implies that the correlation number goes to zero as well. Consider the reparametrization functions $f^{\eta_n}_{\rho_n}$ as in Lemma \ref{lem: rep} and notice that $\lambda(1+f^{\eta_n}_{\rho_n},\tau)=\ell^{H}_{\rho_n}([\gamma])+\ell^{H}_{\eta_n}([\gamma])$ for every periodic orbit corresponding to $[\gamma]\in[\Gamma]$. By Lemma \ref{lem:PressueZero}, for all $n$
\[
P(-h(\nu_{\rho_n}+\nu_{\eta_n})-h(\nu_{\rho_n}+\nu_{\eta_n})f^{\eta_n}_{\rho_n})=P(-h(\nu_{\rho_n}+\nu_{\eta_n})(1+f^{\eta_n}_{\rho_n}))=0.
\]
We have that $(h(\nu_{\rho_n}+\nu_{\eta_n}),h(\nu_{\rho_n}+\nu_{\eta_n}))\in\cal C(\rho_n,\eta_n)$ thanks to the characterization of the Manhattan curve in terms of the pressure function. If the line $y+x=2h(\nu_{\rho_n}+\nu_{\eta_n})$ is tangent to the Manhattan curve $\cal C(\rho_n,\eta_n)$, then by Theorem \ref{thm:Manhattan}, we obtain $M(\rho_n,\eta_n)=2h(\nu_{\rho_n}+\nu_{\eta_n})$. If not,  
then the line $y+x=2h(\nu_{\rho_n}+\nu_{\eta_n})$ must intersect $\cal C(\rho_n,\eta_n)$ at two points. See Figure \ref{fig:pinching}. Now by the mean value theorem and the convexity of the Manhattan curve, we have from Theorem \ref{thm:Manhattan} that there exists $0<a_n,b_n<2h(\nu_{\rho_n}+\nu_{\eta_n})$ such that
\[
0<M(\rho_n,\eta_n)=a_n+b_n<2h(\nu_{\rho_n}+\nu_{\eta_n}).
\]
This immediately implies that $M(\rho_n,\eta_n)$ goes to zero as $n$ goes to infinity.
\end{proof}
\begin{figure}[!htb]
    \centering
    \includegraphics[width=.5\textwidth]{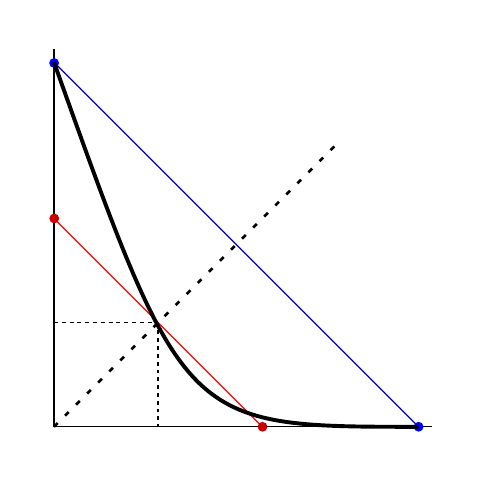}
    \put (-540,20){\makebox[0.7\textwidth][r]{\small{$0$}}}
    \put (-465,19){\makebox[0.7\textwidth][r]{\footnotesize{$h(\nu_{\rho_n}+\nu_{\eta_n})$}}}
    \put (-395,18){\makebox[0.7\textwidth][r]{\footnotesize{$2h(\nu_{\rho_n}+\nu_{\eta_n})$}}}
    \put (-542,127){\makebox[0.7\textwidth][r]{\footnotesize{$2h(\nu_{\rho_n}+\nu_{\eta_n})$}}}
    \put (-542,76){\makebox[0.7\textwidth][r]{\footnotesize{$h(\nu_{\rho_n}+\nu_{\eta_n})$}}}
    \put (-543,203){\makebox[0.7\textwidth][r]{\small{$1$}}}
    \put (-359,16){\makebox[0.7\textwidth][r]{\small{$1$}}}
    \caption{The Manhattan curve and the point $(h(\nu_{\rho_n}+\nu_{\eta_n}),h(\nu_{\rho_n}+\nu_{\eta_n}))$ described in the proof of Theorem \ref{thm:pinching}}
    \label{fig:pinching}
\end{figure}
\section{The renormalized Hilbert length along cubic rays}\label{sec:applications}
\subsection{Cubic rays and affine spheres}
Building on work of Hitchin \cite{Hit_topol}, Labourie \cite{Labourie_FlatProjective} and Loftin \cite{Loftin_AffineSpheres} have independently parametrized $\frak C(S)$ as the vector bundle of holomorphic cubic differentials over $\cal T(S)$. We briefly recall this parametrization since we will use it in this subsection to study the correlation number. The reader can refer to \cite{Labourie_FlatProjective} and \cite{Loftin_AffineSpheres} for a detailed construction. 
This parametrization shows that the space of pairs formed by a complex structure $J$ on $S$ and a $J$-holomorphic cubic differential $q$ is in one-to-one correspondence with the space of convex real projective structures on $S$.

Because of the classical correspondence between hyperbolic structures and complex structures, we will sometimes blur the difference between a Riemann surface $X$ and a hyperbolic structure $X$. We recall that a holomorphic cubic differential $q$ is a holomorphic section of $K\otimes K \otimes K$ where $K$ is the canonical line bundle of a Riemann surface $X$. Locally, a holomorphic cubic differential can be written in complex coordinate charts as $q=q(z)dz\otimes dz \otimes dz$ (often written as $q(z)dz^3$) with $q(z)$ holomorphic. The cubic differential $q$ is invariant under change of coordinates, meaning that if we pick a different coordinate chart $w$, then $q(w)dw^3=q(z)dz^3$. The hyperbolic metric $\sigma$ that corresponds to the complex structure of $X$ induces a Hermitian metric $\langle\cdot,\cdot\rangle$ on the space of holomorphic cubic differentials on $X$. The {\em $L^2$-norm of a cubic differential $q$} is defined as
$$\|q\|_{X}=\int_{X}\langle q,q \rangle \rm{d}\sigma.$$

The correspondence between the space of pairs of complex structures on $S$ and holomorphic cubic differentials on associated Riemann surfaces with the set of convex real projective structures on $S$ goes through a geometrical object invariant under affine transformations, called the affine sphere.
We briefly describe the relation. Consider a pair $(J, q)$, where $J$ is a complex structure and $q$ is a holomorphic cubic differential on the Riemann surface $X_J$ associated to $J$. Then one can obtain an affine sphere, which is a hypersurface in $\mathbb{R}^3$ that is invariant under affine transformations, by solving a second order elliptic PDE called Wang's equation (see \cite[Section 4]{Loftin_AffineSpheres}). The affine sphere can be projected to $\mathbb{RP}^2$ so to obtain the developing image  $\Omega_{\rho}$ of some convex projective structure $\rho$. 
Conversely, given a convex projective structure $\rho$ and its developing image  $\Omega_{\rho}$, we can construct an affine sphere by solving a Monge-Amp{\'e}re equation (see \cite[Section 7]{Loftin_AffineSpheres}). The affine sphere will provide a complex structure $J$ and a holomorphic cubic differential $q$ on $X_J$.

Since hyperbolic structures correspond to complex structures, the above identification provides a way to parametrize the space of convex projective structures $\frak C(S)$ as the vector bundle of holomorphic cubic differentials over $\cal T(S)$. In particular,  we can fix a nonzero cubic differential $q$ and consider the associated family of representations $(\rho_t)_{\scriptscriptstyle t\in\bb R}$ in $\frak C(S)$ parametrized by $(tq)_{t\in\mathbb{R}}$.  When $t\geq 0$, we call such a family a \emph{cubic ray}. In this section, we study properties of the renormalized Hilbert length spectrum along cubic rays in $\frak C(S)$.

For the reminder of this section, when there is no ambiguity, we ease notation by writing $h_t=h(\rho_t)$ for the topological entropy, and $L^H_t=L^H_{\rho_t}$ for the renormalized Hilbert length spectrum of a cubic ray $(\rho_t)_{t\geq 0}$. With these notations, $\rho_0$ is the hyperbolic structure associated to $X_0$. The entropy $h_0$ equals $1$ and the renormalized Hilbert length spectrum $L^H_0$ is the $X_0$-length spectrum.

The following observation is important and follows immediately from work of Tholozan \cite{Tho_entropy}.

\begin{lem}\label{lem:tholo} Let $(\rho_t)_{t\geq 0}$ be a cubic ray. There exists $D>1$ and $t_0\geq0$ such that, for $t\geq t_0$,
\begin{align*}
\frac{1}{D}L^H_0 \leq L^H_t\leq DL^H_0.
\end{align*}
\end{lem}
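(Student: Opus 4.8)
The plan is to reduce the statement, via the Labourie--Loftin parametrization \cite{Labourie_FlatProjective,Loftin_AffineSpheres}, to a comparison theorem of Tholozan. Along a cubic ray $(\rho_t)_{t\ge 0}$ associated to $(tq)_{t\ge 0}$ the underlying complex (equivalently hyperbolic) structure $X_0$ is held fixed and only the cubic differential is scaled, so $\rho_0$ is the hyperbolic structure $X_0$, one has $h_0=1$, and $L^H_0=h_0\,\ell^H_{\rho_0}=\ell_{X_0}$ is exactly the $X_0$-length spectrum. The content of the lemma is therefore that the renormalized spectra $L^H_t=h_t\,\ell^H_{\rho_t}$ remain uniformly bi-Lipschitz to the single fixed function $\ell_{X_0}$, with constants independent of $t$.

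For the upper bound I would quote Tholozan's entropy--length inequality \cite{Tho_entropy}, which yields $h_t\,\ell^H_{\rho_t}([\gamma])\le \ell_{X_0}([\gamma])$ for every $[\gamma]\in[\Gamma]$ and every $t$ (with equality at $t=0$); this gives $L^H_t\le L^H_0\le D\,L^H_0$ at once. I would also record Tholozan's domination estimate $\ell^H_{\rho_t}([\gamma])\ge \ell_{X_0}([\gamma])$, but note that on its own it produces only the weak bound $L^H_t\ge h_t\,\ell_{X_0}$.

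The substance of the proof is the matching lower bound $L^H_t([\gamma])\ge \tfrac1D\,\ell_{X_0}([\gamma])$, uniform in both $[\gamma]$ and $t\ge t_0$, for which I would invoke the corresponding lower comparison coming from Tholozan's analysis of the affine-sphere geometry along the ray. Here lies the main obstacle: as $t\to\infty$ the Hilbert lengths $\ell^H_{\rho_t}([\gamma])$ blow up while the entropy $h_t$ degenerates, so the naive bound $L^H_t\ge h_t\,\ell_{X_0}$ collapses and is useless. What must be shown is that these two effects cancel \emph{uniformly over the infinitely many free homotopy classes}, so that no sequence of curves has its renormalized length tend to zero; this uniform non-degeneration is precisely the estimate Tholozan supplies, and it accounts for the restriction to $t\ge t_0$, the regime in which his asymptotic comparison is in force. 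Taking $D$ to be the larger of the two comparison constants then yields $\tfrac1D\,\ell_{X_0}\le h_t\,\ell^H_{\rho_t}\le D\,\ell_{X_0}$, which is the claimed inequality $\tfrac1D L^H_0\le L^H_t\le D\,L^H_0$.
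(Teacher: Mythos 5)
Your reduction is right ($h_0=1$, so $L^H_0=\ell_{X_0}$) and you have located the correct source, but the proof as written has a gap at both bounds, and in both cases the missing ingredient is the same. The statement of Tholozan's that is actually needed is his two-sided comparison (Theorem 3.9 of \cite{Tho_entropy}): there exist $B>1$ and $t_0\ge 0$ with
\[
\tfrac{1}{B}\,t^{1/3}\,\ell^H_0\;\le\;\ell^H_t\;\le\;B\,t^{1/3}\,\ell^H_0\qquad(t\ge t_0).
\]
For the upper bound you instead invoke an ``entropy--length inequality'' $h_t\,\ell^H_t\le\ell_{X_0}$; no such inequality appears in \cite{Tho_entropy} (his Theorem B is the domination $\ell^H_t\ge\ell_j$, which points in the other direction), and it is strictly stronger than what the lemma requires. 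For the lower bound you correctly diagnose that the issue is a uniform cancellation between the blow-up of $\ell^H_t$ and the decay of $h_t$, but you then defer the entire content to ``the estimate Tholozan supplies'' without identifying it; Tholozan does not directly estimate the product $h_t\,\ell^H_t$.

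The point you are missing is that the entropy asymptotics are not a separate input: they follow from the displayed length comparison itself. The two-sided bound sandwiches the counting functions, $\#\{[\gamma]:\ell^H_0([\gamma])\le T/(Bt^{1/3})\}\le\#\{[\gamma]:\ell^H_t([\gamma])\le T\}\le\#\{[\gamma]:\ell^H_0([\gamma])\le BT/t^{1/3}\}$, and since $h_0=1$ this forces $\tfrac{1}{Bt^{1/3}}\le h_t\le \tfrac{B}{t^{1/3}}$. Multiplying the length bounds by the entropy bounds, the factors $t^{\pm 1/3}$ cancel and one gets $\tfrac{1}{B^2}\,\ell^H_0\le h_t\,\ell^H_t\le B^2\,\ell^H_0$ uniformly in $[\gamma]$ and $t\ge t_0$, i.e.\ the lemma with $D=B^2$. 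The uniformity over the infinitely many free homotopy classes that worries you is automatic because the single constant $B$ from Theorem 3.9 controls both the length comparison and (via the counting argument) the entropy; no further affine-sphere analysis is required.
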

\begin{proof} Tholozan \cite[Theorem 3.9]{Tho_entropy} proves that there exists $B>1$ and $t_0 \geq0$ such that, for $t\geq t_0$, 
\begin{equation}\label{eqn:tholo_length}
\frac{1}{B}t^{1/3}\ell^{H}_0\leq\ell^{H}_t\leq Bt^{1/3}\ell^{H}_0.
\end{equation}
In turn, this implies that
\[
\#\left\{[\gamma]\ \Big\vert\ \ell^{H}_0([\gamma])\leq \frac{1}{Bt^{1/3}}T\right\}\leq \#\{[\gamma]\mid \ell^{H}_t([\gamma])\leq T\}\leq \#\left\{[\gamma]\ \Big\vert\ \ell^{H}_0([\gamma])\leq \frac{B}{t^{1/3}}T \right\}.
\]
Since $h_0=1$, by Definition of topological entropy, the above inequalities imply that 
\begin{equation}\label{eqn:tholo_entropy}
\frac{1}{Bt^{1/3}}\leq h_t\leq \frac{B}{t^{1/3}}.
\end{equation}
The result follows by taking $D=B^2$ and combining the inequalities (\ref{eqn:tholo_length}) and (\ref{eqn:tholo_entropy}).
\end{proof}

The following example points out a problem with comparing un-renormalized length spectra and partially motivates our study of the renormalized Hilbert length spectrum. In particular, Equation (\ref{eq:badcorrelation}) below should be compared to the correlation theorem.

\begin{example}\label{ex:badcorrelation} Fix $\epsilon>0$. 
Let $\rho_0$ be a Fuchsian representation in $\mathfrak C(S)$ and consider a cubic ray $(\rho_t)_{t\geq 0}\subset \mathfrak C(S)$. Let $t_0\geq 0$ be as in Lemma \ref{lem:tholo} and consider any $t\geq t_0$.
Then
\begin{equation}\label{eq:badcorrelation}
\lim_{x\to\infty}\# \Big\{[\gamma]\in[\Gamma]\, \Big\vert\, \ell_0^{H}([\gamma]) \in \big(x,x+\eee\big),\  \ell_{t}^{H}([\gamma]) \in \big(x, x+\eee\big)\Big\}=0.
\end{equation}

\begin{proof}
For $t\geq t_0$, consider $\delta>0$ small enough so that $h_{t}<1< D-\delta$, where $D$ is as in Lemma \ref{lem:tholo}. There exists $M$ sufficiently large such that $\frac{Dx}{x+\eee}\geq D-\delta$ for all $x>M$. Therefore, for all $x>M$, if $x<\ell_0^H([\gamma])<x+\epsilon$, then
\begin{align*}
\ell^H_t([\gamma])\geq\frac{D}{h_t}\ell^H_{0}([\gamma]) >\frac{D}{D-\delta}x\geq x+\varepsilon.
\end{align*}
This shows that $\Big\{[\gamma]\in[\Gamma]\, \Big\vert\, \ell_0^{H}([\gamma]) \in \big(x,x+\eee\big),\  \ell_{t}^{H}([\gamma]) \in \big(x, x+\eee\big)\Big\}=\emptyset$ for all $x>M$, which implies that equation (\ref{eq:badcorrelation}) holds.
\end{proof}
\end{example}

\subsection{Cubic rays and correlation numbers} In contrast with Theorem \ref{thm:pinching}, we show some instances in which the correlation numbers $M(\rho_t, \eta_t)$ arising from two different cubic rays are uniformly bounded away from zero. We first introduce a convenient lemma that will be used in the proof of Theorems \ref{thm:differentfibers} and \ref{thm:cubicrays}.

\begin{lem}\label{lem:ManhattanEntropy}
Suppose $\rho$ and $\eta$ are different convex real projective structures such that $\rho^{*} \neq \eta$. Let $\upsilon_\rho$ and $\upsilon_\eta$ be the renormalized Hilbert currents associated to $\rho$ and $\eta$, respectively. For $s\in[0,1]$, let $u_s$ denote the geodesic current $s\upsilon_\rho+(1-s)\upsilon_\eta$. Then there exists a unique $s_0\in(0,1)$ such that
\[
M(\rho,\eta)=h(u_{s_0}).
\]
\end{lem}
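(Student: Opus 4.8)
The plan is to compute the length spectrum of the current $u_s$, express its exponential growth rate $h(u_s)$ in terms of the Manhattan curve $\cal C(\rho,\eta)$, and then match the resulting quantity with the formula for $M(\rho,\eta)$ coming from Theorem \ref{thm:Manhattan}. First I would record, using bilinearity of the intersection pairing and the defining property $i(\upsilon_\rho,\delta_\gamma)=L^H_\rho([\gamma])$ of the renormalized Hilbert currents, that
\[
\ell_{u_s}([\gamma]) = s\,L^H_\rho([\gamma]) + (1-s)\,L^H_\eta([\gamma]) = s\,h(\rho)\,\ell^H_\rho([\gamma]) + (1-s)\,h(\eta)\,\ell^H_\eta([\gamma])
\]
for every $[\gamma]\in[\Gamma]$. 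Since $\upsilon_\rho$ and $\upsilon_\eta$ are period minimizing and a positive combination of their length spectra is again period minimizing, $h(u_s)$ is well defined as a genuine limit for $s\in(0,1)$.

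Next I would identify $h(u_s)$ with a critical exponent and read it off the Manhattan curve. The exponential growth rate of $u_s$ equals the abscissa of convergence of the Poincar\'e series $\sum_{[\gamma]} e^{-t\ell_{u_s}([\gamma])}$. Substituting the expression above, this series equals $\sum_{[\gamma]} \exp\!\big(-(ts\,h(\rho)\ell^H_\rho([\gamma]) + t(1-s)h(\eta)\ell^H_\eta([\gamma]))\big)$, which by the definition of $\cal C(\rho,\eta)$ converges exactly when the point $(ts\,h(\rho),\,t(1-s)h(\eta))$ lies in the convergence region. Through the pressure description $\cal C(\rho,\eta)=\{(a,b):P(-a-bf)=0\}$ and the identity $P(-a-bf)=-a+P(-bf)$, convergence holds iff $a>P(-bf)$, so the convergence region is the (convex) epigraph bounded by $\cal C(\rho,\eta)$. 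As $t$ increases, the ray $\{(ts\,h(\rho),\,t(1-s)h(\eta)):t\ge 0\}$ starts at the origin (in the divergence region) and eventually enters the epigraph, crossing $\cal C(\rho,\eta)$ exactly once by convexity; hence $h(u_s)$ is precisely the value $t$ at which this crossing occurs. Denoting the crossing point by $(a(s),b(s))$ and unwinding coordinates gives $a(s)/h(\rho)=s\,h(u_s)$ and $b(s)/h(\eta)=(1-s)h(u_s)$, and therefore the key identity
\[
h(u_s)=\frac{a(s)}{h(\rho)}+\frac{b(s)}{h(\eta)} .
\]

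Finally I would invoke Theorem \ref{thm:Manhattan}. Let $(a_0,b_0)\in\cal C(\rho,\eta)$ be the point at which the tangent is parallel to the segment joining $(h(\rho),0)$ and $(0,h(\eta))$, so that $M(\rho,\eta)=a_0/h(\rho)+b_0/h(\eta)$. Since $\rho\neq\eta$ and $\rho^*\neq\eta$, Theorem \ref{PropertyManhattan} ensures $\cal C(\rho,\eta)$ is strictly convex, and a mean value argument along the arc from $(h(\rho),0)$ to $(0,h(\eta))$ places $(a_0,b_0)$ in the open first quadrant, i.e.\ $a_0,b_0>0$. Setting
\[
s_0=\frac{a_0/h(\rho)}{a_0/h(\rho)+b_0/h(\eta)}\in(0,1),
\]
the direction $(s_0 h(\rho),(1-s_0)h(\eta))$ is a positive multiple of $(a_0,b_0)$, so $(a(s_0),b(s_0))=(a_0,b_0)$ and the key identity yields $h(u_{s_0})=M(\rho,\eta)$. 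For uniqueness I would note that the linear functional $L(a,b)=a/h(\rho)+b/h(\eta)$ has level lines of slope $-h(\eta)/h(\rho)$, the slope of the distinguished tangent line; strict convexity then forces $L\ge M(\rho,\eta)$ on $\cal C(\rho,\eta)$ with equality only at $(a_0,b_0)$, and the injectivity of $s\mapsto(a(s),b(s))$ makes $s_0$ unique (indeed the unique minimizer of $h(u_s)$).

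The main obstacle I anticipate is the second step: rigorously justifying that $h(u_s)$ coincides with the abscissa of convergence of the associated Poincar\'e series and that this abscissa is detected by a single transversal crossing of the ray with $\cal C(\rho,\eta)$. This rests on the standard equivalence between orbit-counting growth and Poincar\'e-series convergence (via the pressure, using Lemma \ref{lem:PressueZero} and Liv\v{s}ic invariance) applied to period minimizing currents, together with convexity of the convergence region. Once that is in place, the remainder is elementary planar convex geometry.
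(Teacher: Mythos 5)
Your proposal is correct and follows essentially the same route as the paper: both identify the point where the ray through $(sh(\rho),(1-s)h(\eta))$ meets the Manhattan curve as $(h(u_s)sh(\rho),h(u_s)(1-s)h(\eta))$ and then apply Theorem \ref{thm:Manhattan} at the tangency point. The one step you flagged as delicate --- that $h(u_s)$ is precisely the parameter at which the ray crosses $\cal C(\rho,\eta)$ --- is handled in the paper not via the Poincar\'e-series characterization but by noting that $sh(\rho)+(1-s)h(\eta)f^{\eta}_{\rho}$ is a reparametrization function with periods $\ell_{u_s}([\gamma])$, so Lemma \ref{lem:PressueZero} gives $P\big(-h(u_{s})(sh(\rho)+(1-s)h(\eta)f^{\eta}_{\rho})\big)=0$ directly; your explicit uniqueness argument via strict convexity is a welcome addition, since the paper leaves that point implicit.
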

\begin{proof}
Since $\eta\neq \rho,\rho^*$, the Manhattan curve $\cal C(\rho,\eta)$ is strictly convex. The line $\frac{x}{h(\rho)}+\frac{y}{h(\eta)}=1$ intersects the Manhattan curve in the first quadrant only at the points $(h(\rho),0)$ and $(0,h(\eta))$. It follows that for every $s\in[0,1]$ the straight line connecting $(sh(\rho),(1-s)h(\eta))$ and the origin must intersect $\mathcal{C}(\rho,\eta)$.
 By Lemma \ref{lem:PressueZero},
\[
P(-h(u_{s})sh(\rho)-h(u_{s})(1-s)h(\eta)f^{\eta}_{\rho})=P(-h(u_{s})(sh(\rho)+(1-s)h(\eta)f^{\eta}_{\rho}))=0,
\]
where $f^{\eta}_{\rho}$ is the reparametrization function from Lemma \ref{lem: rep}.
It follows from the definition of the Manhattan curve that every point on $\cal C(\rho,\eta)$ in the first quadrant can be written in the form of $(h(u_{s})sh(\rho),h(u_{s})(1-s)h(\eta))$ for some $s\in[0,1]$.
By Theorem \ref{thm:Manhattan}, there exists $s_0\in(0,1)$ such that
\[
M(\rho,\eta)=\frac{s_0h(u_{s_0})h(\rho)}{h(\rho)}+\frac{(1-s_0)h(u_{s_0})h(\eta)}{h(\eta)}=h(u_{s_0}).\qedhere
\]
\end{proof}

We first study the case of two cubic rays which lie in two different fibers of the vector bundle $\frak C(S)\to\cal T(S)$ given by the Labourie-Loftin parametrization of the space of convex projective surfaces.

\medskip\noindent
\textbf{Theorem} \ref{thm:differentfibers}. {\em Let $(\rho_t)_{t\geq 0}$, $(\eta_r)_{r\geq 0}$ be two cubic rays associated to two different hyperbolic structures $\rho_0\neq\eta_0$. Then, there exists a constant $C>0$ such that for all $t,r \geq 0$}
\[
M(\rho_t,\eta_r)\geq CM(\rho_0,\eta_0).
\]
\begin{proof} Fix $t, r\geq 0$ and write $\rho=\rho_t$ and $\eta=\eta_r$, for simplicity. Note that by hypothesis $L^H_\rho\neq L^H_\eta$ and $L^H_{\rho_0}\neq L^H_{\eta_0}$. For $s\in[0,1]$, let $u_s$ denote the geodesic current given by $
s \upsilon_\rho+(1-s)\upsilon_\eta$
where $\upsilon_\rho$ and $\upsilon_\eta$ are the renormalized Hilbert geodesic currents of $\rho$ and $\eta$, respectively. Denote by $h(u_s)$ the exponential growth rate for the geodesic current $u_s$.

By Lemma \ref{lem:tholo}, there exist constants $D_1,D_2$ depending on $\rho_0$ and $\eta_0$, respectively such that
\[
sL^H_\rho+(1-s)L^H_\eta\leq sD_1 L^H_{\rho_0}+(1-s)D_2 L^H_{\eta_0}\leq \max\{D_1,D_2\}( s L^H_{\rho_0}+(1-s) L^H_{\eta_0}).
\]
Set $C=\frac{1}{\max\{D_1,D_2\}}$ and $w_s=s\upsilon_{\rho_0}+(1-s)\upsilon_{\eta_0}$, where $\upsilon_{\rho_0}$ and $\upsilon_{\eta_0}$ are the renormalized Hilbert geodesic currents of $\rho_0$ and $\eta_0$, respectively. Then,
\begin{align*}
h(u_s)\geq C h(w_s).
\end{align*}
By Lemma \ref{lem:PressueZero}, the intersection between the Manhattan curve $\cal C(\rho_0, \eta_0)$ and the line passing through the origin and the point $(s,(1-s))$ has coordinates $(s h(w_s),(1-s)h(w_s))$ and lies on the line $y+x=h(w_s)$. Then, by Theorem \ref{thm:Manhattan}, 
\[
h(w_s)\geq M(\rho_0,\eta_0).
\]
See Figure \ref{fig:h(ws)}. Finally, by Lemma \ref{lem:ManhattanEntropy} there exists $s_0\in(0,1)$ such that
\[
M(\rho,\eta)=h(u_{s_0})\geq h(w_{s_0})\geq C M(\rho_0,\eta_0)
\]
which concludes the proof.
\end{proof}
\begin{figure}[!htb]
    \centering
    \includegraphics[width=.45\textwidth]{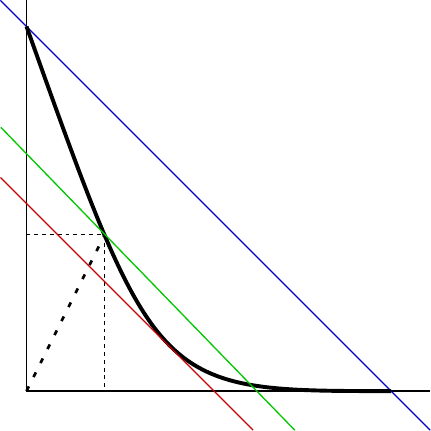}
    \put (-530,11){\makebox[0.7\textwidth][r]{\small{$0$}}}
    \put (-348,11){\makebox[0.7\textwidth][r]{\small{$1$}}}
    \put (-530,193){\makebox[0.7\textwidth][r]{\small{$1$}}}
    \put (-470,11){\makebox[0.7\textwidth][r]{\small{$s h(w_s)$}}}
    \put (-530,93){\makebox[0.7\textwidth][r]{\small{$(1-s)h(w_s)$}}}
    \put (-543,121){\makebox[0.7\textwidth][r]{\small{$y+x=M(\rho_0,\eta_0)$}}}
    \put (-543,147){\makebox[0.7\textwidth][r]{\small{$y+x=h(w_s)$}}}
    \caption{The correlation number of $\rho_0$ and $\eta_0$ is less or equal to the exponential growth rate of the geodesic current $w_s$}
    \label{fig:h(ws)}
\end{figure} 

Finally, we prove Theorem \ref{thm:cubicrays} from the introduction.

\medskip\noindent
{\textbf{Theorem} \ref{thm:cubicrays}\textbf{.}} {\em Let $\rho_t$ and $\eta_t$ be two cubic rays associated to two different holomorphic cubic differentials $q_1$ and $q_2$ on a hyperbolic structure $X_0$ such that $q_1,q_2$ have unit $L^2$-norm with respect to $X_0$ and $q_1\neq -q_2$. Then the correlation number $M(\rho_{t},\eta_{t})$ is uniformly bounded away from zero as $t$ goes to infinity.}
\begin{proof} 
We note that $\rho_0=\eta_0$ and $L_{\rho_0}^{H}=L_{\eta_0}^{H}=L_{0}^{H}$. Recall from Lemma \ref{lem:tholo}, there exists $D_1,D_2>1$ such that for $t$ large,
\begin{align*}
\frac{1}{D_1}L^H_0 \leq L^H_{\rho_t}\leq D_1L^H_0\qquad\text{ and }\qquad \frac{1}{D_2}L^H_0 \leq L^H_{\eta_t}\leq D_2L^H_0.
\end{align*}

Consider the renormalized Hilbert currents $\upsilon_{\rho_t}$ and $\upsilon_{\eta_t}$ for $\rho_{t}$ and $\eta_{t}$, respectively. Let $s\in[0,1]$ and set the geodesic current $u_{t,s}=s\upsilon_{\rho_t}+(1-s)\upsilon_{\eta_t}$. 
As a first step, we show that the entropy of the geodesic current $u_{t,s}=s\upsilon_{\rho_t}+(1-s)\upsilon_{\eta_t}$ is uniformly bounded away from zero. We have from Lemma \ref{lem:tholo} for $t$ large, for all $[\gamma]\in[\Gamma]$, 
\[
s L_{\rho_t}^H([\gamma])+(1-s)L^H_{\eta_t}([\gamma])\leq (sD_1+(1-s)D_2)L_0^H([\gamma])\leq \max\{D_1, D_2\} L^H_0([\gamma]).
 \]
Thus, we obtain $h(u_{t,s}) \geq \frac{1}{D}$ where $D=\max\{D_1,D_2\}$.

Next we want to show this implies the correlation number $M(\rho_{t},\eta_{t})$ is bounded away from zero.
Because $q_2\neq -q_1$, we know $\rho_t^{*} \neq \eta_t$ for $t>0$ (see \cite[section 5 and section 8]{Loftin_AffineSurvey}). For each $t>0$, by Lemma \ref{lem:ManhattanEntropy}, we can find some $s\in(0,1)$ such that 
\[
M(\rho_t,\eta_t)=h(u_{t,s}).
\]
We conclude that $M(\rho_{t},\eta_{t}) \geq \frac{1}{D}$ for $t$ large, as desired.
\end{proof}

\subsection{Cubic rays and geodesic currents}\label{ssec:systole}
In this section we observe two properties of renormalized Hilbert currents  $\upsilon_{\rho_t}=h_t\nu_{\rho_t}$ along cubic rays $(\rho_t)_{t\geq 0}$ which readily follow from Lemma \ref{lem:tholo}. Let us start by showing that the self-intersection of the renormalized Hilbert geodesic current is uniformly bounded along cubic rays.
\begin{prop} There exists $C>1$ such that for all $t\in\bb R$
\[
\frac{1}{C}\leq i(\upsilon_{\rho_t},\upsilon_{\rho_t})\leq C
\]
where $\upsilon_{\rho_t}$ is the renormalized Hilbert geodesic current of $\rho_t$.
\end{prop}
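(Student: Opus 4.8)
The plan is to transfer the bi-Lipschitz control on length spectra furnished by Lemma \ref{lem:tholo} into a bi-Lipschitz control on the intersection pairing against an \emph{arbitrary} geodesic current, and then feed the currents $\upsilon_{\rho_t}$ and $\upsilon_{\rho_0}$ into this comparison. Recall that $\upsilon_{\rho_t}=h_t\nu_{\rho_t}$, so $i(\upsilon_{\rho_t},\upsilon_{\rho_t})=h_t^2\,i(\nu_{\rho_t},\nu_{\rho_t})$. First I would dispose of a compact range of parameters: on any compact interval of $t$ the cubic ray, the entropy $h_t>0$, the current $\nu_{\rho_t}$, and the Bonahon intersection form all vary continuously, so $i(\upsilon_{\rho_t},\upsilon_{\rho_t})$ is a continuous function, bounded above and bounded below away from $0$ since $i(\nu_{\rho_t},\nu_{\rho_t})\geq -\pi^2\chi(S)>0$. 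For $t<0$ the representation $\rho_t$ is the cubic ray attached to the differential $-q$ (reparametrized by $|t|$) starting at the same $\rho_0$, so Lemma \ref{lem:tholo} applies verbatim there too. Enlarging the constant $D>1$ so that it serves both $q$ and $-q$, it suffices to establish the bounds for $|t|\geq t_0$, where $\frac{1}{D}L^H_0\leq L^H_t\leq D\,L^H_0$.

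The crucial step is the claim that, for such $t$ and \emph{every} $\nu\in\mathcal{C}(S)$,
\[
\frac{1}{D}\,i(\upsilon_{\rho_0},\nu)\leq i(\upsilon_{\rho_t},\nu)\leq D\,i(\upsilon_{\rho_0},\nu).
\]
To see this, take first a finite non-negative combination of closed geodesics $\nu=\sum_k c_k\delta_{\gamma_k}$. Bilinearity of $i$ and the defining property $i(\upsilon_\rho,\delta_\gamma)=L^H_\rho([\gamma])$ give $i(\upsilon_{\rho_t},\nu)=\sum_k c_k L^H_t([\gamma_k])$, and the inequality follows termwise from Lemma \ref{lem:tholo}, using $L^H_0([\gamma_k])=i(\upsilon_{\rho_0},\delta_{\gamma_k})$ (here $h_0=1$, so $\upsilon_{\rho_0}=\nu_{\rho_0}$). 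Since weighted closed geodesics are dense in $\mathcal{C}(S)$ by Bonahon and the pairing $i$ is continuous, the inequality extends to all geodesic currents $\nu$.

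Substituting $\nu=\upsilon_{\rho_t}$ and then $\nu=\upsilon_{\rho_0}$ into the displayed inequality and chaining them yields
\[
i(\upsilon_{\rho_t},\upsilon_{\rho_t})\leq D\,i(\upsilon_{\rho_0},\upsilon_{\rho_t})\leq D^2\,i(\upsilon_{\rho_0},\upsilon_{\rho_0}),
\]
and symmetrically $i(\upsilon_{\rho_t},\upsilon_{\rho_t})\geq D^{-2}\,i(\upsilon_{\rho_0},\upsilon_{\rho_0})$. As $\upsilon_{\rho_0}=\nu_{\rho_0}$ is the Liouville current of the hyperbolic structure $\rho_0$, Bonahon's computation gives $i(\upsilon_{\rho_0},\upsilon_{\rho_0})=-\pi^2\chi(S)$, a fixed positive constant depending only on the topology of $S$. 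Taking $C$ to be the maximum of $D^2(-\pi^2\chi(S))$, $D^2/(-\pi^2\chi(S))$, the bounds on the compact middle range, and $1$, produces $\frac{1}{C}\leq i(\upsilon_{\rho_t},\upsilon_{\rho_t})\leq C$ for all $t\in\bb R$.

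The one genuinely non-formal point, which I expect to be the main obstacle, is the passage from the pointwise length comparison of Lemma \ref{lem:tholo} (a priori valid only on closed curves) to the comparison of intersection numbers against \emph{all} currents. This hinges on the density of weighted closed geodesics in $\mathcal{C}(S)$ together with continuity of the intersection form; once these are invoked, the remainder is bookkeeping.
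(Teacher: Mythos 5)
Your argument is correct and follows the same overall strategy as the paper: transfer the bi-Lipschitz comparison of renormalized length spectra from Lemma \ref{lem:tholo} into a quadratic comparison of self-intersection numbers, then invoke Bonahon's computation $i(\upsilon_{\rho_0},\upsilon_{\rho_0})=-\pi^2\chi(S)$. The only real difference is in how the middle step is justified: the paper simply cites Corollary 5.2 of \cite{BCLS_currents}, which states directly that
\[
\Bigl(\inf_{[\gamma]}\tfrac{L^H_t([\gamma])}{L^H_0([\gamma])}\Bigr)^2\leq \frac{i(\upsilon_{\rho_t},\upsilon_{\rho_t})}{i(\upsilon_{\rho_0},\upsilon_{\rho_0})}\leq \Bigl(\sup_{[\gamma]}\tfrac{L^H_t([\gamma])}{L^H_0([\gamma])}\Bigr)^2,
\]
whereas you re-derive this comparison from first principles using the density of weighted Dirac currents in $\mathcal C(S)$ together with bilinearity and continuity of Bonahon's intersection form, and then chain the resulting inequality twice. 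This is essentially the proof of that corollary, so nothing is lost, and your version is more self-contained. You are also more careful than the printed proof on two points it glosses over: the compact range of parameters (handled by continuity and positivity of $h_t^2\,i(\nu_{\rho_t},\nu_{\rho_t})$) and negative $t$ (handled by passing to the ray of $-q$); the paper only says ``applying Lemma \ref{lem:tholo} for $t$ large.''
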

\begin{proof} { Recall from Section \ref{ssec:currents} that Bonahon proved that $i(\upsilon_{\rho_0},\upsilon_{\rho_0})=-\pi^2\chi(S)$, where $\chi(S)$ is the Euler characteristic of $S$.} Corollary 5.2 in \cite{BCLS_currents} states that
\[
\left(\inf_{[\gamma]\in[\Gamma]}\frac{L^H_t([\gamma])}{L^H_0([\gamma])}\right)^2\leq \frac{i(\upsilon_{\rho_t},\upsilon_{\rho_t})}{i(\upsilon_{\rho_0},\upsilon_{\rho_0})}\leq \left(\sup_{[\gamma]\in[\Gamma]}\frac{L^H_t([\gamma])}{L^H_0([\gamma])}\right)^2
\]
so
\[
-\pi^2\chi(S)\left(\inf_{[\gamma]\in[\Gamma]}\frac{L^H_t([\gamma])}{L^H_0([\gamma])}\right)^2\leq i(\upsilon_{\rho_t},\upsilon_{\rho_t})\leq-\pi^2\chi(S) \left(\sup_{[\gamma]\in[\Gamma]}\frac{L^H_t([\gamma])}{L^H_0([\gamma])}\right)^2
\]
We conclude by applying Lemma \ref{lem:tholo} for $t$ large.
\end{proof}
Bonahon \cite{Bon_currents} showed that Thurston's compactification of the Teichm\"uller space can be understood via geodesic currents. Explicitly, given a diverging sequence of hyperbolic structures $m_t$, there exists $\lambda_t>0$ and a geodesic current $\alpha$ with $i(\alpha,\alpha)=0$, a {\em measured lamination}, such that $\lambda_t\nu_{m_t}$ converges up to subsequences to $\alpha$. In this case, the systole of $\alpha$ vanishes, i.e.
\[
\rm{Sys}(\alpha)=\inf_{[\gamma]\in[\Gamma]}i(\alpha,\delta_\gamma)=0.
\]
Burger, Iozzi, Parreau, and Pozzetti \cite{BIPP_compact} show that there exist diverging sequences of Hilbert geodesic currents which converge projectively to a current $\alpha$ with $\rm{Sys}(\alpha)>0$. We use Lemma \ref{lem:tholo} to show that this happens along cubic rays.

\medskip\noindent
{\textbf{Theorem} \ref{prop:degen}\textbf{.}} {\em As $t$ goes to infinity, the renormalized Hilbert geodesic current $\upsilon_{\rho_t}$ along a cubic ray converges, up to passing to a subsequence, to a geodesic current $\upsilon$ with $\rm{Sys}(\upsilon)>0$.}
\begin{proof} Suppose $(\alpha_i)$ and $(\beta_j)$ are filling pair-of-pants decompositions, i.e. they are pants decompositions such that the complement of their union is a collection of topological discs. Set $u=\sum_{\gamma\in (\alpha_i)\cup(\beta_j)}\delta_\gamma$ and let $M=\max_{\gamma\in(\alpha_i)\cup(\beta_j)}L^H_0([\gamma])$.
For every $t\in\bb R$, we have
\[
i(\upsilon_{\rho_t},u)=\sum_{\gamma\in(\alpha_i)\cup(\beta_j)}L^H_t([\gamma]).
\]
By Lemma \ref{lem:tholo} for $t$ large $\upsilon_t$ lies in $
\left\{\nu\in\cal C(S)\mid i\left(\nu,u\right)\leq (6g-6)DM \right\}$ for some $D>1$. This set is compact by \cite[Proposition 4]{Bon_currents} and by linearity of the intersection number. Thus, $\upsilon_t$ converges, up to passing to a subsequence, to $\upsilon\in\cal C(S)$. Applying Lemma \ref{lem:tholo} again, we see that for all $t$ large $\rm{Sys}(\upsilon_{\rho_t})$ is greater or equal to $D^{-1}\rm{Sys}(\upsilon_{\rho_0})>0$. By continuity, the systole $\rm{Sys}(\upsilon)$ is strictly positive. 
\end{proof}

\section{Generalization to Hitchin representations}\label{sec:Hitchin}
In this section, we illustrate how to generalize the main results of sections \ref{sec:corrthm} and \ref{sec:corrnum} to the context of \emph{Hitchin representations}. 

We start with introducing Hitchin components and Hitchin representations. Given $\rho\in\mathcal{T}(S)$, we can postcompose the corresponding holonomy representation $\rho\colon\Gamma\to \rm{PSL}(2,\bb R)$ with the unique (up to conjugation) irreducible representation $i\colon\rm{PSL}(2,\bb{R}) \to \rm{PSL}(d,\bb{R})$ given by the action of $A\in\rm{PSL}(2,\bb{R})$ on the space of degree $d-1$ homogeneous polynomials in two variables. The \emph{Hitchin component} $\cal H_d(S)$ is the connected component of the character variety $\sf{Hom}(\Gamma,\rm{PSL}(d,\bb R))/\!/\rm{PSL}(d,\bb R)$ containing $i\circ\rho$. The copy of the Teichm\"uller space $\cal T(S)$ embedded in the Hitchin component is its {\em Fuchsian locus}. Hitchin proves in \cite{Hit_topol}, using Higgs bundles techniques, that $\cal H_d(S)$ is homeomorphic to an open Euclidean ball of dimension $(2g-2)\cdot\dim(\rm{PSL}(d,\mathbb{R}))$.  When $d=2$, the Hitchin component $\cal H_2(S)$ coincides with the Teichm\"uller space $\cal T(S)$. Choi and Goldman \cite{CG_convex} identify the Hitchin component $\cal H_3(S)$ with the space of convex projective structures on the surface $S$, which is the main focus of Sections \ref{sec:prelim} through \ref{sec:applications}.

In this section, we establish the correlation theorem \ref{thm:HitchinMain} for pairs of Hitchin representations for any $d\geq 3$. In this setting, the length spectra will not be defined geometrically as for the Hilbert length spectrum of a convex projective structure, but they will be interpreted as periods of the {\em Busemann cocycle}, which was first introduced by Quint \cite{Quint_cocycle}. Then, we follow a construction of Sambarino \cite{Sambarino_Quantitative, Sambarino_orbital} to replace the geodesic flow on the unit tangent bundle of a convex projective surface with Sambarino's {\em translation flows}. These flows are not necessarily Anosov, but fit in more general framework of \emph{metric Anosov flows}. Nevertheless, the main results coming from Thermodynamic formalism needed for this paper still hold in this setting (\cite[Section 3]{Pollicot_SmaleFlow}).

\subsection{Length functions, Busemann cocycles and entropy}

In this section, we define length functions for Hitchin representations and the Busemann cocycles associated to them. We refer to \cite{Quint_cocycle, Sambarino_Quantitative} for a more detailed construction.
 
We need to recall some Lie theory. Let $\rm{G}=\rm{PSL}(d,\bb R)$ and consider the standard choices of Cartan subspace
\[
\frak a=\{\vec x\in\bb R^d\mid x_1+\dots+x_d=0\}
\]
and positive Weyl chamber $\frak a^+=\{\vec x\in\frak a\mid x_1\geq \dots\geq x_d\}$.  Let $\lambda\colon\rm G\to\frak a^+$ be the \emph{Jordan projection}
\[
\lambda(g)=(\log \lambda_{1}(g),\dots, \log \lambda_{d}(g))
\]
consisting of the logarithms of the moduli of the eigenvalues of $g$ in nonincreasing order. We will use the following fundamental property of the Jordan projection of the image of a Hitchin representation. 
\begin{thm}[Fock-Goncharov \cite{FG}, Labourie \cite{Lab_Anosov}]\label{thm:loxodromic} Every Hitchin representation $\rho\in\mathcal H_d(S)$ is discrete and faithful and for every $[\gamma]\in[\Gamma]$, 
\[
\lambda_1(\rho(\gamma))>\dots>\lambda_d(\rho(\gamma))>0.
\] 
\end{thm}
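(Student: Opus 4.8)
The plan is to deduce the statement from the structural fact that every Hitchin representation is \emph{Borel--Anosov}, following Labourie's original connectedness argument; once this is in hand, discreteness, faithfulness, and the strict domination of eigenvalue moduli all follow from the general theory of Anosov representations. (Fock--Goncharov's positivity gives an alternative route, but the Anosov viewpoint is the most natural one in our dynamical setting.)

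First I would verify the conclusion on the Fuchsian locus. For $\rho_0\in\cal T(S)$ every non-trivial $\gamma$ is hyperbolic, so $\rho_0(\gamma)$ has eigenvalues $\mu,\mu^{-1}$ with $\mu>1$; the irreducible representation $i$ then sends it to an element with eigenvalues $\mu^{d-1},\mu^{d-3},\dots,\mu^{1-d}$, which have $d$ pairwise distinct moduli. Discreteness and faithfulness are inherited because $i$ is a proper embedding and $\rho_0$ is discrete and faithful. In particular the Anosov locus inside $\cal H_d(S)$ is non-empty.

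Next I would invoke two properties of the Anosov condition. On one hand, being Anosov is an \emph{open} condition in the character variety (structural stability). On the other hand, within the Hitchin component it is also \emph{closed}: along a convergent sequence of Hitchin--Anosov representations the associated equivariant limit (Frenet) curves converge to a continuous, injective, hyperconvex curve $\xi\colon\partial\Gamma\to\cal F(\bb R^d)$, and the existence of such a hyperconvex Frenet curve certifies that the limit representation is again Anosov. Since $\cal H_d(S)$ is connected (Hitchin's theorem quoted above shows it is homeomorphic to a cell), the non-empty, open, and closed Anosov locus must be all of $\cal H_d(S)$.

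Finally, for a Borel--Anosov $\rho$ and any non-trivial $\gamma$, the attracting and repelling fixed points $\gamma^{\pm}\in\partial\Gamma$ map under the Frenet curve to a \emph{transverse} pair of complete flags, both preserved by $\rho(\gamma)$ by equivariance. Two transverse flags decompose $\bb R^d$ into $d$ lines, which are therefore the one-dimensional (real) eigenspaces of $\rho(\gamma)$, and the Anosov contraction/expansion estimates force the eigenvalue moduli to be strictly ordered, giving $\lambda_1(\rho(\gamma))>\dots>\lambda_d(\rho(\gamma))>0$; discreteness and faithfulness are immediate from the quasi-isometric embedding property enjoyed by Anosov representations. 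I expect the main obstacle to be the closedness step: showing that a limit of Hitchin representations retains a hyperconvex Frenet curve is precisely the delicate part of Labourie's analysis (his property (H)), and it is exactly what prevents the eigenspaces from colliding in the limit.
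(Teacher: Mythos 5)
The paper does not prove this statement: it is quoted as a black box from Fock--Goncharov \cite{FG} and Labourie \cite{Lab_Anosov}, so there is no internal proof to compare against. Your sketch is a faithful outline of Labourie's original argument (Fuchsian base case via the irreducible representation $i$, openness of the Anosov condition, closedness inside the connected Hitchin component, then loxodromicity from the pair of transverse $\rho(\gamma)$-invariant flags $\xi(\gamma^{\pm})$ together with the contraction estimates, and discreteness/faithfulness from the quasi-isometric embedding property plus torsion-freeness of $\pi_1(S)$), and it is correct at the level of detail given. You also correctly locate where the real content lies: the closedness step, i.e.\ showing that the limiting Frenet curve remains injective and hyperconvex (Labourie's Property (H)), is the heart of the matter and is exactly what your outline takes on faith. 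One small precision: what is shown to be open and closed is, strictly speaking, the locus of representations admitting a hyperconvex Frenet curve rather than the Anosov locus itself (that the two coincide on $\mathcal H_d(S)$ uses Guichard's converse), so the induction should be run on hyperconvexity, with the Anosov property deduced from it.
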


The \emph{limit cone} $\mathcal{L}_{\rho}$ of a Hitchin representation $\rho\in\mathcal H_d(S)$, introduced in \cite{Benoist_propasy}, is the closed cone of $\frak a^+$ generated by $\lambda(\rho(\gamma))$. This cone contains all the rays spanned by positive multiples of $\lambda(\rho(\gamma))$ for all $\gamma\in \Gamma$. The (positive) \emph{dual cone} of  $\mathcal{L}_{\rho}$ is defined as $\mathcal{L}_{\rho}^{*}=\{\phi\in\frak a^{*}: \phi|_{\mathcal{L}_{\rho}} \geq 0\}$. For every $i=1,\dots, d-1$, the \emph{$i$-th simple root} $\alpha_i\colon\frak a\to \bb R$, defined by $\alpha_i(\vec x)=x_i-x_{i+1}$, is an important example of element in the interior of $\mathcal{L}_{\rho}^{*}$. We will focus on linear functionals in the set
\[
\Delta=\left\{c_1\alpha_1+\dots+c_{d-1}\alpha_{d-1} \mid c_i\geq 0,\  \sum_ic_i>0\right\}
\]
which is contained in the interior of $\mathcal{L}_{\rho}^{*}$ by Theorem \ref{thm:loxodromic}.

Given $\phi\in\mathcal{L}_{\rho}^{*}$, the {\em $\phi$-length of $[\gamma]\in[\Gamma]$} is defined as 
\[
\ell^\phi_\rho([\gamma])=\phi(\lambda(\rho(\gamma)))>0,
\]
and its \emph{exponential growth rate} is 
\[
h^{\phi}(\rho)={\limsup_{T\to\infty} \frac{1}{T}\#\log\{[\gamma]\in[\Gamma]\mid \ell^\phi_\rho([\gamma])\leq T\}.}
\]
An important property of linear functionals in $\Delta$ is the following.
\begin{lem}
If $\phi\in\Delta$, then $h^{\phi}(\rho)\in(0,\infty)$.
\end{lem}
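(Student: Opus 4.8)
The plan is to reduce the statement to the already-available fact that $h^{\alpha_i}(\rho)=1$ for each simple root $\alpha_i$ (see \cite{PotrieSambarino} and the remark following Theorem \ref{thm:HitchinMain}), by proving that the $\phi$-length of any $\phi\in\Delta$ is \emph{bi-Lipschitz} to the $\alpha_1$-length. This is the same philosophy used elsewhere in the paper: bi-Lipschitz comparison of length functions forces the corresponding exponential growth rates to lie in the same range (cf. Lemma \ref{lem:tholo}). The comparison itself will come from a compactness argument on the limit cone $\mathcal{L}_\rho$.

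First I would fix an auxiliary Euclidean norm $\|\cdot\|$ on $\frak a$ and consider the compact link $K=\{v\in\mathcal{L}_\rho:\|v\|=1\}$ of the limit cone. Since both $\phi\in\Delta$ and $\alpha_1$ lie in the interior of the dual cone $\mathcal{L}_\rho^{*}$, they are strictly positive on $\mathcal{L}_\rho\setminus\{0\}$, hence on the compact set $K$; by continuity and compactness there is a constant $C\geq 1$ with $\frac{1}{C}\alpha_1(v)\leq \phi(v)\leq C\alpha_1(v)$ for all $v\in K$, and by homogeneity this extends to every $v\in\mathcal{L}_\rho$. Taking $v=\lambda(\rho(\gamma))\in\mathcal{L}_\rho$, which is nonzero with $\alpha_1(v)>0$ by Theorem \ref{thm:loxodromic}, yields
\[
\tfrac{1}{C}\,\ell^{\alpha_1}_\rho([\gamma])\leq \ell^{\phi}_\rho([\gamma])\leq C\,\ell^{\alpha_1}_\rho([\gamma])\qquad\text{for all }[\gamma]\in[\Gamma].
\]

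Next I would transfer these inequalities to the counting functions. The bound $\ell^{\phi}_\rho\leq C\ell^{\alpha_1}_\rho$ gives the inclusion $\{\ell^{\alpha_1}_\rho\leq T/C\}\subseteq\{\ell^{\phi}_\rho\leq T\}$, whence $h^{\phi}(\rho)\geq \frac{1}{C}h^{\alpha_1}(\rho)$; symmetrically $\ell^{\phi}_\rho\geq \frac{1}{C}\ell^{\alpha_1}_\rho$ gives $\{\ell^{\phi}_\rho\leq T\}\subseteq\{\ell^{\alpha_1}_\rho\leq CT\}$ and hence $h^{\phi}(\rho)\leq C\,h^{\alpha_1}(\rho)$. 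Since $h^{\alpha_1}(\rho)=1$, this sandwiches $h^{\phi}(\rho)$ in the interval $[\frac{1}{C},C]\subset(0,\infty)$, as claimed.

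The only delicate point is the strict positivity of $\phi$ and $\alpha_1$ on the whole punctured cone $\mathcal{L}_\rho\setminus\{0\}$, i.e.\ the identification of $\mathrm{int}(\mathcal{L}_\rho^{*})$ with the functionals that are positive away from the origin; this uses that $\mathcal{L}_\rho\subseteq\frak a^+$ is a salient closed cone, but the required membership $\phi,\alpha_1\in\mathrm{int}(\mathcal{L}_\rho^{*})$ is exactly what is recorded just before the lemma, so I expect no real obstacle once $K$ is known to be compact. I would note in passing that finiteness alone is even more elementary: writing $\phi=\sum_i c_i\alpha_i$ and picking $j$ with $c_j>0$, positivity of each $\alpha_i$ on the Jordan projections gives $\ell^{\phi}_\rho\geq c_j\,\ell^{\alpha_j}_\rho$ and hence $h^{\phi}(\rho)\leq \frac{1}{c_j}<\infty$; it is only the \emph{positivity} of $h^{\phi}(\rho)$ that genuinely requires comparing distinct simple-root lengths, which is what the compactness of $K$ provides.
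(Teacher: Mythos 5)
Your proof is correct, but it takes a genuinely different route from the paper's. The paper disposes of the lemma in two lines by quoting Lemma 2.7 of \cite{PotrieSambarino}, which asserts that a functional $\varphi\in\mathcal{L}_{\rho}^{*}$ has finite positive entropy if and only if it lies in the interior of $\mathcal{L}_{\rho}^{*}$; combined with the containment $\Delta\subset\mathrm{int}(\mathcal{L}_{\rho}^{*})$ recorded just before the statement and with $h^{\alpha_i}(\rho)=1$, the conclusion is immediate. You instead reprove the needed direction of that cited lemma by hand: compactness of the link $K$ of the limit cone plus strict positivity of $\phi$ and $\alpha_1$ on $K$ gives the bi-Lipschitz comparison $\frac{1}{C}\ell^{\alpha_1}_\rho\leq\ell^{\phi}_\rho\leq C\ell^{\alpha_1}_\rho$, and the sandwich $h^{\phi}(\rho)\in[\frac{1}{C},C]$ then follows from $h^{\alpha_1}(\rho)=1$ by the same counting-function manipulation used in Lemma \ref{lem:tholo}. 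Both arguments rest on the same two external inputs, namely $\Delta\subset\mathrm{int}(\mathcal{L}_{\rho}^{*})$ and the finiteness and positivity of the simple-root entropies, and you are right to single out the first as the delicate point: strict positivity of $\alpha_1$ on all of $\mathcal{L}_{\rho}\setminus\{0\}$ (equivalently, that the limit cone meets the walls of the Weyl chamber only at the origin) is strictly stronger than the positivity of each individual $\alpha_i(\lambda(\rho(\gamma)))$ supplied by Theorem \ref{thm:loxodromic}, and is itself a nontrivial fact about Hitchin representations rather than a formal consequence; since you defer to the same assertion the paper makes at that point, you are on the same footing as the paper there. What your version buys is a self-contained and more transparent argument (modulo $h^{\alpha_1}(\rho)=1$), together with the elementary upper bound $h^{\phi}(\rho)\leq\frac{1}{c_j}$ that needs no cone geometry at all; what the paper's version buys is brevity and an exact equivalence (interior of the dual cone versus finite positive entropy) that your one-directional compactness argument does not recover.
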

\begin{proof} Since we know 
$h^{\alpha_i}(\rho)=1$ from \cite[Theorem B]{Pollicot_Zeta}, by Lemma 2.7 in \cite{PotrieSambarino}, a linear functional $\varphi\in\mathcal{L}_{\rho}^*$ has finite and positive entropy if and only if it belongs to the interior of $\mathcal{L}_{\rho}^{*}$. Since elements in $\Delta$ are contained in the interior of $\mathcal{L}_{\rho}^{*}$, the entropy $h^{\phi}(\rho)$ is positive and finite for every $\phi\in\Delta$.
\end{proof}

The $\phi$-length function can be realized via the period of the \emph{$\phi$-Busemann cocycle}. To define \emph{$\phi$-Busemann cocycles}, we need to introduce the \emph{Frenet curve} $\xi_\rho$ for a Hitchin  representation $\rho$. Denote by $\sf F_d$ the space of complete flags in $\bb R^d$. Fock and Goncharov \cite{FG} and Labourie \cite{Lab_Anosov} show that for every Hitchin representation $\rho$, there exists a unique (up to conjugation) $\rho$-equivariant bi-H\"older continuous {\em Frenet curve} $\xi_\rho\colon\partial\Gamma\to \sf F_d$ which is transverse, positive, and it satisfies certain contraction properties. Moreover, the Frenet curve is \emph{dynamics preserving}:
if $\gamma^+\in\partial\Gamma$ is the attracting fixed point of $\gamma\in\Gamma$, then $\xi_\rho(\gamma^+)$ is the attracting eigenflag of $\rho(\gamma)$. We refer to \cite[Thm 1.15]{FG} and \cite[Thm 4.1]{Lab_Anosov} for precise statements.

We are now ready to define $\phi$-Busemann cocycles for $\phi\in\Delta$. Fix $F_0\in\sf F_d$ and observe that for every $F\in\sf F_d$, there exists $k\in\rm{SO}(d)$ such that $F=kF_0$. Every $M\in \rm{SL}(d,\bb R)$ can be written as $M=L(\exp{\sigma(M)})U$, for some $L\in\rm{SO}(d)$, $\sigma(M)\in\frak a$ and $U$ is unipotent and upper triangular. This is known as the Iwasawa decomposition of $M$. The \emph{Iwasawa cocycle} $B\colon \rm{SL}(d,\bb R)\times \sf F_d\to\frak a$ is defined as $B(A,F)=\sigma(Ak)$.

The \emph{vector valued Busemann cocycle} $B_\rho\colon\Gamma\times\partial\Gamma\to\frak a$ of a Hitchin representation $\rho$ with Frenet curve $\xi_\rho$ is
\[
B_\rho(\gamma,x)=B(\rho(\gamma),\xi_\rho(x)).
\]
For every linear functional $\phi\in\Delta$ we set $B^\phi_\rho(\gamma,x)=\phi(B_\rho(\gamma,x))$.
Lemma 7.5 in \cite{Sambarino_Quantitative} directly shows that the cocycle $B^{\phi}_{\rho}$ encodes the $\phi$-length via the equality $B_\rho^\phi(\gamma,\gamma^+)=\ell^\phi_\rho([\gamma])$, for every $\gamma\in\Gamma$.
\smallskip

The $\phi$-Busemann cocycle is an example of a H\"older cocycle.
\begin{defn}\label{def:cocycle} A \emph{H\"older cocycle} is a map $c\colon\Gamma\times\partial\Gamma\to \bb R$ such that
\[
c(\gamma\eta,x)=c(\gamma,\eta x)+c(\eta,x)
\]
for any $\gamma,\eta\in\Gamma$ and $x\in\partial\Gamma$ and there exists $\alpha\in(0,1)$ such that $c(\gamma,\cdot)$ is $\alpha$-H{\"o}lder for all $\gamma\in\Gamma$. 
\end{defn}
For a general H\"older cocycle, let $\ell_c(\gamma)=c(\gamma,\gamma^+)$ denote the $c$-length (also known as {\em period}) of $\gamma\in\Gamma$. Two H\"older cocycles are \emph{cohomologous} if they have the same periods. We can define the \emph{exponential growth rate} $h_c$ of a general H{\"o}lder cocycle $c:\Gamma\times\partial\Gamma \to \mathbb{R}$ as
\[
h_c=\limsup_{T\to\infty} \frac{1}{T}\#\log\{[\gamma]\in[\Gamma]\mid \ell_c(\gamma)\leq T\}.
\]
Note that the exponential growth rate $h^{\phi}(\rho)$ of the linear functional $\phi\in \Delta$ is the same as the exponential growth rate $h_{B^{\phi}_{\rho}}$ of the cocycle $B^{\phi}_{\rho}$. The number $h^\phi(\rho)$ is also called the \emph{topological entropy} of a Hitchin representation $\rho$ with respect to the $\phi$-length. Indeed, $h^\phi(\rho)$ is the topological entropy of a {\em translation flow} as we recall in the next section.

\subsection{Translation flows and metric Anosov flows}
In this subsection, we recall how to equip Hitchin representations with translation flows which will allow us to use thermodynamic formalism tools to study them. We start by recalling the construction of Sambarino's translation flow \cite{Sambarino_Quantitative} which is analogous to Hopf's parametrization of the geodesic flow of a negatively curved manifold.

Let $\partial^2\Gamma=\{(x,y)\in\partial \Gamma \times \partial \Gamma: x\neq y\}$. The \emph{translation flow} $\{\varphi_t\}_{t\in\bb R}$ on $\partial^{2}\Gamma \times \mathbb{R}$ is 
$$\varphi_t(x,y,s)=(x,y,s-t).$$
Given a H\"older cocycle $c\colon\Gamma\times\partial\Gamma\to \bb R$, the group $\Gamma$ acts on $\partial^{2}\Gamma \times \mathbb{R}$ by 
$$\gamma(x,y,t)=(\gamma x, \gamma y, t-c(\gamma, y) ).$$
The translation flow $\{\varphi_t\}_{t\in\bb R}$ then descends to the quotient $M_c$ of $\partial^2 \Gamma \times \mathbb{R}$ by the $\Gamma$-action via $c$. 
Sambarino proves a reparametrization theorem for the translation flow $\varphi_t$.

 First, recall that two (H{\"o}lder) continuous flows $\psi_t$ and $\psi_t'$ on compact metric spaces $X$ and $Y$, respectively, are {\em (H\"older) conjugated} if there exists a (bi-H\"older) homeomorphism $h\colon X\to Y$ such that $\psi_t'=h\circ \psi_t \circ h^{-1}$ for every $t\in\mathbb R$.

\begin{thm}[{Sambarino \cite[Thm 3.2]{Sambarino_Quantitative}}]\label{thm:SambarinoReparametrization}
Given a H{\"o}lder cocycle $c$ with non-negative periods and $h_c\in(0,\infty)$, the action defined by the cocycle $c$ on $\partial^2 \Gamma \times \mathbb{R}$ is proper and co-compact. Moreover, the translation flow $\varphi_t$ on the quotient space $M_c$ is H\"older conjugated to a H{\"o}lder reparametrization of the geodesic flow on $T^{1}X_{0}$, where $X_0$ is a(ny) hyperbolic structure on $S$. The translation flow on $M_c$ is topologically mixing and its topological entropy equals $h_c$.
\end{thm}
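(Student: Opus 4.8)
The plan is to fix an auxiliary hyperbolic structure $X_0$ on $S$ and compare the $c$-action with the one coming from $X_0$. Write $c_0$ for the Busemann cocycle of $X_0$; in Hopf coordinates the quotient of $\partial^2\Gamma\times\bb R$ by the $c_0$-action is the unit tangent bundle $T^1X_0$, and the translation flow descends there to the geodesic flow $\Phi$, which is topologically mixing by classical results. Everything thus reduces to understanding $c$ relative to $c_0$. The first step is Ledrappier's dictionary between Hölder cocycles and Hölder functions (see \cite{Sambarino_Quantitative}): up to Livšic cohomology a Hölder cocycle is determined by its periods, and to $c$ one attaches a Hölder function $f$ on $T^1X_0$ with $\lambda(f,\tau)=\ell_c(\gamma)$ for every periodic orbit $\tau$ corresponding to $[\gamma]\in[\Gamma]$. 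Since the periods of $c$ are non-negative, and positive on nontrivial classes so that $\varphi_t$ has positive-length orbits, a positive-Livšic argument (in the spirit of Remark \ref{rem Liv}) lets me replace $f$ by a strictly positive representative of its cohomology class. Then $\Phi^f$ is a genuine Hölder reparametrization of $\Phi$ whose periods are exactly the $\ell_c(\gamma)$.

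The second and technically central step is to promote this period-matching to an actual conjugacy. I would build a $\Gamma$-equivariant map $\partial^2\Gamma\times\bb R\to\partial^2\Gamma\times\bb R$ intertwining the $c$-action with the $c_0$-action, of the form $(x,y,s)\mapsto(x,y,\beta(x,y,s))$, where $\beta(x,y,\cdot)$ is the increasing homeomorphism of $\bb R$ obtained by integrating the lift $\tilde f$ of $f$ along the flow line through $(x,y,s)$. A direct computation shows that the required intertwining relation $\beta(\gamma x,\gamma y,s-c_0(\gamma,y))=\beta(x,y,s)-c(\gamma,y)$ is precisely the integrated form of the period identity $\lambda(f,\tau)=\ell_c(\gamma)$, so it holds once $c$ is taken to be the cohomologous cocycle attached to $f$; the bi-Hölder regularity of $\beta$ comes from that of $f$ and of the boundary data. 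This equivariant homeomorphism descends to a Hölder conjugacy between the translation flow on $M_c$ and $\Phi^f$ on $T^1X_0$. In particular it transfers properness and cocompactness of the $c_0$-action (i.e. compactness of $T^1X_0$) to the $c$-action, and it establishes the claim that $\varphi_t$ on $M_c$ is Hölder conjugate to a Hölder reparametrization of the geodesic flow. Independence of the choice of $X_0$ follows because the geodesic flows of any two hyperbolic structures are themselves mutual Hölder reparametrizations (as in Lemma \ref{lem: rep}).

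It then remains to read off mixing and entropy. For topological mixing I would argue that $\Phi$ admits no global cross-section: such a section would yield a closed $1$-form positive on the flow, hence a class in $H^1(T^1X_0;\bb R)\cong H^1(S;\bb R)$ (the isomorphism holding because the Euler number $\chi(S)$ is nonzero) pairing positively with every periodic orbit; this is impossible since $S$ carries null-homologous closed geodesics, e.g. separating simple closed curves, along which such a pairing vanishes. Admitting a global cross-section is an orbit-equivalence invariant, so neither $\Phi^f$ nor its conjugate on $M_c$ is a suspension, a fortiori not a constant-roof suspension; by the standard dichotomy for transitive (metric) Anosov flows, whose period spectrum is then non-arithmetic, the flow is topologically mixing. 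For the entropy, Lemma \ref{lem:PressueZero} characterizes $h(\Phi^f)$ as the unique $h$ with $P(\Phi,-hf)=0$ and identifies it with $\limsup_{T\to\infty}\frac1T\log\#\{\tau:\lambda(f,\tau)\le T\}$; since $\lambda(f,\tau)=\ell_c(\gamma)$ this growth rate is exactly $h_c$, and as topological entropy is a conjugacy invariant, the translation flow on $M_c$ has entropy $h_c$.

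The main obstacle is the second step: arranging the fiberwise reparametrization $\beta$ to be simultaneously $\Gamma$-equivariant and bi-Hölder. This is where Ledrappier's correspondence and the Livšic machinery do the real work, and it is also where the metric-Anosov (rather than smooth Anosov) nature of the translation flow must be accommodated, since $\Phi^f$ need not be differentiable; the thermodynamic formalism for such flows developed in \cite{Pollicot_SmaleFlow} is what ensures that the conjugacy interacts correctly with pressure and entropy.
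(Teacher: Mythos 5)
The paper does not prove this statement: it is quoted, with attribution, from Sambarino \cite[Thm 3.2]{Sambarino_Quantitative}, so there is no internal argument to measure yours against. That said, your sketch does follow the lines of Sambarino's original proof --- Ledrappier's cocycle/function dictionary (Theorem \ref{thm:Ledrappier}), a positive representative of the cohomology class, an equivariant fiberwise time change $(x,y,s)\mapsto(x,y,\beta(x,y,s))$ intertwining the two $\Gamma$-actions, a homological obstruction to a global cross-section for mixing, and Abramov/Bowen--Ruelle (Lemma \ref{lem:PressueZero}) for the entropy. The mixing argument in particular is correct and is essentially the standard one: a positive H\"older function on $T^1X_0$ whose periods lie in a discrete subgroup would produce an integral class in $H^1(T^1X_0;\mathbb{Z})$ pairing positively with every closed orbit, which is impossible because separating simple closed geodesics lift to (rationally) null-homologous orbits; this is also why the conclusion holds for \emph{every} cocycle satisfying the hypotheses, with no extra non-arithmeticity assumption.

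Two points need repair or more honesty. First, your justification for replacing $f$ by a strictly positive representative --- ``the periods are non-negative, and positive on nontrivial classes'' --- is not sufficient: positivity of all periods does not imply cohomologous to a positive function. The correct criterion is Ledrappier's equivalence, recorded in the paper as Lemma \ref{lem:PosEntropy}: cohomologous to a positive function $\iff$ $\int_\tau f\geq\kappa\,p(\tau)$ uniformly $\iff$ $h_f\in(0,\infty)$. This is the one place, besides the final entropy identification, where the hypothesis $h_c\in(0,\infty)$ does real work; without it, properness and cocompactness of the $c$-action can fail, and your sketch elides exactly this. Second, the ``technically central step'' you name --- making $\beta$ simultaneously $\Gamma$-equivariant, bi-H\"older, and compatible with the given cocycle $c$ rather than merely with some cocycle having the same periods --- is the actual content of the theorem: the naive $\beta(x,y,s)=\int_0^s\tilde f$ yields a cocycle over the action on $\partial^2\Gamma\times\mathbb{R}$ depending on all of $(x,y,s)$, and reducing it to a boundary cocycle cohomologous to $c$ requires the Liv\v{s}ic-type rigidity for H\"older cocycles (equal periods imply cohomologous) underlying Theorem \ref{thm:Ledrappier}. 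You correctly locate the difficulty but do not discharge it, so as written the proposal is an accurate roadmap of \cite{Sambarino_Quantitative} rather than a self-contained proof.
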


From now on we will always assume that the cocycle $c$ is of the form $B^\phi_\rho$ for some Hitchin representation $\rho$ and $\phi\in\Delta$. In this case, the hypotheses of Theorem \ref{thm:SambarinoReparametrization} are satisfied. We will still denote the translation flow associated to $B^\phi_\rho$ by $\varphi_t$ and write $M_\rho^\phi=M_{B^\phi_\rho}$.

We want to use Thermodynamic formalism methods for the translation flow $\varphi_t$ and we briefly explain why we can do so in this setting. While it is well known that a H{\"o}lder reparametrization of an Anosov flow is an Anosov flow \cite[Page 122]{AnosovSinai}, H{\"o}lder conjugacy does not necessarily preserve the Anosov property \cite{Gogolev}. In order to construct symbolic codings and use  results from the Thermodynamic formalism for Hitchin representations, we will work in the more general setting of metric Anosov flows. \emph{Metric Anosov flows (or Smale flows)} were introduced by Pollicott in \cite{Pollicot_SmaleFlow} to generalize classical results for Anosov flows and Axiom A flows. In particular, he constructed a symbolic coding for metric Anosov flows \cite{Pollicot_SmaleFlow}.

Our definition of metric Anosov flow, which is better suited to our purposes, differs slightly from the original definition in \cite{Pollicot_SmaleFlow}. Recall for any continuous flow $\psi_t$ on a compact metric space $X$, the \emph{local stable set} of a point $x\in X$ is defined for $\epsilon>0$ as
\[
W^s_{\epsilon}(x)=\{y\in X: d(\psi_t x , \psi_t y) \leq \epsilon,\, \text{  }\forall t \geq 0 \textnormal{ and }  d(\psi_t x, \psi_t y) \to 0 \textnormal{ as } t \to \infty\}
\]
The \emph{local unstable set} of a point $x$ for $\epsilon>0$ is
\[
W^u_{\epsilon}(x)=\{y\in X: d(\psi_{-t} x , \psi_{-t} y) \leq \epsilon,\, \text{  }\forall t \geq 0 \textnormal{ and }  d(\psi_{-t} x, \psi_{-t} y) \to 0 \textnormal{ as } t \to \infty\}
\]
\begin{defn}\label{def, MetricAnosov}
A continuous flow $\psi$ on a compact metric space $X$ is \emph{metric Anosov} if 
\begin{enumerate}
    \item  There exist positive $C, \lambda, \epsilon$ and $\alpha\in(0,1]$ such that 
    $$d(\psi_tx, \psi_ty)\leq C e^{-\lambda t}d(x,y)^{\alpha} \text{   when $y\in W^{s}_{\epsilon}(x)$ and $t\geq 0$}$$
and    
 $$d(\psi_{-t}x, \psi_{-t}y)\leq C e^{-\lambda t}d(x,y)^{\alpha} \text{   when $y\in W^{u}_{\epsilon}(x)$ and $t\geq 0$}.$$
 \item There exists $\delta>0$ and a continuous map $\upsilon$ on the set $\{(x,y)\in X\times X: d(x,y) \leq \delta \}$ such that $\upsilon=\upsilon(x,y)$ is the unique value for which  $W^u_{\epsilon}(\psi_{\upsilon}x) \cap W^s_{\epsilon}(y)$ is nonempty.  The set $W^u_{\epsilon}(\psi_{\upsilon}x) \cap W^s_{\epsilon}(y)$ consists of a single point, denoted as $\braket{x,y}$.
\end{enumerate}
\end{defn}

\begin{rem}\label{ConjugacyPreservesMetricAnosov}
 Metric Anosov flows in Definition \ref{def, MetricAnosov} have a symbolic coding. Indeed, these flows verify one of the key properties (\cite[Lemma 1.5]{PeriodicOrbits-Bowen}) used by Bowen to build symbolic codings for Axiom A flows. From this, and after adapting Bowen's arguments to accommodate for the exponent $\alpha$ in the definition, we observe that the metric Anosov flows in Definition \ref{def, MetricAnosov} satisfy the expansivity, tracing and specification properties \cite[Prop 1.6 and Section 2]{PeriodicOrbits-Bowen}. These are also the crucial properties used by Pollicott in his construction of symbolic codings for Smale flows \cite{Pollicot_SmaleFlow}.
\end{rem}

\begin{prop} \label{prop,ReparConjPreserveMetriAnosov}
Let $\psi^1$ be a H{\"o}lder continuous metric Anosov flow on a compact metric space $X$. 
If $\psi^2$ is a flow on a compact metric space $Y$ and $\psi^2$ is H\"older conjugate to $\psi^1$, then $\psi^2$ is a H{\"o}lder continuous metric Anosov flow.
\end{prop}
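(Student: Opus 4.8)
The plan is to transport the entire local hyperbolic structure of $\psi^1$ through the bi-H\"older conjugacy $h\colon X\to Y$, where $\psi^2_t=h\circ\psi^1_t\circ h^{-1}$. Write $d_Y(h(a),h(b))\le K\,d_X(a,b)^\beta$ and $d_X(h^{-1}(p),h^{-1}(q))\le K'\,d_Y(p,q)^{\beta'}$ with $\beta,\beta'\in(0,1]$. That $\psi^2$ is itself a H\"older continuous flow is immediate, since $(t,p)\mapsto h(\psi^1_t(h^{-1}(p)))$ is a composition of H\"older maps. The first genuine step is a correspondence lemma for the local invariant sets: since $h$ intertwines the flows, if $y\in W^s_r(x)$ for $\psi^1$ then $d_Y(\psi^2_t h(x),\psi^2_t h(y))=d_Y(h(\psi^1_tx),h(\psi^1_ty))\le K\,d_X(\psi^1_tx,\psi^1_ty)^\beta$, so the uniform bound and the decay to zero both pass through $h$ and give $h(W^s_r(x))\subseteq W^s_{Kr^\beta}(h(x))$; the analogous inclusions hold for unstable sets and, with $\beta'$ and $K'$, for $h^{-1}$.

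Granting this, condition (1) of Definition \ref{def, MetricAnosov} transfers by a direct estimate. Fix $\epsilon_2$ so small that $K'\epsilon_2^{\beta'}\le\epsilon_1$, where $\epsilon_1$ is the contraction radius for $\psi^1$. Given $w\in W^s_{\epsilon_2}(p)$ for $\psi^2$, set $x=h^{-1}(p)$ and $y=h^{-1}(w)$; by the correspondence lemma $y\in W^s_{\epsilon_1}(x)$, so $d_X(\psi^1_tx,\psi^1_ty)\le C e^{-\lambda t}d_X(x,y)^\alpha$. Pushing this through the H\"older bound for $h$ and bounding $d_X(x,y)\le K'd_Y(p,w)^{\beta'}$ yields exponential contraction for $\psi^2$ with rate $\lambda\beta$ and exponent $\alpha\beta\beta'\in(0,1]$; the unstable estimate is identical with $\psi_{-t}$.

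The substance of the proof is condition (2), the local product structure. I would define the $\psi^2$-data by conjugation: $\upsilon_Y(p,q):=\upsilon_X(h^{-1}p,h^{-1}q)$ and $\langle p,q\rangle_Y:=h\bigl(\langle h^{-1}p,h^{-1}q\rangle_X\bigr)$, both continuous. Uniqueness is the easy direction: any point lying, for some time $t$, in $W^u_{\epsilon_2}(\psi^2_tp)\cap W^s_{\epsilon_2}(q)$ maps under $h^{-1}$, using $h^{-1}(\psi^2_tp)=\psi^1_t(h^{-1}p)$ and the correspondence lemma, into $W^u_{\epsilon_1}(\psi^1_t h^{-1}p)\cap W^s_{\epsilon_1}(h^{-1}q)$, where the choice $K'\epsilon_2^{\beta'}\le\epsilon_1$ is used. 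By the uniqueness clause for $\psi^1$ the time must equal $\upsilon_X$ and the point must equal $\langle\cdot,\cdot\rangle_X$; hence for $\psi^2$ the intersection meets at most one point and occurs for at most one time.

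The main obstacle is existence, and here one cannot simply push the $\psi^1$-bracket point forward: a H\"older map with $\beta<1$ inflates small neighborhoods, so the image of a point in $W^{u/s}_{\epsilon_1}$ need not land in $W^{u/s}_{\epsilon_2}$ for the small $\epsilon_2$ fixed above. The remedy is to shrink the domain rather than the target. I would use that $\upsilon_X$ and $\langle\cdot,\cdot\rangle_X$ are continuous with $\upsilon_X(x,x)=0$ and $\langle x,x\rangle_X=x$ (these follow from $\psi^1$-uniqueness, since $x\in W^u_{\epsilon_1}(\psi^1_0x)\cap W^s_{\epsilon_1}(x)$ trivially). Consequently, for any prescribed $r>0$ there is $\delta'>0$ so that $d_X(x,y)\le\delta'$ forces $\langle x,y\rangle_X\in W^u_r(\psi^1_{\upsilon_X}x)\cap W^s_r(y)$. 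Choosing $\delta_2$ small enough that $d_Y(p,q)\le\delta_2$ makes $d_X(h^{-1}p,h^{-1}q)\le K'\delta_2^{\beta'}\le\delta'$ with $r$ so small that $Kr^\beta\le\epsilon_2$, pushing forward by $h$ places $\langle p,q\rangle_Y$ in $W^u_{\epsilon_2}(\psi^2_{\upsilon_Y(p,q)}p)\cap W^s_{\epsilon_2}(q)$. Combined with the uniqueness above, this shows $\upsilon_Y(p,q)$ is the unique time realizing a nonempty intersection and that the intersection is the single point $\langle p,q\rangle_Y$, completing the verification of Definition \ref{def, MetricAnosov} for $\psi^2$.
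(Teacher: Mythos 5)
Your proof is correct and follows essentially the same route as the paper: transport the contraction estimates and the local product structure through the bi-H\"older conjugacy, obtaining the new exponent $\alpha\beta\beta'$ and defining $\upsilon_Y$ and $\langle\cdot,\cdot\rangle_Y$ by conjugation. The only difference is that you spell out the verification of condition (2) --- in particular the point that a H\"older map can inflate neighborhoods, so one must shrink the domain using continuity of the bracket together with the contraction estimate --- which the paper's proof states in one line without detail.
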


\begin{proof}

We show here that  a H{\"o}lder conjugacy preserve metric Anosov properties.  
By hypothesis, there exists a bi-H{\"o}lder homeomorphism $h:X\to Y$ with H{\"o}lder exponent $\alpha_0\in(0,1]$ (for both $h$ and $h^{-1}$) such that $h\circ \psi_t^1= \psi_t^2\circ h$.  We want to show that $\psi_t^2$ also satisfies the metric Anosov property.

Given $x_2\in Y$, we want to find suitable parameters so that conditions (1) and (2) in  Definition \ref{def, MetricAnosov} hold. We show condition (1) for local stable sets. Suppose $x_2=h(x_1)$.  Because $\psi_t^1$ is metric Anosov, we can find $\varepsilon_1, C_1, \lambda_1, \delta_1$ positive and $\alpha_1\in(0,1]$ so that
\[
W^s_{\epsilon_1}(x_1)=\{y_1\in X: d_X(\psi^1
_t x_1,\psi^1_t y_1)\leq \epsilon_1 \text{ and } d_X(\psi^1_t x_1 , \psi^1_t y_1) \leq C_1 e^{-\lambda_1 t} d_X(x_1,y_1)^{\alpha_1}, \text{  } \forall t \geq 0 \}.
\]
Note
\begin{align*}
    d_Y(\psi^2_th(x_1), \psi^2_t h(y_1))&=d_Y(h(\psi_t^1x_1), h(\psi_t^1 y_1))\leq C_h d_X(\psi^1_tx_1,\psi^1_ty_1)^{\alpha_0}\\
    &\leq C_h(C_1 e^{-\lambda_1t} d_X(x_1,y_1)^{\alpha_1})^{\alpha_0}
    \leq C_2 e^{-\lambda_1 \alpha_0 t}d_Y(h (x_1), h (y_1))^{\alpha_0^2\alpha_1} 
\end{align*}
Here we have used the H{\"older} properties of both $h$ and $h^{-1}$ and $C_h$ is a constant from the H{\"older} property of $h$. This implies that we can find $\varepsilon_2 >0$ so that $W^{s}_{\varepsilon_2}(x_2)$ satisfies condition (1) in Definition \ref{def, MetricAnosov} for $\psi^2_t$ with positive $C_2, \lambda_2=\lambda_1 \alpha_0$ and $\alpha_2= \alpha_0^2\alpha_1$. The argument is similar for local unstable sets. Furthermore, given $x_2, y_2\in Y$ close enough, condition (2) in  Definition \ref{def, MetricAnosov} can be verified by taking $\langle x_2,y_2\rangle= h(\langle h^{-1} (x_2), h^{-1} (y_2) \rangle)$. We conclude that $\psi^2_t$ is metric Anosov.
\end{proof}

We now show that the translation flow is a metric Anosov flow.
\begin{prop}\label{translationMetricAnosov}
The translation flow $\varphi_t$ on $M^\phi_{\rho}$ is a metric Anosov flow which admits a symbolic coding with H{\"o}lder continuous roof functions.
\end{prop}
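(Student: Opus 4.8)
The plan is to reduce the statement to the two facts already established in this section: that a H\"older conjugacy preserves the metric Anosov property (Proposition~\ref{prop,ReparConjPreserveMetriAnosov}) and that a metric Anosov flow in the sense of Definition~\ref{def, MetricAnosov} carries a symbolic coding with H\"older roof functions (Remark~\ref{ConjugacyPreservesMetricAnosov}). By Theorem~\ref{thm:SambarinoReparametrization}, the translation flow $\varphi_t$ on $M^\phi_\rho$ is H\"older conjugate to a H\"older reparametrization $\Phi^g$ of the geodesic flow $\Phi$ on $T^1X_0$ of an auxiliary hyperbolic structure $X_0$. In view of Proposition~\ref{prop,ReparConjPreserveMetriAnosov}, it therefore suffices to prove that $\Phi^g$ is a H\"older continuous metric Anosov flow; the symbolic coding then follows at once from Remark~\ref{ConjugacyPreservesMetricAnosov}.

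First I would record that the geodesic flow $\Phi$ itself is metric Anosov. Since $X_0$ is a closed hyperbolic surface, $\Phi$ is a smooth contact Anosov flow: its local stable and unstable sets coincide with the weak stable and unstable manifolds, along which $\Phi$ contracts exponentially, so condition (1) of Definition~\ref{def, MetricAnosov} holds with exponent $1$, and the transverse intersection of these manifolds provides the local product structure $\braket{x,y}$ required by condition (2). Thus $\Phi$ satisfies Definition~\ref{def, MetricAnosov}.

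Next I would pass from $\Phi$ to its H\"older reparametrization $\Phi^g$. Because $g$ is a positive H\"older function on the compact space $T^1X_0$, it is bounded away from $0$ and $\infty$, so the inverse reparametrization time grows linearly in $t$ uniformly in $x$; consequently $\Phi^g$ and $\Phi$ have exactly the same orbits and the same local stable and unstable sets as subsets of $T^1X_0$. The local product structure of condition (2) is inherited verbatim, as it depends only on these sets and on the orbit foliation, both of which are untouched by the time change. For condition (1) one transports the exponential contraction of $\Phi$ along orbits through the reparametrization: since the time-change cocycle is only H\"older, an estimate $d(\Phi_tx,\Phi_ty)\le Ce^{-\lambda t}d(x,y)$ on stable sets is converted into one of the same shape $d(\Phi^g_tx,\Phi^g_ty)\le C'e^{-\lambda' t}d(x,y)^{\alpha}$ for $\Phi^g$, at the cost of replacing the Lipschitz exponent by some $\alpha\in(0,1]$ and adjusting the constants, which is exactly the flexibility built into Definition~\ref{def, MetricAnosov}; alternatively one may invoke the classical fact that a H\"older reparametrization of an Anosov flow is again (metric) Anosov \cite[Page~122]{AnosovSinai}. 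Either way $\Phi^g$ is metric Anosov, so Proposition~\ref{prop,ReparConjPreserveMetriAnosov} applied to the conjugacy of Theorem~\ref{thm:SambarinoReparametrization} shows that $\varphi_t$ is metric Anosov, and Remark~\ref{ConjugacyPreservesMetricAnosov} (via \cite{Pollicot_SmaleFlow, PeriodicOrbits-Bowen}) then supplies the symbolic coding with H\"older roof functions.

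The main obstacle is the reparametrization step: the time change is merely H\"older, so the hyperbolicity estimate of condition (1), which holds with a genuine exponential rate for the smooth flow $\Phi$, must be tracked carefully through the inverse reparametrization to confirm that it survives, necessarily with a degraded H\"older exponent rather than a Lipschitz one, while one checks that the orbit-theoretic local product structure of condition (2) is unaffected. This is precisely why Definition~\ref{def, MetricAnosov} is formulated with an exponent $\alpha\in(0,1]$ in place of insisting on $\alpha=1$.
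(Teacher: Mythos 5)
Your proof is correct and follows essentially the same route as the paper: both reduce via Theorem~\ref{thm:SambarinoReparametrization} to a H\"older reparametrization of the geodesic flow on $T^1X_0$, invoke \cite[Page 122]{AnosovSinai} for the Anosov property of the reparametrized flow, transfer it through the conjugacy using Proposition~\ref{prop,ReparConjPreserveMetriAnosov}, and obtain the coding from Remark~\ref{ConjugacyPreservesMetricAnosov}. The extra detail you supply on tracking the contraction estimates through the time change is a welcome elaboration but not a different argument.
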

\begin{proof}
Let $X_0$ be a base hyperbolic surface. The geodesic flow on $T^{1}X_{0}$ is a (metric) Anosov flow which admits a symbolic coding with H{\"o}lder roof function. From Theorem \ref{thm:SambarinoReparametrization}, we know that the translation flow $\varphi_t$ associated to the cocycle $B_{\rho}^{\phi}$ defined on the quotient space $M^\phi_{\rho}$ is H{\"o}lder conjugate to a H{\"o}lder reparametrization of the geodesic flow $\psi_t$ on $T^{1}X_{0}$.  In other words, there exists a H\"older continuous function $f\colon T^1X_0\to \bb R_{\geq 0}$ such that $(h^{-1}\circ\varphi\circ h)_{t}=\psi^f_t$ for all $t\in \bb R$. Since $\psi^f_t$ is an Anosov flow (see \cite[Page 122]{AnosovSinai}), $\phi_t$ is a metric Anosov flow by Proposition \ref{prop,ReparConjPreserveMetriAnosov}.

The coding is therefore preserved. The roof function remains H{\"o}lder by either H{\"o}lder conjugacy or H{\"o}lder reparametrization as it is given by an opportune composition of H\"older functions.  The translation flow is therefore a metric Anosov flow on a compact metric space that admits a symbolic coding with H{\"o}lder roof function.
\end{proof}

\subsection{Reparametrization functions and independence lemma}
After constructing flows for Hitchin representations, the next thing that we want to do is to define the \emph{reparametrization function} in the setting of Hitchin representations. In Section \ref{ssec:reparam}, given a positive H\"older continuous function $f$ on the unit tangent bundle $T^1X_{\rho}$ of a convex real projective structure, we defined a reparametrization of the flow by time change. This can be done more generally for any H\"older continuous flow on a compact metric space $X$. In particular, given two Hitchin representations $\rho_1$ and $\rho_2$ in $\cal H_{d}(S)$, we will show the existence of a positive H\"older continuous \emph{reparametrization function} $f_{\rho_1}^{\rho_2}: M^\phi_{\rho_1} \to \mathbb{R}_{> 0}$ that encodes the $\phi$-length spectrum of $\rho_2$. 

We start with a lemma that relates the positivity of the H{\"o}lder reparametrization function to the positivity of its entropy. The \emph{entropy} of a H{\"o}lder function $f$ on a compact metric space $X$ equipped with a metric Anosov flow is defined as 
\[
h_f=\limsup_{T\to\infty} \frac{1}{T}\#\log\left\{
\tau \text{ periodic} \mid \int_{\tau} f\leq T\right\}.
\]

\begin{lem}[{Ledrappier \cite[Lemma 1]{Ledrappier}, Sambarino \cite[Lemma 3.8]{Sambarino_Quantitative}}]\label{lem:PosEntropy}
Let $f:X\to \mathbb{R}$ be a H{\"o}lder continuous function  with non-negative periods on a compact metric space $X$ equipped with a topological transitive  metric Anosov flow. The following are equivalent:
\begin{enumerate}
    \item the function $f$ is cohomologous to a positive H{\"o}lder continuous function.
    \item there exists $\kappa>0$ such that $\int_{\tau} f>\kappa p(\tau)$ where $p(\tau)$ is the period of $\tau$.
    \item the entropy $h_f\in (0,\infty)$. 
\end{enumerate}
\end{lem}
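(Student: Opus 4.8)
The plan is to prove the cyclic chain $(1)\Rightarrow(2)\Rightarrow(3)\Rightarrow(1)$, working throughout with the symbolic coding and the closing/specification properties of the metric Anosov flow recorded in Proposition \ref{translationMetricAnosov} and Remark \ref{ConjugacyPreservesMetricAnosov}. Two standing facts will be used repeatedly. First, the topological entropy $h_{\mathrm{top}}$ of the flow, i.e. the exponential growth rate of $\#\{\tau : p(\tau)\le R\}$, is finite because the flow admits a symbolic coding, and strictly positive because it is H\"older conjugate to a reparametrization of a hyperbolic geodesic flow (Theorem \ref{thm:SambarinoReparametrization}). Second, since $f$ is H\"older on the compact space $X$ it is bounded, $\|f\|_\infty<\infty$, and by hypothesis $\int_\tau f\ge 0$ for every periodic orbit $\tau$.

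For $(1)\Rightarrow(2)$: if $f$ is cohomologous to a positive H\"older function $g$, then integrating over a periodic orbit annihilates the coboundary (Liv\v{s}ic), so $\int_\tau f=\int_\tau g$; by compactness $g\ge \kappa_0>0$ for some $\kappa_0$, whence $\int_\tau f\ge \kappa_0\,p(\tau)>\tfrac{\kappa_0}{2}p(\tau)$, which is $(2)$ with $\kappa=\kappa_0/2$. For $(2)\Rightarrow(3)$: the inequality $\int_\tau f>\kappa\,p(\tau)$ gives the inclusion $\{\tau:\int_\tau f\le T\}\subseteq\{\tau:p(\tau)<T/\kappa\}$, so counting against $h_{\mathrm{top}}$ yields $h_f\le h_{\mathrm{top}}/\kappa<\infty$; conversely $\int_\tau f\le \|f\|_\infty\,p(\tau)$ gives the reverse inclusion $\{\tau:p(\tau)\le T/\|f\|_\infty\}\subseteq\{\tau:\int_\tau f\le T\}$ and hence $h_f\ge h_{\mathrm{top}}/\|f\|_\infty>0$. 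Thus $h_f\in(0,\infty)$.

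The substantial direction is $(3)\Rightarrow(1)$, which I would factor as $(3)\Rightarrow(2)\Rightarrow(1)$ via ergodic optimization. Set $m(f)=\inf_{\mu\in\mathcal{M}^\psi}\int f\,\mathrm{d}\mu$; by the closing lemma the periodic-orbit averages are dense among the values $\int f\,\mathrm{d}\mu$, so $m(f)=\inf_\tau \int_\tau f/p(\tau)\ge 0$, and $(2)$ is exactly the assertion $m(f)>0$. To obtain $(3)\Rightarrow(2)$ I would argue contrapositively: if $m(f)=0$, then, gluing many near-minimizing orbit segments by specification, one builds invariant measures with $\int f\,\mathrm{d}\mu$ arbitrarily small yet entropy bounded below by a fixed positive constant; feeding these into the variational principle shows that $P(-tf)$ remains bounded below by a positive number for all $t\ge0$, so the equation $P(-hf)=0$ has no solution and $h_f=\infty$, contradicting $(3)$. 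Finally $(2)\Rightarrow(1)$ is a positive Liv\v{s}ic theorem: passing to the symbolic model one constructs a calibrated subaction, i.e. a H\"older function $u$, differentiable along the flow, with $f-\dot u\ge m(f)$ pointwise; then $f$ is cohomologous to the H\"older function $g:=f-\dot u\ge m(f)>0$, which is $(1)$.

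I expect the main obstacle to be the cohomological step $(2)\Rightarrow(1)$: producing the transfer function with the correct regularity — H\"older and differentiable in the flow direction — on a space carrying only a metric, not smooth, Anosov structure. The natural route is to transport the problem to the H\"older suspension furnished by the symbolic coding of Proposition \ref{translationMetricAnosov}, build a discrete-time calibrated subaction there by the Ma\~n\'e--Conze--Guivarc'h method, and then check that the resulting function descends to a flow-differentiable coboundary on $X$. Controlling this regularity, together with the specification-based construction of positive-entropy near-minimizing measures needed in $(3)\Rightarrow(2)$, is where the genuine work lies; these are precisely the points settled by Ledrappier \cite{Ledrappier} and Sambarino \cite{Sambarino_Quantitative}, to which one may alternatively appeal directly once the symbolic coding is in place.
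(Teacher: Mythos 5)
The paper offers no proof of this lemma: it is quoted verbatim, with attribution to Ledrappier and Sambarino, so there is no in-paper argument to compare yours against. Your cyclic outline $(1)\Rightarrow(2)\Rightarrow(3)\Rightarrow(1)$ is the standard route taken in those references, and the two easy implications are complete as written: $(1)\Rightarrow(2)$ is Liv\v{s}ic plus compactness of $X$, and $(2)\Rightarrow(3)$ follows from the two inclusions you give together with $0<h_{\mathrm{top}}<\infty$. The two substantive implications, however, remain sketches. In $(3)\Rightarrow(2)$, the passage from ``$P(-tf)\geq c>0$ for all $t\geq 0$'' to ``$h_f=\infty$'' quietly invokes a characterization of $h_f$ as the zero of $t\mapsto P(-tf)$; the version recorded in the paper (Lemma \ref{lem:PressueZero}) is stated only for \emph{positive} reparametrization functions, so for an $f$ that merely has non-negative periods you must either first reduce to that case or avoid the variational principle and count the glued orbits produced by specification directly. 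In $(2)\Rightarrow(1)$ you correctly locate the difficulty in producing a transfer function that is H\"older \emph{and} differentiable along the flow on a space carrying only a metric Anosov structure; the Ma\~n\'e--Conze--Guivarc'h subaction on the symbolic suspension is the right tool, but descending it to $X$ with the required flow-regularity is precisely the content of the positive Liv\v{s}ic theorem of Lopes--Thieullen and Ledrappier, and your sketch does not establish it. Since the paper itself delegates the entire lemma to the literature, your closing appeal to Ledrappier and Sambarino puts you in the same position as the authors; read as a self-contained proof, though, the proposal is an outline with the hard analysis still outstanding.
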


We also need the following theorem of Ledrappier \cite{Ledrappier} which establishes the correspondence between cohomologous H{\"o}lder cocycles and cohomologous H{\"o}lder continuous functions on $T^1X_0$. We will state this theorem for vector valued H\"older cocycles. Their definition is analogous to Definition \ref{def:cocycle}.
  
\begin{thm}[{Ledrappier \cite[page 105]{Ledrappier}}]\label{thm:Ledrappier} Let $V$ be a finite dimensional vector space.
For each H{\"o}lder cocycle $c:\Gamma \times \partial\Gamma \to V$, there exists a H{\"o}lder continuous map $F_c: T^1X_0 \to V$, such that for every $\gamma \in \Gamma -\{e\}$
\[\ell_c(\gamma)=\int_{[\gamma]}F_c.\]
This map $c\to F_c$ induces a bijection between the set of cohomology classes of $V$-valued H{\"o}lder cocycles and the set of cohomology classes of H{\"o}lder maps from $T^1X_{0}$ to $V$.
\end{thm}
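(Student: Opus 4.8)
The plan is to realize both the correspondence and the bijection through the Hopf parametrization of the geodesic flow on $T^1X_0$. Recall that $T^1X_0$ is identified with the quotient of $\partial^2\Gamma\times\bb R$ by the $\Gamma$-action $\gamma\cdot(x,y,s)=(\gamma x,\gamma y,s+\beta_0(\gamma,y))$, where $\beta_0$ is the real-valued Busemann cocycle of the base hyperbolic structure $X_0$; the geodesic flow is translation in the $\bb R$-factor, the orbit associated to $[\gamma]$ is the image of $(\gamma^-,\gamma^+)\times\bb R$, and its period is $\beta_0(\gamma,\gamma^+)=\ell_{\rho_0}(\gamma)$. In particular the constant roof function $\mathbbm{1}$ has periods $\int_{[\gamma]}\mathbbm{1}=\ell_{\rho_0}(\gamma)$, so $\beta_0$ corresponds to $\mathbbm{1}$ under the sought bijection. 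I would first construct $F_c$ for a real-valued cocycle, then reduce the vector-valued case to it coordinatewise, and finally identify the map on cohomology classes.

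For existence of $F_c$, suppose first that $c$ has strictly positive periods. Then its translation flow $\varphi_t$ on $M_c$ has positive periods and finite exponential growth rate, so Theorem \ref{thm:SambarinoReparametrization} produces a H\"older conjugacy between $\varphi_t$ and a reparametrization of the geodesic flow on $T^1X_0$ by a positive H\"older function. Since H\"older conjugacy preserves periods, this reparametrization function $F_c$ satisfies $\int_{[\gamma]}F_c=\ell_c(\gamma)$ for all $\gamma$. For a cocycle $c$ of arbitrary sign, I would use that the periods of a H\"older cocycle grow at most linearly in translation length, so $\ell_c(\gamma)/\ell_{\rho_0}(\gamma)$ is uniformly bounded below and there is $R>0$ with $\ell_{c+R\beta_0}(\gamma)=\ell_c(\gamma)+R\,\ell_{\rho_0}(\gamma)>0$ for every $\gamma$. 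Applying the positive case to $c+R\beta_0$ yields a positive H\"older $g$ with $\int_{[\gamma]}g=\ell_c(\gamma)+R\,\ell_{\rho_0}(\gamma)$, and then
\[
F_c:=g-R\,\mathbbm{1}
\]
is a H\"older function with $\int_{[\gamma]}F_c=\ell_c(\gamma)$. For $V$-valued $c$ I fix a basis of $V$ and apply this to each coordinate, assembling the results into a H\"older map $F_c\colon T^1X_0\to V$; the construction is visibly linear in $c$.

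It remains to obtain the bijection on cohomology. By Definition \ref{def:cocycle} two cocycles are cohomologous precisely when they share all periods, and by Liv\v{s}ic's theorem (Remark \ref{rem Liv}) two H\"older maps $T^1X_0\to V$ are cohomologous precisely when they share all periods $\int_{[\gamma]}(\cdot)$. Since $F_c$ and $c$ have the same periods, the assignment $[c]\mapsto[F_c]$ is well defined and injective. For surjectivity I would run the construction backwards: given a H\"older map $F\colon T^1X_0\to V$, lift it to a $\Gamma$-invariant $\widetilde F$ on $\partial^2\Gamma\times\bb R$ and set
\[
c_F(\gamma,y)=\int_{0}^{\beta_0(\gamma^{-1},y)}\widetilde F\big(\varphi_s(x_0,y,0)\big)\,\mathrm ds,
\]
for a fixed auxiliary $x_0\in\partial\Gamma$; additivity of the flow integral together with the cocycle identity for $\beta_0$ and the $\Gamma$-invariance of $\widetilde F$ yield the cocycle identity of Definition \ref{def:cocycle}, while bi-H\"older regularity of the Hopf parametrization gives H\"older continuity in $y$. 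By construction $\ell_{c_F}(\gamma)=\int_{[\gamma]}F$, so $F_{c_F}$ and $F$ share all periods and are cohomologous, proving surjectivity.

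The routine part is the forward construction, which the reparametrization theorem reduces to a realization statement. The delicate point is surjectivity: one must check that the path integral above genuinely defines a \emph{H\"older} cocycle, matching the cocycle identity against the $\Gamma$-equivariance built into the Hopf coordinates and extracting H\"older control in the boundary variable from the bi-H\"older regularity of the Busemann data rather than assuming it. This is exactly the content of Ledrappier's theorem, and in a self-contained account the careful bookkeeping of base points and sign conventions in the two mutually inverse constructions is where the real work lies.
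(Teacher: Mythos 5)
The paper does not prove this statement; it is quoted verbatim from Ledrappier, so your attempt has to be judged on its own merits, and there are two genuine problems with it. First, the existence half rests entirely on Theorem \ref{thm:SambarinoReparametrization}, but in the literature Sambarino's reparametrization theorem is itself \emph{proved} by first invoking Ledrappier's correspondence (one produces $F_c$ on $T^1X_0$ with periods $\ell_c$, upgrades it to a positive function via Lemma \ref{lem:PosEntropy}, and only then builds the conjugacy between $M_c$ and the reparametrized geodesic flow). Deriving Ledrappier's theorem from the reparametrization theorem therefore inverts the actual logical dependency and is circular; it cannot stand as a proof of the statement, even though the reduction itself (adding $R\beta_0$ to make periods positive, using the linear bound $|\ell_c(\gamma)|\lesssim \ell_{\rho_0}(\gamma)$, and working coordinatewise for $V$-valued cocycles) is mechanically sound once the reparametrization theorem is granted as a black box.

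Second, and independently, your surjectivity construction does not work as written. The formula $c_F(\gamma,y)=\int_{0}^{\beta_0(\gamma^{-1},y)}\widetilde F(\varphi_s(x_0,y,0))\,\mathrm ds$ is undefined at $y=x_0$, and more seriously it freezes the first Hopf coordinate at $x_0$ while the $\Gamma$-action moves it: writing $u(y,T)=\int_0^T\widetilde F(x_0,y,-s)\,\mathrm ds$, the cocycle identity would require $u(y,\beta_0(\gamma^{-1},y)+\beta_0(\eta^{-1},\gamma^{-1}y))=u(\eta y,\beta_0(\gamma^{-1},\eta y))+u(y,\beta_0(\eta^{-1},y))$, and the two sides differ by a term measuring the variation of $\widetilde F$ in the unstable ($x$-)direction, which does not vanish for a general H\"older $F$. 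The identity holds only up to a coboundary, and promoting it to an honest cocycle is exactly where Ledrappier's limiting construction along stable leaves (using the exponential contraction of the geodesic flow to make the relevant integrals converge) enters. Flagging this as ``the delicate point'' does not close the gap: the formula you propose fails the very identity you need, and the repaired construction is not a bookkeeping refinement of it but a genuinely different argument.
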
  
  
For a Hitchin representation $\rho$, this tells us that we can find a H{\"o}lder continuous map $g_{\rho}:T^{1}X_0\to \frak a$ with periods equal to the periods of the vector valued Busemann cocycle $B_{\rho}$. 
Since $h_{\phi\circ g_{\rho}}=h_{B^{\phi}_{\rho}}=h^{\phi}(\rho)$ is finite and positive, by Lemma \ref{lem:PosEntropy}, the reparametrization $\phi\circ g_{\rho}$ on $T^1X_0$ is cohomologous to a positive H{\"o}lder continuous function.

Now we are ready to state our lemma about the existence of positive reparametrization functions, which should be compared to Lemma \ref{lem: rep}.

\begin{lem} \label{lem: rep2} Let $\rho_1$and $\rho_2$ be two different representations in $\cal H_{d}(S)$. There exists a positive H{\"o}lder continuous function $f_{\rho_1}^{\rho_2}: M^\phi_{\rho_1} \to \bb R_{>0}$ such that for every periodic orbit $\tau$ corresponding to $[\gamma]\in [\Gamma]$
\[
\lambda\left(f_{\rho_1}^{\rho_2},\tau\right)=\ell_{\rho_2}^{\phi}([\gamma]).
\]
\end{lem}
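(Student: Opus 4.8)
The plan is to assemble the statement from the three main tools already in place: Sambarino's reparametrization theorem (Theorem \ref{thm:SambarinoReparametrization}), Ledrappier's correspondence (Theorem \ref{thm:Ledrappier}), and the positivity criterion (Lemma \ref{lem:PosEntropy}). The idea is to produce a positive H\"older roof function on $T^1X_0$ encoding the $\phi$-lengths of $\rho_2$, and to transport an appropriate quotient of roof functions to $M^\phi_{\rho_1}$ through the H\"older conjugacy supplied by Sambarino's theorem.

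First I would treat the target representation $\rho_2$. As recalled just before the statement, Theorem \ref{thm:Ledrappier} applied to the vector valued Busemann cocycle $B_{\rho_2}$ produces a H\"older map $g_{\rho_2}\colon T^1X_0\to\frak a$ whose periods equal those of $B_{\rho_2}$, so that $\phi\circ g_{\rho_2}\colon T^1X_0\to\bb R$ has period $\ell^\phi_{\rho_2}([\gamma])$ over the orbit corresponding to $[\gamma]$. Since $\phi\in\Delta$ and the Jordan projections $\lambda(\rho_2(\gamma))$ lie in the interior of $\frak a^+$ by Theorem \ref{thm:loxodromic}, these periods are strictly positive, and $h_{\phi\circ g_{\rho_2}}=h^\phi(\rho_2)\in(0,\infty)$. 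Lemma \ref{lem:PosEntropy} then gives a positive H\"older function $\hat f_2\colon T^1X_0\to\bb R_{>0}$ cohomologous to $\phi\circ g_{\rho_2}$, hence with the same periods $\ell^\phi_{\rho_2}([\gamma])$.

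Next I would set up the conjugacy coming from $\rho_1$. Theorem \ref{thm:SambarinoReparametrization}, applied to the cocycle $c=B^\phi_{\rho_1}$, provides a bi-H\"older conjugacy $h\colon T^1X_0\to M^\phi_{\rho_1}$ between a H\"older reparametrization $\psi^{f_1}$ of the geodesic flow $\psi$ on $T^1X_0$ and the translation flow $\varphi_t$ on $M^\phi_{\rho_1}$, where $f_1\colon T^1X_0\to\bb R_{>0}$ is positive and H\"older. Because the translation flow has periods $\ell^\phi_{\rho_1}([\gamma])$ and a conjugacy preserves periods, $\int_\tau f_1=\ell^\phi_{\rho_1}([\gamma])$. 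As $T^1X_0$ is compact and $f_1$ is positive, the quotient $\hat f_2/f_1$ is a well-defined positive H\"older function, and I would define
\[
f^{\rho_2}_{\rho_1}=\Big(\tfrac{\hat f_2}{f_1}\Big)\circ h^{-1}\colon M^\phi_{\rho_1}\to\bb R_{>0},
\]
which is positive and H\"older since $h^{-1}$ is bi-H\"older.

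Finally I would verify the period identity. For a $\psi^{f_1}$-periodic orbit $\tau$ of period $p$ with corresponding $\varphi_t$-orbit $h(\tau)$, the intertwining $h\circ\psi^{f_1}_t=\varphi_t\circ h$ gives $f^{\rho_2}_{\rho_1}\circ h=\hat f_2/f_1$, so the reparametrized period of $h(\tau)$ under $\varphi^{f^{\rho_2}_{\rho_1}}$ equals $\int_0^p (f^{\rho_2}_{\rho_1}\circ h)(\psi^{f_1}_s x)\,\mathrm ds=\lambda(\hat f_2/f_1,\tau)$ computed along $\psi^{f_1}$. By Remark \ref{rem:ReserveRepa}(2), reparametrizing $\psi^{f_1}$ by $\hat f_2/f_1$ coincides with reparametrizing $\psi$ by $(\hat f_2/f_1)\cdot f_1=\hat f_2$, whence this period equals $\int_\tau\hat f_2=\ell^\phi_{\rho_2}([\gamma])$. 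Therefore $\lambda(f^{\rho_2}_{\rho_1},\tau)=\ell^\phi_{\rho_2}([\gamma])$, as desired. The only genuinely delicate point is this last bookkeeping, namely checking that composing a reparametrization with a conjugacy transforms periods correctly; positivity and H\"older regularity are routine consequences of compactness together with the cited theorems.
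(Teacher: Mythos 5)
Your proposal is correct and follows essentially the same route as the paper: both rest on Sambarino's reparametrization theorem, Ledrappier's correspondence together with Lemma \ref{lem:PosEntropy} to arrange positivity, and the composition rule of Remark \ref{rem:ReserveRepa} to identify the periods. The only (cosmetic) difference is that you fix a positive representative $\hat f_2$ on $T^1X_0$ before transporting the quotient $\hat f_2/f_1$ through the conjugacy, whereas the paper transports first and then adjusts within the cohomology class on $M^\phi_{\rho_1}$; your explicit period verification fills in what the paper leaves as ``one easily checks.''
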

\begin{proof}  We denote the translation flow for $M^\phi_{\rho_i}$ with respect to the Busemann cocycle $B_{\rho_i}^{\phi}$ as $\varphi^{i}_t$ for $i=1,2$. Now since $B^{\phi}_{\rho_i}$ has positive entropy, by Theorem \ref{thm:SambarinoReparametrization}, both $\varphi^{i}_t$ are H{\"o}lder conjugate to H{\"o}lder reparametrizations of the geodesic flow on $T^{1}X_0$, where $X_0$ is an auxiliary hyperbolic surface. The reparametrization functions for $\varphi^{i}_t$ on $T^1 X_0$ can be chosen to be positive by the discussion after Theorem \ref{thm:Ledrappier}. Generalizing Remark \ref{rem:ReserveRepa}, we conclude that the flow $\varphi^{2}_t$ is H{\"o}lder conjugate to a H{\"o}lder reparametrization of $\varphi^{1}_{t}$ on $M^\phi_{\rho_1}$. Therefore, there exists a H{\"o}lder function $f\colon M^\phi_{\rho_1}\to \mathbb{R}$ such that the flow $\varphi^{2}_t$ is H{\"o}lder conjugate to $\big(\varphi^{1}_t\big)^f$ where the flow $\big(\varphi^{1}_t\big)^f$ is a reparametrization of $\varphi^{1}_t$ by $f$. Because $h_f=h_{B^{\phi}_{\rho}}$ is positive and finite, we can always choose $f=f_{\rho_1}^{\rho_2}$ in its cohomology class to be a positive function by Lemma \ref{lem:PosEntropy}. One easily checks that $\lambda\left(f,\tau\right)=\ell_{\rho_2}^{\phi}([\gamma])$.
\end{proof}

Once we have defined reparametrization functions, we see that all definitions, remarks and results in Section \ref{ssec:thermo} regarding thermodynamic formalism readily generalize to this context by simply changing the domain from $T^1X_{\rho}$ to $M_{\rho}^\phi$.\

\subsection{Correlation and Manhattan curve theorem revisited}

First, we state the independence lemma for Hitchin representations which generalizes Lemma \ref{thm:indep}.

\begin{lem}[Independence lemma]\label{prop:independenceGeneral} Consider $\phi\in\Delta$ and Hitchin representations $\rho_1,\rho_2\in\cal H_d(S)$ such that $\rho_2\neq \rho_1$ or $\rho_1^\ast$. If there exist $a_1,a_2\in\bb R$ such that $a_1\ell^\phi_{\rho_1}([\gamma])+a_2\ell^\phi_{\rho_2}([\gamma])\in\bb Z$ for all $[\gamma]\in[\Gamma]$, then $a_1=a_2=0$.
\end{lem}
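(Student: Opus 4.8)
The plan is to follow the proof of the Independence Lemma \ref{thm:indep} for convex projective structures, replacing the inputs specific to divisible convex sets with their Hitchin counterparts. Arguing by contradiction, suppose $(a_1,a_2)\neq(0,0)$. I would form the product representation $\eta=\rho_1\times\rho_2\colon\Gamma\to \rm G_1\times\rm G_2$, where $\rm G_i$ denotes the Zariski closure of $\rho_i(\Gamma)$, and let $\rm H\subseteq\rm G_1\times\rm G_2$ be the Zariski closure of $\eta(\Gamma)$. The first key input is that each $\rm G_i$ is a connected simple Lie group; this is the analogue for Hitchin representations of the Benoist classification quoted in Lemma \ref{thm:indep}, and follows from the classification of Zariski closures of Hitchin representations. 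Consequently $\rm G_1\times\rm G_2$ is semisimple. Writing $\frak a_{\rm G_i}\hookrightarrow\frak a$ for the inclusion of Cartan subspaces, I would consider the linear functional $\Phi(\vec x,\vec y)=a_1\phi(\vec x)+a_2\phi(\vec y)$ on $\frak a_{\rm G_1}\times\frak a_{\rm G_2}$, which satisfies $\Phi(\lambda(\eta(\gamma)))=a_1\ell^\phi_{\rho_1}([\gamma])+a_2\ell^\phi_{\rho_2}([\gamma])$. Since $\ell^\phi_{\rho_i}>0$ on all of $[\Gamma]$ by Theorem \ref{thm:loxodromic}, the restriction $\phi|_{\frak a_{\rm G_i}}$ is nonzero, so $\Phi\neq 0$ whenever $(a_1,a_2)\neq(0,0)$.

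The argument then proceeds in two steps exactly as in Lemma \ref{thm:indep}. First, I would rule out $\rm H=\rm G_1\times\rm G_2$: if $\eta(\Gamma)$ were Zariski dense in the semisimple group $\rm G_1\times\rm G_2$, then the Proposition on page 2 of \cite{Benoist_propasy} forces the subgroup of $\frak a_{\rm G_1}\times\frak a_{\rm G_2}$ generated by the Jordan projections $\lambda(\eta(\Gamma))$ to be dense; but $\Phi$ is a nonzero continuous functional taking integer values on this dense subgroup (by additivity), so its image would be a non-discrete subgroup of $\bb R$ contained in $\bb Z$, a contradiction. Hence $\rm H\subsetneq\rm G_1\times\rm G_2$. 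Second, Goursat's lemma \cite[Thm 5.5.1]{Hall} applied to the surjective projections $\pi_i\colon\rm H\to\rm G_i$ produces normal subgroups $\rm N_i\trianglelefteq\rm G_i$ with $\rm G_1/\rm N_1\cong\rm G_2/\rm N_2$, and simplicity gives $\rm N_i\in\{\{e\},\rm G_i\}$. If $\rm N_1=\rm G_1$ then $\rm N_2=\rm G_2$ and $\rm H=\rm G_1\times\rm G_2$, contradicting the first step; otherwise $\rm N_1=\rm N_2=\{e\}$, so $\rm H$ is the graph of an isomorphism $\iota\colon\rm G_1\to\rm G_2$ and $\rho_2=\iota\circ\rho_1$.

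To conclude I would analyze $\iota$. It induces an isomorphism of the associated Lie algebras, and the classification of their outer automorphisms in \cite{LieClassification}, together with \cite[Theorem 11.9]{PressureMetric-MainPaper} and the dynamics-preserving property of the Frenet curves, shows that $\rho_2$ must be conjugate inside $\rm{PSL}(d,\bb R)$ to $\rho_1$ or to its contragredient $\rho_1^\ast$, contradicting the hypothesis $\rho_2\neq\rho_1,\rho_1^\ast$. I expect the main obstacle to be precisely this first key input together with the final step: whereas Lemma \ref{thm:indep} could quote Benoist's explicit dichotomy ($\rm{PSL}(3,\bb R)$ or $\rm{PSO}(1,2)$), here one must invoke the full classification of Zariski closures of Hitchin representations to ensure each $\rm G_i$ is connected and simple, and one must verify that the abstract isomorphism $\iota$ between possibly proper Zariski closures (such as $\rm{PSp}(2m,\bb R)$, $\rm{PSO}(m,m+1)$, or $\rm G_2$) is realized within $\rm{PSL}(d,\bb R)$ by conjugation or by the contragredient involution; this is where the rigidity statement \cite[Theorem 11.9]{PressureMetric-MainPaper} is essential.
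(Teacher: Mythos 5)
Your proposal is correct and follows essentially the same route as the paper: the authors' entire proof consists of citing Guichard's result (quoted as \cite[Theorem 11.7]{PressureMetric-MainPaper}) that the Zariski closure of a Hitchin representation is connected and simple, and then repeating verbatim the Goursat/Benoist-density argument of Lemma \ref{thm:indep}, concluding via the outer automorphism classification and \cite[Theorem 11.9]{PressureMetric-MainPaper} exactly as you do. The points you flag as potential obstacles (nonvanishing of $\phi$ on the smaller Cartan subspace, and realizing the abstract isomorphism $\iota$ by conjugation or the contragredient) are handled by the same citations the paper uses, so there is no gap.
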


\begin{proof} The Zariski closure $\rm G_i$ of $\rho_i(\Gamma)$ is simple and connected by a result of Guichard (see \cite[Theorem 11.7]{PressureMetric-MainPaper}). Then, we argue by contradiction as in the proof of Lemma \ref{thm:indep}.
\end{proof}

Recall that a flow is weakly mixing if its periods do not generate a discrete subgroup of $\mathbb{R}$. In particular, the independence lemma implies that the translation flow $\varphi_t$ is weakly mixing.

We are now ready to state our correlation theorem for Hitchin representations. Recall that we denote the \emph{renormalized $\phi$ length spectrum} of $\rho$ by $L^{\phi}_\rho=h^{\phi}(\rho)\ell^\phi_\rho$,

\medskip\noindent
{\textbf{Theorem} \ref{thm:HitchinMain}\textbf{.}} 
{\em Given a linear functional $\phi\in\Delta$ and a fixed precision $\eee>0$, for any two different Hitchin representations $\rho_1, \rho_2\colon\Gamma \to \rm{PSL}(d,\mathbb{R})$  such that $\rho_2\neq \rho_1^*$, there exist constants $C=C(\eee, \rho_1,\rho_2,\phi)>0$ and $M=M(\rho_1,\rho_2, \phi) \in (0,1)$ such that }
\[
\# \Big\{[\gamma]\in[\Gamma]\, \Big\vert\, L_{\rho_1}^{\phi}([\gamma]) \in \big(x,x+h^{\phi}(\rho_1)\eee\big),\  L_{\rho_2}^{\phi}([\gamma]) \in \big(x, x+h^{\phi}(\rho_2)\eee\big)\Big\} \sim C \frac{e^{Mx}}{x^{3/2}}.
\]
\begin{proof} 
By Proposition \ref{translationMetricAnosov} and Lemma \ref{prop:independenceGeneral}, the translation flow associated to the linear functional $\phi$ is a weakly mixing metric Anosov flow on $M^\phi_{\rho_1}$ that admits a symbolic coding with H\"older continuous roof function. Moreover, $\rho_1$ and $\rho_2$ are independent thanks to the independence lemma \ref{prop:independenceGeneral}. Thus we can apply Lalley and Sharp's Theorem \ref{prop:lalley} which hold in the context of flows with a symbolic coding with H\"older roof function.

Recall $J(f_{\rho_1}^{\rho_2})$ is the open interval of values $P'(tf_{\rho_1}^{\rho_2})$ for $t\in\mathbb R$. We then want to verify $\frac{h^{\phi}(\rho_1)}{h^{\phi}(\rho_{2})}\in J(f_{\rho_1}^{\rho_2})$. The proof of this fact follows from the same argument as in the proof for Theorem \ref{thm:main}.
\end{proof}
\begin{rem}
Fix $\phi\in\Delta$. Sambarino's orbit counting theorem  \cite[Thm 7.8]{Sambarino_Quantitative} implies that the correlation number $M=M(\rho_1,\rho_2, \phi)$ converges to one and $C(\eee, \rho_1,\rho_2,\phi)$ diverges if $\rho_1$ and $\rho_2$ converges to a Hitchin representation with the same $\phi$-length spectrum.
\end{rem}

For two Hitchin representations $\rho_1$, $\rho_2$ with $\rho_2\neq \rho_1,\rho_1^*$ and $\phi\in\Delta$ consider the {\em Manhattan curve}
\[
\cal C^\phi(\rho_1,\rho_2)=\{(a,b)\in\bb R^2\colon P(-a-bf)=0\}
\]
where $f$ is the reparametrization function from Lemma \ref{lem: rep2}. We obtain a characterization of the correlation number analogous to Theorem \ref{thm:Manhattan}.

\begin{thm} Fix $\phi\in\Delta$, and let $\rho_1$ and $\rho_2$ be Hitchin representations in $\cal H_d(S)$ such that $\rho_2 \neq \rho_1, \rho_1^{*}$. Their correlation number can be written as
\[
M(\rho_1,\rho_2, \phi)=\frac{a}{h^\phi(\rho_1)}+\frac{b}{h^\phi(\rho_2)}
\]
where $(a,b)\in \cal C^\phi(\rho_1,\rho_2)$ is the point on the Manhattan curve at which the tangent line is parallel to the line passing through $(h^\phi(\rho_1),0)$ and $(0,h^\phi(\rho_2))$.
\end{thm}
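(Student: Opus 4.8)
The plan is to transcribe the proof of Theorem \ref{thm:Manhattan} essentially verbatim, relying on the fact that the entire thermodynamic formalism of Section \ref{ssec:thermo} — the variational principle, the derivative formula (\ref{eq:deriv pressure}), real analyticity of the pressure, and the characterization of equilibrium states — transfers without change to the metric Anosov translation flow $\varphi_t$ on $M^\phi_{\rho_1}$, as recorded after Lemma \ref{lem: rep2} and guaranteed by Proposition \ref{translationMetricAnosov}. Throughout I write $f=f_{\rho_1}^{\rho_2}$ for the positive reparametrization function of Lemma \ref{lem: rep2}, whose periods are the $\ell^\phi_{\rho_2}([\gamma])$, and $P$ for the pressure with respect to $\varphi_t$.

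First I would extract from the proof of Theorem \ref{thm:HitchinMain} the identity $h^\phi(\rho_1)\,M(\rho_1,\rho_2,\phi)=h(\mu_{a_0})$, where $a_0=h^\phi(\rho_1)/h^\phi(\rho_2)$ is realized as $a_0=\int f\,\mathrm{d}\mu_{a_0}$ by the equilibrium state $\mu_{a_0}=\mu_{t_{a_0}f}$. The definition of equilibrium state together with the variational principle gives
\[
h(\mu_{a_0})=P(t_{a_0}f)-t_{a_0}\!\int f\,\mathrm{d}\mu_{a_0}=P(t_{a_0}f)-t_{a_0}\,\frac{h^\phi(\rho_1)}{h^\phi(\rho_2)}.
\]
Next I would describe $\cal C^\phi(\rho_1,\rho_2)$ as the graph of a real analytic function $q$ defined implicitly by $P(-q(s)f)=s$, and set $(a,b)=(s_0,q(s_0))$ with $q(s_0)=-t_{a_0}$, so that $s_0=P(-q(s_0)f)=P(t_{a_0}f)$. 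Substituting into the displayed identity and dividing by $h^\phi(\rho_1)$ yields
\[
M(\rho_1,\rho_2,\phi)=\frac{h(\mu_{a_0})}{h^\phi(\rho_1)}=\frac{s_0}{h^\phi(\rho_1)}+\frac{q(s_0)}{h^\phi(\rho_2)}=\frac{a}{h^\phi(\rho_1)}+\frac{b}{h^\phi(\rho_2)},
\]
which is the claimed formula.

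It then remains to verify the geometric characterization of $(a,b)$. Differentiating the defining relation $P(-q(s)f)=s$ and using (\ref{eq:deriv pressure}) gives $1=-q'(s)\int f\,\mathrm{d}\mu_{-q(s)f}$. Evaluating at $s=s_0$, where $\mu_{-q(s_0)f}=\mu_{a_0}$ and $\int f\,\mathrm{d}\mu_{a_0}=h^\phi(\rho_1)/h^\phi(\rho_2)$, produces $q'(s_0)=-h^\phi(\rho_2)/h^\phi(\rho_1)$, which is exactly the slope of the line through $(h^\phi(\rho_1),0)$ and $(0,h^\phi(\rho_2))$; hence the tangent to $\cal C^\phi(\rho_1,\rho_2)$ at $(a,b)$ is parallel to that line, and strict convexity of $q$ (from Lemma \ref{prop:independenceGeneral} and the analogue of Proposition \ref{thm:Jrigidity}) makes $(a,b)$ unique.

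Because this is a routine transcription of Theorem \ref{thm:Manhattan}, I expect no genuine obstacle in the computation itself. The only point requiring care is confirming that every input invoked — real analyticity and strict convexity of $s\mapsto q(s)$, the derivative formula (\ref{eq:deriv pressure}), and the correspondence between tangent slopes and integrals against equilibrium states — is available for the \emph{metric Anosov} flow $\varphi_t$ rather than for a genuine Anosov flow. This is precisely what is furnished by Proposition \ref{translationMetricAnosov} and the stated transfer of Section \ref{ssec:thermo} to metric Anosov flows admitting H\"older symbolic codings, so the argument goes through unchanged.
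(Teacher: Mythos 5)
Your proposal is correct and follows exactly the route the paper intends: the paper states this theorem without a separate proof, presenting it as the direct analogue of Theorem \ref{thm:Manhattan}, and your transcription — the identity $h^\phi(\rho_1)M=h(\mu_{a_0})$ from the proof of Theorem \ref{thm:HitchinMain}, the implicit description $P(-q(s)f)=s$ of the Manhattan curve, and the differentiation giving the tangent slope $-h^\phi(\rho_2)/h^\phi(\rho_1)$ — is precisely that argument. Your added care in checking that the thermodynamic formalism transfers to the metric Anosov translation flow via Proposition \ref{translationMetricAnosov} is exactly the point the paper relies on when it asserts that Section \ref{ssec:thermo} generalizes to $M^\phi_\rho$.
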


We conclude by raising two questions which are motivated by Theorem \ref{thm:pinching} and Theorem \ref{thm:cubicrays}.

Recall that Potrie and Sambarino \cite[Thm B]{PotrieSambarino} showed that for every Hitchin representation $\rho$, the simple root lengths are such that $h^{\alpha_i}(\rho)=1$ for $i=1,\dots,d-1$. Moreover, the simple root lengths restrict to the hyperbolic length on the Fuchsian locus $\cal T(S)\subset \cal H_d(S)$. Thus, Theorem \ref{thm:HitchinMain} in this case is a particularly natural generalization of the correlation theorem for hyperbolic surfaces \cite{CorrelationHyperbolic}. Theorem \ref{thm:pinching} exhibits examples of sequences for which the $\alpha_i$-correlation number decays. We ask whether there exist similar examples which lie outside the Fuchsian locus.
\begin{que} \label{Que:SimpleRootLengths}
For $d\geq 3$ and $i=1,\cdots,d-1$, do there exist sequences $(\rho_n)_{n=1}^\infty$ and $(\eta_n)_{n=1}^\infty$ in $\cal H_d(S)$  which leave every compact neighborhood of the Fuchsian locus and such that the correlation numbers satisfy $\lim\limits_{n\to\infty} M(\rho_n,\eta_n,\alpha_i)=0$?
\end{que}

Finally, we raise a conjecture motivated by Theorem \ref{thm:cubicrays}. Similarly to cubic rays introduced in section \ref{sec:applications}, in general for the Hitchin component $\cal H_d(S)$, one can consider a $d$-th order holomorphic differential $q$ over $X_0$ and its associated family of representations $(\rho_t)_{\scriptscriptstyle t\in\bb R}$ in $\cal H_d(S)$ parametrized by $(tq)_{t\in\mathbb{R}}$. These representations are given by holonomies of {\em cyclic Higgs bundles}. We refer to \cite{Baraglia_cyclic} for a detailed definition and discussion. When $t\geq 0$, we say this family of representations $(\rho_t)_{t\geq 0}$ is a \emph{ray associated to $q$}.  

\begin{conj}
\label{Conj:cyclic}  Let $(\rho_t)_{t\geq 0}$ and $(\eta_t)_{t\geq 0}$ be two rays associated to two different $d$-th order holomorphic differentials $q_1$ and $q_2$ on a hyperbolic structure $X_0$ such that $q_1,q_2$ have unit $L^2$-norm with respect to $X_0$ and $q_1\neq -q_2$. Then the correlation number $M(\rho_{t},\eta_{t}, \sum_{i=1}^{d-1}\alpha_i)$ is uniformly bounded away from zero as $t$ goes to infinity.
\end{conj}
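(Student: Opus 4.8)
The plan is to follow the blueprint of the proof of Theorem \ref{thm:cubicrays}, now inside the metric Anosov framework of Section \ref{sec:Hitchin}, with the Hilbert length replaced by the $\phi$-length for $\phi=\sum_{i=1}^{d-1}\alpha_i$ (so that $\phi(\vec x)=x_1-x_d$). Since Section \ref{sec:Hitchin} already supplies the correlation theorem, the Manhattan curve characterization of the correlation number, and the pressure-theoretic statement underlying Lemma \ref{lem:ManhattanEntropy} for every $\phi\in\Delta$, the entire argument of Theorem \ref{thm:cubicrays} transcribes verbatim once a single input is available: a cyclic-ray analog of Lemma \ref{lem:tholo}. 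The role of geodesic currents in Lemma \ref{lem:ManhattanEntropy} is only to provide the combined length function, so in this setting I would phrase $h(u_{t,s})$ directly as the exponential growth rate of $[\gamma]\mapsto s\,L^\phi_{\rho_t}([\gamma])+(1-s)\,L^\phi_{\eta_t}([\gamma])$.

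First I would establish the key estimate: for a ray $(\rho_t)_{t\geq 0}$ associated to a $d$-th order holomorphic differential $q$, there exist $D>1$ and $t_0\geq 0$ such that $\tfrac{1}{D}L^\phi_0\leq L^\phi_t\leq D\,L^\phi_0$ for $t\geq t_0$, where $L^\phi_t=h^\phi(\rho_t)\,\ell^\phi_{\rho_t}$. As in Lemma \ref{lem:tholo}, this splits into two ingredients: bi-Lipschitz bounds of the form $\tfrac{1}{B}t^{1/d}\ell^\phi_0\leq \ell^\phi_t\leq B\,t^{1/d}\ell^\phi_0$, and the matching entropy asymptotics $\tfrac{1}{Bt^{1/d}}\leq h^\phi(\rho_t)\leq \tfrac{B}{t^{1/d}}$; the latter is a formal consequence of the former by the orbit-counting argument already used in the proof of Lemma \ref{lem:tholo}. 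The exponent $1/d$ reflects the scaling symmetry of cyclic Higgs bundles: rescaling $q\mapsto tq$ rescales the associated equivariant harmonic map so that translation lengths grow like $t^{1/d}$. This length estimate is the cyclic-Higgs-bundle generalization of Tholozan's theorem \cite[Thm 3.9]{Tho_entropy}, and I expect it to follow from domination and metric-comparison results for cyclic Higgs bundles, i.e. maximum-principle estimates comparing the harmonic metric of $tq$ with the hyperbolic metric of $X_0$.

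With this estimate in hand, the conclusion is immediate. Since $\rho_0=\eta_0$ one has $L^\phi_{\rho_0}=L^\phi_{\eta_0}=L^\phi_0$, and writing $D_1,D_2$ for the constants attached to the two rays, for every $s\in[0,1]$ and $[\gamma]\in[\Gamma]$ the combined renormalized length satisfies
\[
s\,L^\phi_{\rho_t}([\gamma])+(1-s)\,L^\phi_{\eta_t}([\gamma])\leq \max\{D_1,D_2\}\,L^\phi_0([\gamma]),
\]
so that $h(u_{t,s})\geq \tfrac{1}{D}$ with $D=\max\{D_1,D_2\}$, uniformly in $t$ and $s$. Applying the $\phi$-version of Lemma \ref{lem:ManhattanEntropy}, which rests only on the strict convexity of the Manhattan curve $\cal C^\phi(\rho_t,\eta_t)$ and Lemma \ref{lem:PressueZero}, there exists $s_0\in(0,1)$ with $M(\rho_t,\eta_t,\phi)=h(u_{t,s_0})\geq \tfrac{1}{D}$. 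The hypothesis $q_1\neq -q_2$ enters exactly to guarantee $\rho_t^\ast\neq\eta_t$ for $t>0$, so that $\cal C^\phi(\rho_t,\eta_t)$ is strictly convex; here I would need the cyclic analog of the $d=3$ fact cited from Loftin, namely that the contragredient involution acts on the top-order differential by $q\mapsto -q$ (suitably normalized), so $q_1\neq -q_2$ forces $\rho_t^\ast\neq\eta_t$.

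The main obstacle is precisely the length estimate of the second paragraph. Tholozan's theorem is special to $d=3$ convex projective geometry, where it rests on Wang's equation and the affine sphere construction; its extension to arbitrary cyclic Higgs bundles, with \emph{uniform, non-asymptotic} bi-Lipschitz constants for the $\phi$-length, is not currently available in the literature. This is the reason the statement is posed as a conjecture rather than a theorem: everything downstream is a routine transcription of Theorem \ref{thm:cubicrays} into the metric Anosov framework of Section \ref{sec:Hitchin}, whereas the required generalization of Tholozan's estimate appears to demand genuinely new analysis of the harmonic metrics along cyclic rays.
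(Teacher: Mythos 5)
The statement you are asked to prove is posed in the paper as a \emph{conjecture}, and the paper offers no proof of it; your write-up correctly diagnoses why. Your reduction is exactly the one the authors have in mind: it transcribes the proof of Theorem \ref{thm:cubicrays} into the metric Anosov framework of Section \ref{sec:Hitchin}, replacing the geodesic-current formulation of Lemma \ref{lem:ManhattanEntropy} by the exponential growth rate of the combined length $s\,L^\phi_{\rho_t}+(1-s)\,L^\phi_{\eta_t}$, and it isolates the single missing input: a uniform bi-Lipschitz comparison $\tfrac1D L^\phi_0\leq L^\phi_t\leq D L^\phi_0$ along a cyclic ray, i.e.\ the higher-rank analog of Tholozan's estimate underlying Lemma \ref{lem:tholo}. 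That estimate is not established anywhere in the paper or, as you note, in the literature, so what you have is a conditional argument, not a proof. The gap is genuine and it is the whole content of the conjecture: Tholozan's theorem rests on the affine sphere construction and Wang's equation, which are specific to $d=3$, and the corresponding uniform (not merely asymptotic in $[\gamma]$ for fixed $t$) control of the harmonic metric along general cyclic rays is the new analysis that would be required.

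Two smaller points deserve flagging. First, the claim that the contragredient involution acts on the top-order differential by $q\mapsto -q$, so that $q_1\neq -q_2$ forces $\rho_t^\ast\neq\eta_t$, is cited in the paper only for $d=3$ (via Loftin); in higher rank it is expected from the structure of cyclic Higgs bundles but would need a reference or an argument, since without it the strict convexity of $\cal C^\phi(\rho_t,\eta_t)$, and hence the $\phi$-version of Lemma \ref{lem:ManhattanEntropy}, is not available. Second, your entropy asymptotics $h^\phi(\rho_t)\asymp t^{-1/d}$ do follow formally from the length bounds by the counting argument of Lemma \ref{lem:tholo}, but note that on the Fuchsian locus $h^\phi(\rho_0)=1/(d-1)$ for $\phi=\sum_i\alpha_i$ rather than $1$; this only changes constants and does not affect the scheme. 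In summary: your architecture matches the intended one and correctly locates the obstruction, but the statement remains unproven because the key length estimate is assumed rather than established.
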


\subsection*{Acknowledgements} We would like to thank Harrison Bray, Richard Canary, Le{\'o}n Carvajales and Michael Wolf for several insightful conversations and suggestions. We are very grateful to Richard Canary and an anonymous referee for their comments on early versions of this manuscript.
GM acknowledges partial support by the American Mathematical Society and the Simons Foundation. XD wants to thank Rice math department for providing support during the preparation of this paper.

\bibliography{mybib}
\bibliographystyle{abbrv}
\end{document}